\newtheorem{theorem}{Theorem}
\newtheorem{lemma}{Lemma}
\newtheorem{proposition}{Proposition}
\newtheorem{remark}{Remark}
\newtheorem{definition}{Definition}
\theoremstyle{definition}
\newcommand{\sym}{\mathrm{sym}}
\newcommand{\dist}{\operatorname{dist}}
\newcommand{\supp}{\operatorname{supp}}
\newcommand{\e}{\varepsilon}
\newcommand{\Ihd}{I_{h,\Delta}}
\newcommand{\gd}{g_{\Delta}}
\newcommand{\ydel}{y^\Delta}
\newcommand{\Ehd}{E_{h,\Delta}}
\newcommand{\kvk}{\kappa^{\mathrm{vK}}}
\newcommand{\Ohd}{B_{1,\Delta}\times[-h/2,h/2]}
\newcommand{\ohd}{\omega_{h,\Delta}}
\newcommand{\Ahd}{\mathcal{A}_{h,\Delta}}
\newcommand{\Jhd}{J_{h,\Delta}}
\newcommand{\Jhdt}{\tilde J_{h,\Delta}}
\newcommand{\Q}{\mathcal Q}
\newcommand{\R}{\mathbb{R}}
\newcommand{\Z}{\mathbb{Z}}
\newcommand{\N}{\mathbb{N}}
\renewcommand{\d}{\mathrm{d}}
\renewcommand{\L}{\mathcal{L}}
\newcommand{\A}{\mathcal{A}}
\newcommand{\id}{\mathrm{Id}}
\newcommand\sgn{\mathrm{sgn}}
\renewcommand{\H}{\mathcal{H}}
\newcommand{\ve}{\epsilon}
\newcommand{\vk}{I^{\mathrm{vK}}}
\newcommand{\FvK}{F\"oppl-von K\'arm\'an }
\title{Energy scaling law for a single disclination in a thin elastic sheet}
\date{\today}
\author[H. Olbermann] {Heiner Olbermann}
\date{\today}
\address[Heiner Olbermann]{Hausdorff Center for Mathematics, Bonn, Germany}
\email{heiner.olbermann@hcm.uni-bonn.de}
\begin{document}

\maketitle
\begin{abstract}
We consider a single disclination in a thin elastic sheet of thickness $h$. We prove  ansatz-free
lower bounds for the free elastic energy in three different settings: First, for
a geometrically fully non-linear plate model, second, for three-dimensional
nonlinear elasticity, and third, for the \FvK plate theory. The
lower bounds in the first and third result are optimal in the sense that we
find upper bounds that are  identical to the respective lower bounds in the
leading order of $h$.
\end{abstract}

\section{Introduction}
\subsection{Setup and previous work}
Consider the following setup: Take a thin elastic sheet  in
the shape of a disc, and remove a sector from it. Then  glue the edges of the
cut back together. The (stress free) reference metric of the sheet is now that of
a singular cone. In other words, the reference metric is flat away from the
centre, where the Gauss curvature has a $\delta$-type singularity. In the
literature, such a  defect of the reference metric at the origin  
is often called a  ``disclination''. We are
interested in upper and lower bounds for the free elastic energy of the sheet
with a single disclination.
% The free elastic energy consists of  two terms: the non-convex ``membrane energy'', that penalizes deformations of
%  the sheet whose induced metric is far away from the reference metric, and the
% ``bending energy'', that penalizes  large second gradients. The latter is to be
% viewed as a singular perturbation, since it contains as a factor a small parameter
% $h^2$, where $h>0$ can be thought of as the thickness of the sheet.
Note that we we do
not impose  boundary conditions or other constraints.\\
\\
 Disclinations play an
important role, e.g., in the modeling of cell walls
\cite{PhysRevA.38.1005,lidmar2003virus}, where they can serve as the vertices in
topologically spherical shells, that take on the shape of a (smoothed) icosahedron. Such a
behaviour has been observed, e.g., for virus capsids
\cite{caspar1962physical}. Recent interest in disclinations in thin sheets comes from   the
analysis of carbon
nanofilms \cite{MR2033874} and the modeling of biological growth processes \cite{MR2500622}.
An extensive review of the role of disclinations in
the mechanics of solids, not limited to the case of thin sheets, can be found in
\cite{MR2033874}.\\
\\
Considering a single  disclination of a comparable deficit
without boundary conditions, one expects that
 energy will be minimized by a shape that looks like a cone away
from the disclination, and deviates from the cone on a ball around the
disclination whose size is comparable to the thickness of the sheet.
In the literature, energy estimates for a
single disclination 
have so far been ansatz based (as in \cite{PhysRevA.38.1005,lidmar2003virus,MR3179439}),
or it has been assumed that the deformation is radially symmetric. In the latter
case, the dimensionality of the problem is reduced from two to one, and ODE
methods can be used. It has been shown in \cite{MR3148079} that under this
assumption, in the \FvK approximation,   minimizers do indeed show the behavior
described above. Additionally, this paper contains a
quantitative estimate on how fast  minimizers approach the singular cone as
the distance $r$ from the apex of the cone increases.\\
\\
In the recent work \cite{HOregular}, we suggested an ansatz how to prove lower
bounds for the free elastic energy of a single disclination without the
assumption of radial symmetry. We viewed the elastic sheet in the
deformed configuration as an immersed Riemannian manifold, and focused our
analysis on intrinsically defined objects, such as the metric and Gauss
curvature of the manifold. 
The main idea brought forward in this work was that estimates for
integrals of Gauss curvature can be obtained by interpolation between the metric
and the Gauss curvature. Lower bounds for integrals of Gauss curvature can be
translated to lower bounds on the bending energy by the isoperimetric inequality
on the sphere.
However, in \cite{HOregular}, we had to modify the elastic energy and make
simplifying assumptions on the shape of the sheet  to put this ansatz to
work.\\ 
\\
Here, we report on an improvement of the method of proof, that makes the
simplifying assumptions unnecessary altogether.  One further benefit of our
modified approach is that it makes
the use of the jargon of Riemannian geometry superfluous.

\subsection{Statement of results}
\label{sec:statement-results}
Our first result regards a single disclination in a geometrically fully
nonlinear plate model.
Let $B_1:=\{x\in\R^2:|x|<1\}$ be the sheet in the reference configuration. 
The singular cone may be described by the mapping $\ydel:B_1\to\R^3$,
\[
  \ydel(x)=\sqrt{1-\Delta^2} x+\Delta |x|e_3\,.
\]
Here, $0<\Delta<1$ is the height of the singular cone, and is determined by the deficit of the disclination at the
origin. The reference metric on $B_1$ is given by
\[
\begin{split}
  \gd(x)= & D\ydel(x)^TD\ydel(x)\\
=& (1-\Delta^2)\hat x\otimes \hat x+\Delta^2\hat x\otimes \hat x\\
=&\id_{2\times 2}-\Delta^2 \hat x^\bot\otimes \hat x^\bot\,,
\end{split}
\]
where $\hat x=x/|x|$ and $\hat x^{\bot}=(-x_2,x_1)/|x|$. The induced metric of a deformation $y\in W^{2,2}(B_1;\R^3)$
is 
\[
g_y=Dy^TDy\,.
\]
The free elastic energy   $\Ihd:W^{2,2}(B_1;\R^3)\to \R$ is defined by
\begin{equation}
\Ihd(y)= \int_{B_1}\left(|g_y-\gd|^2+h^2|D^2y|^2\right)\d \L^2\,,\label{eq:26}
\end{equation}
where $\d \L^2 $ denotes 2-dimensional Lebesgue measure.
The first result of the present paper is
\begin{theorem}
\label{thm:main1}
There exists a constant $C=C(\Delta)$ with the following property:
\[
2\pi\Delta^2h^2\left(|\log h|-\frac32\log|\log h|-C\right)\leq \min_{y\in
  W^{2,2}(B_1;\R^3)}I_{h,\Delta}(y)\leq 2
\pi\Delta^2h^2\left(|\log h|+C\right)
\]
for all small enough $h$.
\end{theorem}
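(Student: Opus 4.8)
\emph{Upper bound.}
I construct a competitor by keeping the singular cone away from the apex and capping it near the apex. Fix a scale $\rho=\rho(h)>0$; set $y=\ydel$ on $B_1\setminus B_\rho$, so that the membrane term $|g_y-\gd|^2$ vanishes there, and on $B_\rho$ let $y$ be a fixed smooth surface patch rescaled to diameter $\rho$ and glued to $\ydel$ across $\partial B_\rho$ so that $y\in W^{2,2}(B_1;\R^3)$ and $|Dy|,|g_y|$ stay bounded on $B_\rho$; concretely one interpolates the traces of $\ydel$ and $D\ydel$ on a thin annulus $B_\rho\setminus B_{\rho/2}$ and fills $B_{\rho/2}$ by a shallow smooth cap. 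A direct computation gives $|D^2\ydel(x)|^2=\Delta^2/|x|^2$, so $h^2\int_{B_1\setminus B_\rho}|D^2\ydel|^2\,\d\L^2=2\pi\Delta^2h^2\log(1/\rho)$, while $\int_{B_\rho}|g_y-\gd|^2\,\d\L^2\le C\rho^2$ and $h^2\int_{B_\rho}|D^2y|^2\,\d\L^2\le Ch^2$. Choosing $\rho=h$ gives $\Ihd(y)\le 2\pi\Delta^2h^2(|\log h|+C)$.

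\emph{Lower bound: reduction and good radii.}
It suffices to estimate $\Ihd(y)$ from below for $y$ with $\Ihd(y)\le 2\pi\Delta^2h^2(|\log h|+C)$, as otherwise the claim follows from the upper bound. For such $y$ we have $\|g_y-\gd\|_{L^2(B_1)}^2\le Ch^2|\log h|$ and $\|D^2y\|_{L^2(B_1)}^2\le C|\log h|$. Recall that $\gd$ is flat on $B_1\setminus\{0\}$ with total Gauss curvature $2\pi(1-\sqrt{1-\Delta^2})$ concentrated at the origin, and $\int_{\partial B_r}\kappa_{\gd}\,\d s_{\gd}=2\pi\sqrt{1-\Delta^2}$ for all $r$. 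A Fubini/Chebyshev argument applied to the two $L^2$ bounds produces a set $G\subset(\rho_0,1/2)$, where $\rho_0$ is a scale slightly larger than $h$ (one may take $\rho_0=h\,|\log h|^{3/2}$), with $\int_G\d r/r\ge|\log h|-\tfrac32\log|\log h|-C$, such that for $r\in G$: $D^2y|_{\partial B_r}\in L^2$; and (by interpolating between the small membrane norm and the controlled bending norm restricted to $\partial B_r$) $g_y|_{\partial B_r}$ is close to $\gd|_{\partial B_r}$ in a norm involving one tangential derivative.

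\emph{Lower bound: the core circle estimate.}
For $r\in G$ I claim $\int_{\partial B_r}|D^2y|^2\,\d\H^1\ge\frac{2\pi\Delta^2}{r}\,(1-\mathrm{err}(r))$ with $\int_G\mathrm{err}(r)\,\d r/r=o(|\log h|)$. This combines the two ideas announced in the introduction:
\begin{enumerate}[(i)]
\item (\emph{Interpolation between metric and Gauss curvature.}) By the Gauss--Bonnet theorem, valid in the $W^{2,2}$ setting with $\int_{B_r}K_y\,\d A_{g_y}$ interpreted through the connection form, $\int_{B_r}K_y\,\d A_{g_y}=2\pi-\int_{\partial B_r}\kappa_{g_y}\,\d s_{g_y}$; since $\kappa_{g_y}$ depends only on $g_y$ and one tangential derivative, the closeness of $g_y|_{\partial B_r}$ to $\gd|_{\partial B_r}$ gives $\int_{B_r}K_y\,\d A_{g_y}\ge 2\pi(1-\sqrt{1-\Delta^2})-\mathrm{err}(r)$.
\item (\emph{Isoperimetric inequality on the sphere.}) The quantity $\int_{B_r}K_y\,\d A_{g_y}$ equals the algebraic area of the image of the Gauss map $n$ on $B_r$, so $n(\partial B_r)$ is a closed curve on $S^2$ enclosing area at least $2\pi(1-\sqrt{1-\Delta^2})-\mathrm{err}(r)$; the isoperimetric inequality on $S^2$ then yields $\H^1(n(\partial B_r))\ge\sqrt{2\pi(1-\sqrt{1-\Delta^2})\cdot2\pi(1+\sqrt{1-\Delta^2})}-\mathrm{err}'(r)=2\pi\Delta-\mathrm{err}'(r)$.
\item Writing this length as $\int_{\partial B_r}|\partial_s n|\,\d\H^1$ with $s$ the Euclidean arclength and applying Cauchy--Schwarz, $\int_{\partial B_r}|\partial_s n|^2\,\d\H^1\ge\frac{2\pi\Delta^2}{r}(1-\mathrm{err}''(r))$.
\item Finally, the Gauss--Weingarten relations allow comparing $\int_{\partial B_r}|\partial_s n|^2$ with $\int_{\partial B_r}|D^2y|^2$. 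The naive pointwise bound $|\partial_s n|^2\le(1-\Delta^2)^{-1}|D^2y|^2$ loses a factor and would give only the constant $2\pi\Delta^2(1-\Delta^2)$; to recover the sharp $2\pi\Delta^2$ one must also account for the tangential (intrinsic/Christoffel) part of $D^2y|_{\partial B_r}$, which for $g_y$ close to $\gd$ carries the complementary contribution $\Delta^4/r^2$ per unit length. Equivalently, writing $Dy|_{\partial B_r}=R\,\Phi$ with $R$ an $SO(3)$-valued rotation field and $\Phi$ the reference frame, $\int_{\partial B_r}|D^2y|^2\,\d\H^1$ is bounded below by a one-dimensional variational problem for $R$ in which the defect enters through the isoperimetric inequality of (ii), the singular cone being the minimizer and producing exactly $\frac{2\pi\Delta^2}{r}$.
\end{enumerate}

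\emph{Conclusion and main obstacle.}
Integrating the core estimate by the coarea formula,
\[
\begin{aligned}
\int_{B_1}|D^2y|^2\,\d\L^2
&\ge\int_{\rho_0}^{1/2}\Big(\int_{\partial B_r}|D^2y|^2\,\d\H^1\Big)\,\d r
\ge 2\pi\Delta^2\int_G\big(1-\mathrm{err}(r)\big)\frac{\d r}{r}\\
&\ge 2\pi\Delta^2\Big(|\log h|-\tfrac32\log|\log h|-C\Big),
\end{aligned}
\]
and multiplying by $h^2$ (the membrane term being nonnegative) finishes the lower bound. I expect the main obstacle to be the quantitative error bookkeeping: controlling in step (i) how fast $\int_{\partial B_r}\kappa_{g_y}$ can drift from its reference value on good circles, and choosing the inner cutoff $\rho_0$ so that the errors accumulated over the $\approx|\log h|$ dyadic scales do not swallow the main term — this is what forces $\rho_0$ slightly above $h$ and produces the $-\tfrac32\log|\log h|$ correction (which is almost certainly not optimal). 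The sharp-constant bound in step (iv), which requires retaining the intrinsic part of the bending energy, is the second delicate point.
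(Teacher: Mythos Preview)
Your upper bound is essentially the paper's Lemma~\ref{lem:upperbound1}. The lower bound, however, follows the Riemannian route of the predecessor paper \cite{HOregular} (Gauss curvature, Gauss--Bonnet, spherical isoperimetric), which the present paper explicitly supersedes. The paper's control variable is not $\int_{B_r}K_y\,\d A_{g_y}$ but
\[
\kappa_y(r)=\sum_{i=1}^3\int_{B_r}\det D^2y_i\,\d x,
\]
and the key identity is the very weak Hessian \eqref{eq:23}: $\sum_i\det D^2y_i=(y_{,1}\cdot y_{,2})_{,12}-\tfrac12(|y_{,1}|^2)_{,22}-\tfrac12(|y_{,2}|^2)_{,11}$, whose right-hand side involves only the metric components $(g_y)_{ij}$, no derivatives of the metric. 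This replaces your step~(i): the paper never needs geodesic curvature or one tangential derivative of $g_y$ on circles; it just integrates by parts in $r$ (Proposition~\ref{prop:interpol1}) and interpolates $\|F'\|_{L^1}\le C\|F\|_{L^1}^{1/2}\|F''\|_{L^1}^{1/2}$ between $\|F\|_{L^1}\lesssim\|g_y-\gd\|_{L^2}$ and $\|F''\|_{L^1}\le\|D^2y\|_{L^2}^2$. Your step~(i) as written is a gap: the geodesic curvature $\kappa_{g_y}$ depends on $\partial g_y$, and the interpolation you invoke to control it on good circles is not carried out and is precisely the difficulty the paper's method was invented to avoid.

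The second and sharper difference is that the paper replaces your steps~(ii)--(iv) by the \emph{planar} Sobolev inequality applied to the Brouwer degree of $Dy_i$ (Lemma~\ref{lem:isoper}): $\int_{\partial B_r}|D^2y_i|\,\d\H^1\ge(4\pi|\int_{B_r}\det D^2y_i|)^{1/2}$. With Jensen this gives $\sum_i\int_{\partial B_r}|D^2y_i|^2\,\d\H^1\ge \tfrac{2}{r}|\kappa_y(r)|\approx\tfrac{2\pi\Delta^2}{r}$ directly, with the sharp constant and no metric comparison. This eliminates your step~(iv) entirely: there is no shape operator, no Weingarten relation, no need to recover the ``intrinsic/Christoffel part'' of $D^2y$. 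The loss-of-factor problem you flag in~(iv) is real for the Gauss-map approach and your proposed fix (a one-dimensional variational problem for an $SO(3)$-valued frame) is not worked out; the paper simply sidesteps it by never leaving Euclidean coordinates.
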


Before we come to the statement of the other theorems, let us  try to explain the improvement of our techniques that allows us to
drop the additional assumptions we had to make in \cite{HOregular}. It consists in
noting that 
the function $\sum_{i=1}^3\det D^2y_i$ is controlled by both the membrane and
the bending term, in different functions spaces. Indeed, it is not difficult to
see that $\sum_{i=1}^3\det D^2y_i$ is bounded in $L^1$ by the bending
energy. The control exerted by the membrane energy is slightly more subtle. We
recall the very weak form of the Hessian:
\begin{equation}
\det D^2 w=(w_{,1} w_{,2})_{,12}-\frac12
  (|w_{,1}|^2)_{,22}-\frac12(|w_{,2}|^2)_{,11} \quad \text{ for } w\in C^2\,.\label{eq:23}
  \end{equation}
Hence, 
\begin{equation}
\sum_{i=1}^3\det D^2 y_i=(y_{,1}\cdot y_{,2})_{,12}-\frac12
  (|y_{,1}|^2)_{,22}-\frac12(|y_{,2}|^2)_{,11} \quad \text{ for } y\in C^2(B_1;\R^3)\,,\label{eq:23}
  \end{equation}
and we see that $\sum_{i=1}^3\det D^2y_i$ is controlled by the membrane term in
$W^{-2,1}$. 
Thus, by interpolation, we can get control over integrals of $\sum_{i=1}^3\det D^2
y_i$ in $L^1$. By the Sobolev inequality in $\R^2$, lower bounds for these
integrals can be translated to lower bounds for $|D^2y|$ (see Section \ref{sec:sobol-type-ineq}),  and hence for the
bending energy. \\
\\
As a consequence of Theorem \ref{thm:main1}, we will be able to prove a
scaling law in three-dimensional elasticity.
Let $\arccos:[-1,1]\to [0,\pi]$ denote the inverse of $\cos:[0,\pi]\to[-1,1]$.
Define 
\[
B_{1,\Delta}:=\left\{x=(x_1,x_2)\in B_1\setminus\{0\}:0\leq \arccos \frac{ x_1}{
|x|}< \sqrt{1-\Delta^2}\pi\right\}\,.
\] 
Let $\R_-:=\{(x_1,0):x_1\leq 0\}$, and let $\varphi:\R^2\setminus\R_-\to \R$
be the angular coordinate satisfying $x=|x|(\cos\varphi(x),\sin\varphi(x))$ with
values in $(-\pi,\pi)$.
We define the map $\iota_\Delta:\R^2\setminus\R_-\to B_1$  by
\[
\iota_\Delta(x)=\left(|x|\cos\frac{\varphi(x)}{\sqrt{1-\Delta^2}},|x|\sin\frac{\varphi(x)}{\sqrt{1-\Delta^2}}\right)\,.
\]
Now let
$\iota_{\Delta}^{(3)}:B_{1,\Delta}\times[-h/2,h/2]\to\R^2\times [-h/2,h/2]$ be defined by 
\[
\iota_{\Delta}^{(3)}(x',x_3)=\left(\iota_\Delta(x'),x_3\right)\,.
\]
Note that $(\iota_{\Delta}^{(3)})^{-1}$ is defined almost everywhere on $B_1\times [-h/2,h/2]$.
We define the set of allowed deformations to be
\[
\begin{split}
  \Ahd:=\Big\{&Y\in W^{1,2}(B_{1,\Delta}\times[-h/2,h/2];\R^3)\\
  &\text{ such that }Y\circ (\iota_{\Delta}^{(3)})^{-1}\in
  W^{1,2}(B_{1}\times[-h/2,h/2] ;\R^3)\Big\}\,.
\end{split}
\]
Let $W\in C(\R^{3\times 3};[0,\infty))$ be the stored energy density  that is
$SO(3)$-invariant, i.e., 
\[
W(RF)=W(F)\quad\text{ for all }R\in SO(3)\,,
F\in \R^{3\times 3}\,,
\] 
and such that it vanishes on $SO(3)$ and has quadratic growth, i.e.,
\begin{equation}
W(F)\geq C\dist^2(F,SO(3))\quad \text{ for
  all }F\in\R^{3\times 3}\,,\label{eq:32}
\end{equation}
 and  
\begin{equation}
W(F)<C_2\dist^2(F,SO(3)) \quad \text{ for }F\in\mathcal F_0\,,\label{eq:31}
\end{equation}
where $\mathcal F_0$ is some neighborhood of $SO(3)$ in $\R^{3\times 3}$.\\
The energy functional is given by $\Ehd:\Ahd\to\R$,

\begin{equation}
\Ehd(Y)=\frac{1}{h}\int_{\Ohd} W(DY)\d x\,.\label{eq:29}
\end{equation}

We will prove

\begin{theorem}
\label{thm:3d}
There exists a constant $C=C(\Delta,W)>0$ such that
\[
\frac{1}{C}h^2|\log h| \leq \min_{y\in\Ahd} \Ehd(y)\leq C h^2|\log h|\,
\]
for all  small enough $h$.
\end{theorem}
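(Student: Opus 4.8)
The plan is to deduce the three-dimensional scaling law from the plate result, Theorem \ref{thm:main1}, by the now-standard interplay between three-dimensional nonlinear elasticity and plate models, using the geometric rigidity estimate of Friesecke--James--M\"uller. The coordinate map $\iota_\Delta^{(3)}$ is there precisely to flatten out the disclination: a deformation $Y\in\Ahd$ of the sector $\Ohd$ corresponds to a deformation $\tilde Y=Y\circ(\iota_\Delta^{(3)})^{-1}$ of the full cylinder $B_1\times[-h/2,h/2]$, and since $\iota_\Delta$ is, away from the origin, a smooth local diffeomorphism with controlled derivatives that maps the reference metric $\id_{2\times2}$ of the sector to (a conformal rescaling of) the singular cone metric $\gd$, the energy $\Ehd(Y)$ is comparable to a three-dimensional energy of $\tilde Y$ computed against the reference configuration with metric $\gd$ extended trivially in $x_3$.

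\medskip

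\textbf{Lower bound.} Given $Y\in\Ahd$ with $\Ehd(Y)\le C h^2|\log h|$ (otherwise there is nothing to prove), I would first use \eqref{eq:32} so that $\frac1h\int_{\Ohd}\dist^2(DY,SO(3))\,\d x\le C h^2|\log h|$, transport this to the cylinder via $\iota_\Delta^{(3)}$, and then apply the geometric rigidity estimate on dyadic annuli $A_k=\{2^{-k-1}<|x'|<2^{-k}\}\times[-h/2,h/2]$ (the scaling-invariant version, after rescaling each annulus to unit size) to produce a piecewise-constant rotation field $R_k\in SO(3)$ with $\int_{A_k}|DY-R_k|^2\le C\int_{A_k}\dist^2(DY,SO(3))$. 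Controlling the jumps $|R_{k+1}-R_k|$ by a telescoping/Poincar\'e argument, one defines an approximating map and extracts from $Y$ an effective plate deformation $y:B_1\to\R^3$ obtained by averaging $Y$ over the thickness (composed back with $\iota_\Delta$), together with the control
\[
\int_{B_1}\bigl(|g_y-\gd|^2+h^2|D^2y|^2\bigr)\,\d\L^2\le C\,h\,\Ehd(Y)+C h^4|\log h|\le C h^3|\log h|.
\]
Dividing by $h^2$ and invoking the lower bound of Theorem \ref{thm:main1}, namely $\Ihd(y)\ge c h^2|\log h|$, gives $\Ehd(Y)\ge \frac1C h^2|\log h|$. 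The passage from $y\in W^{2,2}$ back to something legitimately competing in Theorem \ref{thm:main1} (it already lives in $W^{2,2}(B_1;\R^3)$, with no constraint to check) is immediate since Theorem \ref{thm:main1} is ansatz-free.

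\medskip

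\textbf{Upper bound.} I would take the near-optimal plate deformation $y$ realizing the upper bound in Theorem \ref{thm:main1} — a map that equals the singular cone $\ydel$ outside a ball of radius $\sim h$ and is suitably mollified inside — and build the three-dimensional competitor $Y(x',x_3)=y(x')+x_3\,\nu(x')$ on $B_{1,\Delta}\times[-h/2,h/2]$ after pulling back through $\iota_\Delta$, where $\nu$ is the unit normal to $y$. A direct computation of $W(DY)$ using \eqref{eq:31} (quadratic growth near $SO(3)$) yields $W(DY)\le C(|g_y-\gd|^2+x_3^2|D^2y|^2+\text{h.o.t.})$; integrating in $x_3$ over $[-h/2,h/2]$ produces a factor $h$ from the membrane part and $h^3$ from the bending part, so that $\Ehd(Y)=\frac1h\int W(DY)\le C\,\Ihd(y)\le C h^2|\log h|$. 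One must check that $\nu$ is well-defined and that the Jacobian of $\iota_\Delta^{(3)}$ stays bounded away from $0$ and $\infty$ on the region of interest, which is true since $D\iota_\Delta$ is a rotation composed with a radial rescaling of the angular variable by the fixed factor $1/\sqrt{1-\Delta^2}$; near the origin one simply cuts off inside the ball of radius $h$ where $Y$ can be taken constant in $(x',x_3)$ up to a rigid motion, contributing only $O(h^2)$ energy.

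\medskip

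\textbf{Main obstacle.} The delicate point is the lower bound: making the rigidity-plus-dyadic-covering argument yield the averaged plate map $y$ with the \emph{sharp} power $h^3|\log h|$ on the right-hand side (rather than a worse power that would destroy the logarithm after dividing by $h^2$), and handling the coordinate singularity at the origin — where $\iota_\Delta$ degenerates and the annuli accumulate — without losing the logarithmic factor. This requires that the error terms from the thickness-averaging and from summing the rotation jumps over the $\sim|\log h|$ dyadic scales be genuinely controlled by $h\,\Ehd(Y)$ and not merely by $\Ehd(Y)$; the fact that the innermost relevant scale is $|x'|\sim h$ (below which the energy contribution is negligible) is what keeps the number of scales at $O(|\log h|)$ and makes the bookkeeping close.
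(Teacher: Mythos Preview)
Your upper bound sketch is essentially the paper's argument (Kirchhoff--Love ansatz on top of the two-dimensional test function), and is correct. The lower bound, however, has a genuine gap in the rigidity step. Applying the Friesecke--James--M\"uller estimate on dyadic annuli $A_k=\{2^{-k-1}<|x'|<2^{-k}\}\times[-h/2,h/2]$ does \emph{not} give a uniform constant: after rescaling $A_k$ by $2^k$ to unit in-plane size, the thickness becomes $h\,2^k$, which ranges from $\sim 1$ (innermost, $2^{-k}\sim h$) to $\sim h$ (outermost, $k=0$). The rigidity constant on a plate of aspect ratio $\epsilon$ scales like $\epsilon^{-2}$, so on the outer annuli you pick up a factor $h^{-2}$, and the membrane control is lost. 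Relatedly, a single rotation $R_k$ per annulus cannot give the bending estimate: to control $\int|D^2y|^2$ one needs the approximating rotation field to vary at scale $h$, not at scale $2^{-k}$. Also, the bookkeeping you display, $\Ihd(y)\le C\,h\,\Ehd(Y)$, has a stray factor of $h$; if that inequality held, it would yield $\Ehd(Y)\ge c\,h|\log h|$, which is false. The correct relation is $\text{(2D plate energy)}\le C\,\Ehd(Y)$.

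The paper avoids these problems by covering the sector with boxes $Q_{a,h}\times[-h/2,h/2]$ of \emph{fixed} sidelength $h$, applying rigidity on each (uniform constant, since all boxes are mutual rescalings), averaging $Y$ in thickness, and then mollifying the averaged map at scale $h$. The mollification step is what produces the bending bound via $|D\eta_h*(Dy^{(1)}-\hat R_{a,h})|\le C h^{-1}|Dy^{(1)}-\hat R_{a,h}|$. Two further points you should be aware of: first, because geometric rigidity is formulated for the flat reference metric, the paper does \emph{not} reduce directly to the functional $\Ihd$ on $B_1$ with reference metric $\gd$, but to a variant plate energy $\Jhdt$ on the sector $B_{1,\Delta}$ with Euclidean reference metric, for which the lower bound has to be established separately (Proposition~\ref{prop:Jhd}) via a change of variables (Lemma~\ref{lem:cov}) and a reuse of Propositions~\ref{prop:interpol1} and~\ref{prop:diadicannuli}; second, the boxes near the two edges of the cut require a separate treatment (Lemma~\ref{lem:Qstar}), which in your outline is not addressed.
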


The third theorem is  the analogous result to Theorem \ref{thm:main1} in the \FvK
setting. Here, the free elastic energy $\vk_{h,\Delta}$ of the  deflection $(u,v)\in
W^{1,2}(B_1;\R^2)\times W^{2,2}(B_1)$ is given by 

\begin{equation}
\vk_{h,\Delta}(u,v)=\int  \left| 2\,\sym Du + Dv\otimes
  Dv+\Delta^2\,\hat x^\bot\otimes \hat x^\bot\right|^2+h^2|D^2v|^2\d x\,.\label{eq:27}
\end{equation}

Again, the parameter $\Delta$ determines the size of the deficit of the
disclination. For a heuristic derivation of $\vk_{h,\Delta}$ from $\Ihd$ see
Section \ref{sec:fvk-setting-proof}.
Our third result is
\begin{theorem}
\label{thm:main}
There exists a constant $C=C(\Delta)>0$ with the following property: 
\begin{equation}
2\pi\Delta^2h^2\left(|\log h|-2\log|\log h|-C\right) \leq   \min_{(u,v)\in\A }\vk_{h,\Delta}(u,v)\leq 2\pi\Delta^2 h^2(|\log h|+C)\label{eq:3}\,
 \end{equation}
for all small enough $h$.
\end{theorem}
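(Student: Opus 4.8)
The plan is to prove the two inequalities separately, exploiting the structural similarity between $\vk_{h,\Delta}$ and $\Ihd$. For the upper bound, I would construct an explicit competitor $(u,v)$ that mimics the singular cone away from a ball $B_\rho$ of radius $\rho\sim h/|\log h|^{1/2}$ (or thereabouts) and is smoothed inside. The out-of-plane component $v$ should be taken radial, $v(x)=\Delta|x|$ for $|x|\ge\rho$ and quadratically interpolated to a regular function inside $B_\rho$; the in-plane component $u$ is then chosen to make the membrane term $2\,\sym Du + Dv\otimes Dv + \Delta^2\,\hat x^\bot\otimes\hat x^\bot$ vanish identically on $B_1\setminus B_\rho$ — this is possible because outside $B_\rho$ the relevant combination $Dv\otimes Dv + \Delta^2\hat x^\bot\otimes\hat x^\bot = \Delta^2\hat x\otimes\hat x + \Delta^2\hat x^\bot\otimes\hat x^\bot = \Delta^2\id$ is (a constant multiple of) the identity and hence is a symmetrized gradient (of $\tfrac{\Delta^2}{2}x$). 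Inside $B_\rho$ one accepts an $O(1)$ membrane error on a set of area $O(\rho^2)$, contributing $O(\rho^2)=O(h^2/|\log h|)$, which is lower order. The bending term $h^2\int|D^2v|^2$ over the cone region $B_1\setminus B_\rho$ contributes $h^2\int_\rho^1 \tfrac{\Delta^2}{r^2}\,2\pi r\,dr = 2\pi\Delta^2 h^2|\log\rho| = 2\pi\Delta^2 h^2(|\log h|+O(\log|\log h|))$, with the interior of $B_\rho$ adding $O(h^2)$. This yields the stated upper bound with room to spare.

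For the lower bound, the plan is to adapt the interpolation argument sketched in the introduction to the scalar Monge–Ampère setting, which is in fact cleaner here since only $v$ enters the bending term. Writing $K := \det D^2 v$, equation \eqref{eq:23} applied to $w=v$ shows $\det D^2 v = (v_{,1}v_{,2})_{,12} - \tfrac12(|v_{,1}|^2)_{,22} - \tfrac12(|v_{,2}|^2)_{,11}$, and using the membrane constraint one rewrites $Dv\otimes Dv = 2\,\sym Du + (\text{membrane error}) - \Delta^2\hat x^\bot\otimes\hat x^\bot$; since $2\,\sym Du$ is a symmetrized gradient it contributes nothing to the very weak Hessian, so $\det D^2 v$ equals a distribution built from the membrane error (small in $L^1$ by the energy bound, controlled in $W^{-2,1}$) plus the fixed singular term $-\tfrac12 \partial^2_{ij}$ applied to $\Delta^2\hat x^\bot\otimes\hat x^\bot$. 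A direct computation identifies this last contribution as carrying a concentrated $-\pi\Delta^2\,\delta_0$ (this is precisely the curvature of the disclination). Now the strategy: on an annulus $B_r\setminus B_{r/2}$ with $h\lesssim r\lesssim 1$, interpolate between the $L^1$ bound on $\det D^2 v$ coming from the bending energy $\|\det D^2 v\|_{L^1}\le \tfrac12\|D^2 v\|_{L^2}^2$ and the $W^{-2,1}$ bound coming from the membrane energy to control $\int_{B_r\setminus B_{r/2}}|\det D^2 v|$ from below by the concentrated curvature, i.e. roughly $\int_{B_r}\det D^2 v \approx -\pi\Delta^2$ up to errors that are small when $r$ is small and the total energy is small. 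Then the Sobolev-type inequality of Section \ref{sec:sobol-type-ineq} converts a lower bound on $\int_{B_r\setminus B_{r/2}}|\det D^2 v|$ into a lower bound on $\int_{B_r\setminus B_{r/2}}|D^2 v|^2$ of order $1/r^2$ times a logarithmic correction; summing over dyadic annuli from scale $h$ (times a $|\log h|$-correction) up to scale $1$ produces $\int|D^2 v|^2 \gtrsim \Delta^2(|\log h| - 2\log|\log h| - C)$, and multiplying by $h^2$ gives the claim.

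The main obstacle I anticipate is the bookkeeping in the interpolation/summation step: one must choose the cutoff scale $\rho_0$ (below which the annuli no longer contribute, essentially $\rho_0\sim h$ up to a $|\log h|$ power) so that on each annulus above $\rho_0$ the membrane error genuinely is negligible compared to the $-\pi\Delta^2$ coming from the disclination, and so that the accumulated errors across all $O(|\log h|)$ annuli still total only $O(\log|\log h|)$ rather than $O(|\log h|)$ — this is exactly where the $-2\log|\log h|$ loss in \eqref{eq:3} originates, and getting the constant in front of $\log|\log h|$ right requires care in how the Sobolev inequality degenerates as the annulus becomes thin. A secondary technical point is justifying the distributional identity \eqref{eq:23} and the identification of the $\delta_0$-contribution for $v\in W^{2,2}$ rather than $C^2$, which should follow by approximation together with the a priori bounds, but must be stated carefully near the origin where $v$ need not be regular. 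Everything else — the explicit competitor, the elementary computation of the singular part of $-\tfrac12\partial^2_{ij}(\Delta^2\hat x^\bot\otimes\hat x^\bot)$, and the passage from $\det D^2 v$ to $|D^2 v|^2$ — is routine given the tools assembled in the earlier sections.
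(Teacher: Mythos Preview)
Your plan is essentially the paper's own argument: the lower bound via the interpolation estimate on $\kappa_v^{\mathrm{vK}}(r)=\int_{B_r}\det D^2v$ (Proposition~\ref{prop:interpolvk}) combined with the Sobolev-type inequality on circles (Lemma~\ref{lem:isoper}) and a dyadic summation starting at scale $h|\log h|^{2}$ (Proposition~\ref{prop:diadicannuli} with $\alpha=1$) is exactly what the paper does, and your identification of the singular $\pi\Delta^2\delta_0$ contribution and the vanishing of the $\sym Du$ part under the very-weak-Hessian operator is the right mechanism (up to a sign: the curvature is $+\pi\Delta^2$, not $-\pi\Delta^2$).

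One genuine slip: for the upper bound you propose a cutoff at $\rho\sim h/|\log h|^{1/2}$, which yields bending energy $2\pi\Delta^2 h^2|\log\rho|=2\pi\Delta^2 h^2(|\log h|+\tfrac12\log|\log h|+O(1))$. This does \emph{not} meet the stated bound $2\pi\Delta^2 h^2(|\log h|+C)$. Simply take $\rho=h$: the membrane error is then $O(h^2)$ (still lower order) and the bending contribution is $2\pi\Delta^2 h^2(|\log h|+O(1))$ as required. This is precisely the test function in Lemma~\ref{lem:upperbound2}. A smaller imprecision in your lower-bound sketch: the Sobolev inequality is applied on the \emph{circle} $\partial B_r$ to bound $\int_{\partial B_r}|D^2v|\,d\mathcal H^1$ from below by $(4\pi|\kappa_v^{\mathrm{vK}}(r)|)^{1/2}$, then Jensen and integration in $r$ give the annular $L^2$ bound --- it is $|\int_{B_r}\det D^2v|$ over the full ball that enters, not $\int_{B_r\setminus B_{r/2}}|\det D^2v|$.
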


\subsection{Scientific context}
In the last two and a half decades or so, there has been  a lot of interest in
the energy focusing in thin elastic sheets, predominantly in the physics and
engineering community. The structures under consideration are typically sharp
folds, conical singularities, or a (possibly very complex) combination of both.
The first analysis of the scaling law for the free elastic
energy in the sense of an asymptotic expansion for a thin elastic shell
displaying sharp folds can be found in 
\cite{witten1993asymptotic}.  The so-called ``minimal ridge''
has been analyzed in \cite{1996PhRvE..53.3750L}, see also
\cite{PhysRevLett.87.206105,1997PhRvE..55.1577L,PhysRevLett.78.1303,Lobkovsky01121995}. Structures
with stress focusing in vertices (the so-called \emph{d-cones})
were first investigated in \cite{MR1447150}, see also
\cite{CCMM,PhysRevLett.80.2358,Cerda08032005}. To the interested reader, we
recommend the overview article by Witten \cite{RevModPhys.79.643}, where many
more references can be found.\\
\\
In the mathematical literature, the most closely related articles to the present
work  are
\cite{MR2358334,MR3102597,MR3168627}. In \cite{MR2358334},  it has been proved that  the elastic energy per unit thickness of a ``single fold''  scales with $h^{5/3}$, building on results from \cite{MR2023444}. In
\cite{MR3168627,MR3102597}, the following  has been proved: Take an 
elastic sheet in the shape of a disc, and prescribe the deformation of 
the sheet at the center and at the boundary, such that it coincides with a
(singular)  
conical deformation there. In this class of deformations, the
elastic energy per  unit thickness scales with $h^2|\log h|$.\\
On a technical level, the works \cite{MR2358334,MR3102597,MR3168627} treat
variational problems for the  functional \eqref{eq:26}, where the
reference metric $\gd$ is replaced by the flat 2-dimensional Euclidean metric,
with certain Dirichlet boundary conditions. These boundary conditions 
 are of a very special type: They are chosen such that there exists a unique
 Lipschitz continuous isometric immersion\footnote{By a 
 Lipschitz continuous isometric immersion, we mean a Lipschitz map $\bar
 y:B_1\to \R^3$ with the property that $D\bar y^TD \bar y$ equals the reference
 metric almost everywhere.} that satisfies them, which possesses infinite
bending energy. In the sequel, we
 will call this Lipschitz
 continuous isometric immersion the ``singular reference
 configuration''. Also, we will call boundary conditions ``tensile'', if a
 singular reference configuration exists.
The idea of proof in  \cite{MR2358334,MR3102597,MR3168627} is that deviations from the singular reference configuration are penalized by the membrane energy.
The optimal balance between membrane and bending energy yields the lower bound
in the elastic energy.\\
\\
In the  setting of the single disclination, there are no tensile boundary conditions -- there are
none at all. \\
\\
To the best of our knowledge, the method of proof using tensile boundary conditions has so far
been the only one for proving ansatz-free lower bounds in variational models for thin
elastic sheets with energy focusing in folds and vertices (not counting the
predecessor \cite{HOregular} of the current work). We expect that the method of
proof we present here will have applications in  variational models that are
sufficiently closely related to the single disclination. An interesting open
question is whether or not our
techniques can also be applied  in  settings with a flat reference metric.
\subsection{$h$-principle and rigidity}
We  want to put Theorem \ref{thm:main1}  in  yet another mathematical context. Recall the
results by Nash \cite{MR0065993} and Kuiper \cite{MR0075640}, stating that any
strictly short immersion $y:M\to\R^{n+1}$ of an $n$-dimensional  Riemannian
manifold $(M,g)$ can be arbitrarily well approximated by a $C^1$ isometric
immersion $\tilde y$ in the $C^0$ norm. I.e., given $\e>0$ and an  immersion
$y\in C^1(M;\R^{n+1})$ with $D y^TD y<g$ everywhere, there
exists $\tilde y\in C^1(M;\R^{n+1})$ with $D\tilde y^TD\tilde y=g$ and
$\|y-\tilde y\|_{C^0}<\e$. These results are relevant for the present context,
since the leading term in the elastic energy \eqref{eq:26}, the membrane term, penalizes
deviations from isometric immersions. By the Nash-Kuiper Theorem, there exists a
huge amount  of mutually vastly different shapes that have arbitrarily small
membrane energy. In the present setting, one needs to show that all of these many degrees of freedom
carry large bending energy.\\
\\
Next, recall the well known Weyl problem from differential geometry: Given a
Riemannian metric $g$ with positive Gauss curvature on the two-sphere $S^2$,
find an isometric immersion $y:S^2\to \R^3$. Note that by the Nash-Kuiper result
above, there exist many solutions if we only require $y\in C^1$. On the other
hand, by a result by
Pogorelov \cite{MR0346714}, the solution to the Weyl problem is unique (up to
Euclidean motions)
in the class of immersions with \emph{bounded extrinsic curvature}. This class
is defined by requiring that the pull-back of the volume form on $S^2$  under the Gauss map defines a
signed Radon measure. For smooth
maps, the total variation of that measure is just the $L^1$-norm of the Gauss
curvature. We see that control over curvature ``eliminates'' the Nash-Kuiper constructions.\\
\\
On a heuristic level, our approach can be viewed as a quantitative version of Pogorelov's rigidity
result. In order to see how, note that
the different results can be formulated as a statement on the measure 
\[
k_y:U\mapsto \int_U K_y\d A_y\,,
\]
defined on open sets $U$,  where $K_y\d A_y$ is a certain curvature
form (in Pogorelov's work and \cite{HOregular}, the Gauss curvature form; for the
geometrically fully nonlinear model treated here, $\sum_{i=1}^3\det D^2 y_i\d x$;
for the \FvK case, $\det D^2v \,\d x$, which is the \FvK version of Gauss
curvature). The crucial part of Pogorelov's result is that the measure
$k_y$ is identical to the pull back of the volume form on $S^2$ under the Gauss map if
the latter is a Radon measure. Here, we give certain  quantitative estimates for the
difference $k_y-k_0$, where $k_0$ is the curvature measure
associated to the reference configuration. These estimates allow for
the construction of a lower bound of the bending energy via a suitable
Sobolev type inequality.\\
\\
By imposing tensile boundary conditions as in \cite{MR2358334,MR3102597,MR3168627}, one eliminates strictly short maps from
the set of allowed configurations. In this way, the problem described above is
circumvented. Here, we describe for the first time a setting where the
constructions in the style of Nash-Kuiper are allowed by the boundary data, but
shown to be energetically unfavorable. 
\subsection{Notation and plan of the paper} 
The canonical basis vectors of $\R^n$ will be denoted by $e_1,\dots,e_n$. For a vector $x\in \R^n$, we
denote its components by $x_1,\dots,x_n$. For a vector $v=(v_1,v_2)\in
\R^2$, let $v^\bot:=(-v_2,v_1)$. Open balls in
$\R^2$ are denoted by $B_r:=\{x\in\R^2:|x|<r\}$. 
The set of linear isometries from $\R^2$ to $\R^3$ is denoted by $O(2,3)$.\\
For a function $f$ on $\R^2$,
we use the notation $f_{,i}=\partial_{x_i}f$ for the partial derivatives.\\
The volume of the unit ball in
$\R^n$ will be denoted by $\omega_n:=\pi^{n/2}/\Gamma(n/2+1)$.
The
symbol $\L^n$ denotes $n$-dimensional Lebesgue measure, and $\H^k$ is the
$k$-dimensional Hausdorff measure. \\
The symbol $C$ denotes numerical
constants that only depend on $\Delta$. A statement such as ``$f\leq Cg$'' is to
be understood as ``there exists a constant $C=C(\Delta)>0$ such that $f\leq C
g$''. The value of $C$ may vary from one line to the next.
By $g(h)=o(f(h))$ and $\tilde g(h)=O(\tilde f(h))$, we mean that
\[
\limsup_{h\to 0} \frac{|g(h)|}{|f(h)|}=0\,,\quad \limsup_{h\to 0} \frac{|\tilde
  g(h)|}{|\tilde f(h)|}<\infty\,
\]
respectively.\\
\\
This paper is structured as follows:
In Section \ref{sec:sobol-type-ineq}, we state a Sobolev type inequality
for the Brouwer degree, which will help us to translate control over $k_y$ into
a lower bound for the bending energy later on. Theorems \ref{thm:main1},
\ref{thm:3d} and  \ref{thm:main} are proved in Sections
\ref{sec:proof-theor-refthm:m}, \ref{sec:three-dimens-elast} and
\ref{sec:fvk-setting-proof} respectively.

\section{A Sobolev type  inequality for the Brouwer degree}
\label{sec:sobol-type-ineq}
In the proof of Theorems \ref{thm:main1} and \ref{thm:main} below, we will use the Sobolev inequality to get a lower bound on the
bending term. From smallness of the membrane and bending energies, we will have
for most $r\in[0,1]$,
a lower bound on $\sum_{i=1}^3\H^2(Dy_i(B_r))$ in the geometrically fully
nonlinear model, and on  $\H^2(Dv(B_r))$ in the \FvK model (see Propositions \ref{prop:interpol1} and \ref{prop:interpolvk}). The Sobolev inequality will give us then a lower bound for
$|D^2 y|$ (in the first case) and $|D^2v|$ (in the second) on $\partial
B_r$.\\ 
In the proof, we are going to need some standard facts concerning the Brouwer
degree. For a more thorough exposition including proofs of the  claims made
here, see e.g.~\cite{MR1373430}.\\ 
Let  $U\subset\R^n$ be bounded, 
$f\in C^\infty(\overline{U};\R^n)$, and   
$y\in \R^n\setminus f(\partial U)$. Let $A_{y,f}$ denote the connected component
of $\R^n\setminus f(\partial U)$ that contains $y$, and let 
$\mu$ be a smooth $n$-form on $\R^n$
with support in $A_{y,f}$ and $\int_{\R^n}\mu=1$. Then the  Brouwer degree
$\deg(y,U,f)$ is defined by
\[
\deg(y,U,f)=\int_U f^*\mu\,,
\]
where $f^*$ denotes the pull-back under $f$. It can be shown that this definition
is independent of the choice of $\mu$, and that $\deg(\cdot,U,f)$ is
integer-valued, and constant on connected components of $\R^n\setminus
f(\partial U)$. By approximation with smooth functions, it can be shown that the
Brouwer degree has a well defined meaning also for $f\in C^0(\overline U;\R^n)$
and $y\in\R^n \setminus f(\partial U)$.\\
From now on, let us suppose that $U\subset\R^n$ is bounded and open with
Lipschitz boundary. For $f\in C^1(\overline U;\R^n)$,  and an $n$-form $\mu$ with coefficients in $L^\infty(\R^n)$, we have
\[
\int_{\R^n} \deg(y,U,f)\mu=\int_U f^*\mu\,.
\]
This follows immediately from the defining relation for the Brouwer degree by
approximation with smooth functions. If $\mu$ is an exact form, i.e., $\mu=\d
\omega$ for some $n-1$ form $\omega$ with coefficients in $W^{1,\infty}$, then
\begin{equation}
\begin{split}
  \int_{\R^n} \deg(\cdot,U,f)\mu=&\int_U f^*(\d\omega)\\
   =&\int_U \d\,(f^*\omega)\\
  =&\int_{\partial U}f^*\omega\,.
\end{split}\label{eq:11}
\end{equation}
\renewcommand{\div}{\mathrm{div}\,}
We also recall that the space $BV(\R^n)$ is defined as the set of $g\in
L^1(\R^n)$ such that the  total variation $|Dg|(\R^n)$  is finite,
\begin{equation}
|Dg|(\R^n)=\sup\left\{\int_{\R^n} g\,\div h\, \d x:h\in C^1(\R^n;\R^n),\,|h(x)|\leq 1 \text{ for all
  } x\in\R^n\right\}\,.\label{eq:10}
\end{equation}
Let $C^1(\R^n;\wedge^{n-1})$ denote the set of $n-1$ forms on
$\R^n$ with coefficients in $C^1(\R^n)$,
\[
\begin{split}
  C^1(\R^n;\wedge^{n-1})=&\Big\{\sum_{i_1<\dots<i_{n-1}}\omega_{i_1\dots
    i_{n-1}}
  \d x_{i_1}\wedge\dots\wedge \d x_{i_{n-1}}:\\
  &\omega_{i_1\dots i_{n-1}}\in C^1(\R^n) \text{ for all
  }i_1,\dots,i_{n-1}\in\{1,\dots,n\}\Big\}\,.
\end{split}
\]
Note that the space of $n-1$ forms at a given point $x$ is canonically
isomorphic to $\R^n$, via
\[
\d x_{\sigma(1)}\wedge\dots \wedge\d x_{\sigma(n-1)}\mapsto \sgn\, \sigma
\,e_{\sigma(n)}\,,
\]
for any permutation $\sigma:\{1,\dots,n\}\to\{1,\dots,n\}$.
 When we write $|\omega(x)|$, we mean the norm of the $n-1$
form $\omega(x)$ under that isomorphism.

 % On the space of $n-1$-forms $\alpha$, we use the
% dual norm
% \[
% |\alpha|^*=\sup \{\alpha(v):|v|\leq 1\}\,,
% \]
% where $v$ is an $n-1$ vector, and $|\cdot|$ denotes the Euclidean norm.
By \eqref{eq:11} and \eqref{eq:10}, we have

\begin{equation}
\begin{split}
  |D(\deg(\cdot,U,f))|(\R^n)=&\sup\left\{\int_{\R^n} \deg(\cdot,U,f)\div h \d x:h\in
    C^1(\R^n;\R^n),\,|h|\leq 1 \right\}\\
  =& \sup\left\{\int_{\partial U} f^*\omega: \omega\in
    C^1(\R^n;\wedge^{n-1}),\,|\omega|\leq 1 \right\}\\
  = & \int_{\partial U} |Df^\parallel|\d \H^{n-1}\,,
\end{split}\label{eq:12}
\end{equation}
where $Df^\parallel(x)$ denotes the restriction of the gradient of $Df$ to the
 space tangent to $\partial U$ (which exists $\H^{n-1}$ almost everywhere by assumption). 
Finally, recall the Sobolev inequality for $BV$ functions on $\R^n$ (see
e.g.~\cite{MR2039959}, Theorem 2.1),
\begin{equation}
\left(\int_{\R^n} |f|^{n/(n-1)}\d x\right)^{(n-1)/n}\leq C(n)|Df|(\R^n)\,,\label{eq:25}
\end{equation}
where $C(n)=(n\omega_n^{1/n})^{-1}$ is  the isoperimetric constant
in $\R^n$.

\begin{lemma}
\label{lem:isoper}
For $v\in C^2(\overline{B_1})$ and $0\leq r\leq 1$,
\[
\int_{\partial B_r}|D^2 v|\d\H^1\geq \left(4\pi \left|\int_{B_r}\det
    D^2v\d x\right|\right)^{1/2}\,.
\]
\end{lemma}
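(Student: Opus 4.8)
The plan is to apply the degree-theoretic tools assembled above to the gradient map $f=Dv$, whose Jacobian determinant is exactly $\det Df=\det D^2v$. The case $r=0$ is trivial since both sides vanish, so assume $r>0$; then $B_r$ is open with Lipschitz boundary and $f\in C^1(\overline{B_r};\R^2)$. Since $f(\partial B_r)$ is the image of a $C^1$ curve it has vanishing $\L^2$-measure, so the degree function $g:=\deg(\cdot,B_r,f)$ is defined $\L^2$-almost everywhere; it is integer-valued and vanishes outside a bounded set (the unbounded component of $\R^2\setminus f(\partial B_r)$), hence lies in $L^1(\R^2)$. By \eqref{eq:12} with $n=2$,
\[
|Dg|(\R^2)=\int_{\partial B_r}|Df^\parallel|\d\H^1\le \int_{\partial B_r}|D^2v|\d\H^1<\infty\,,
\]
because $Df^\parallel$ is a restriction of $Df=D^2v$ (so its norm is pointwise no larger); thus $g\in BV(\R^2)$.

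Next I would identify $\int_{\R^2}g\d x$. Applying the identity $\int_{\R^n}\deg(y,U,f)\mu=\int_U f^*\mu$, valid for $f\in C^1(\overline U;\R^n)$ and $n$-forms $\mu$ with $L^\infty$ coefficients, to $U=B_r$, $n=2$, and the Lebesgue volume form $\mu=\d x_1\wedge \d x_2$ (for which $f^*\mu=\det Df\,\d x_1\wedge\d x_2$), yields
\[
\int_{\R^2}g\d x=\int_{B_r}\det D^2v\d x\,.
\]
Since $g$ takes values in $\Z$, one has $|g|\le g^2$ pointwise, so
\[
\left|\int_{B_r}\det D^2 v\d x\right|=\left|\int_{\R^2}g\d x\right|\le\int_{\R^2}|g|\d x\le \int_{\R^2}g^2\d x=\|g\|_{L^2(\R^2)}^2\,.
\]

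Finally I would invoke the Sobolev inequality \eqref{eq:25} for $BV$ functions with $n=2$, whose constant is $C(2)=(2\omega_2^{1/2})^{-1}=(2\sqrt\pi)^{-1}$, so $C(2)^2=1/(4\pi)$. Combining with the previous two displays,
\[
\left|\int_{B_r}\det D^2 v\d x\right|\le \|g\|_{L^2}^2\le C(2)^2|Dg|(\R^2)^2\le \frac{1}{4\pi}\left(\int_{\partial B_r}|D^2v|\d\H^1\right)^2\,,
\]
and taking square roots gives the assertion. The only points requiring some care are the bookkeeping for the degree function (that $g\in BV$ and that the change-of-variables identity applies, which uses that $\partial B_r$ is Lipschitz), the pointwise bound $|Df^\parallel|\le|D^2v|$ coming from the norm conventions fixed above, and the elementary observation $|g|\le g^2$ for $\Z$-valued $g$, which is exactly what converts the signed integral $\int_{B_r}\det D^2v$ into the $L^2$-norm controlled by the Sobolev inequality. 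I do not anticipate a genuine obstacle: the lemma is, in essence, the $n=2$ isoperimetric/Sobolev inequality applied to the Brouwer degree of $Dv$.
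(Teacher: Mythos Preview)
Your proposal is correct and follows essentially the same approach as the paper: apply \eqref{eq:12} to $f=Dv$ to bound $|D(\deg(\cdot,B_r,Dv))|(\R^2)$ by the boundary integral, use the change-of-variables identity to rewrite $\int_{B_r}\det D^2v$ as $\int_{\R^2}\deg$, exploit the fact that the degree is integer-valued to pass from $L^1$ to $L^2$, and then apply the $n=2$ Sobolev inequality \eqref{eq:25} with sharp constant $C(2)^2=1/(4\pi)$. The paper presents the same steps in a slightly different order (Sobolev first, then $|\deg|^2\ge|\deg|$, then change of variables) and does not spell out the $r=0$ case or the bound $|Df^\parallel|\le|D^2v|$, but the argument is the same.
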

\begin{proof}
  We write $w:=Dv:B_1\to \R^2$. By approximation with smooth functions, we may
  assume $w\in C^\infty(B_1;\R^2)$. By \eqref{eq:12}, we have

  \begin{equation}
\begin{split}
  |D(\deg(\cdot,B_r,w))|(\R^2)\leq&\int_{\partial B_r}|Dw|\d\H^1\,.
\end{split}\label{eq:16}
\end{equation}
By \eqref{eq:25}, we have
\begin{equation}
  \begin{split}
    |D(\deg(\cdot,B_r,w))|(\R^2)\geq& \left(4\pi\int_{\R^2}\left|\deg(y,B_r,w)\d y\right|^2\right)^{1/2}\\
\geq    & \left(4\pi\left|\int_{\R^2}\deg(y,B_r,w)\d y\right|\right)^{1/2}\\
    = & \left(4\pi\left|\int_{B_r}\det D w\d x\right|\right)^{1/2}\,,
  \end{split}\label{eq:24}
\end{equation}
where we used the fact $\deg(x,B_r,w)\in\Z$, and hence $|\deg(x,B_r,w)|^2\geq |\deg(x,B_r,w)|$ for almost every $x$, in the second inequality.
Combining \eqref{eq:16} and \eqref{eq:24} yields the claim of the lemma.
%Combining \eqref{eq:16} with \eqref{eq:15} yields the claim of the lemma.
\end{proof}

\section{Proof of Theorem \ref{thm:main1}}
\label{sec:proof-theor-refthm:m}
\begin{lemma}
We have
\label{lem:upperbound1}
  \begin{equation*}
  \inf_{y\in W^{2,2}(B_1;\R^3)} \Ihd(y)\leq 2\pi\Delta^2h^2\left(|\log h|+C\right)\,.
\end{equation*}
\end{lemma}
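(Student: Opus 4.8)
The plan is to exhibit an explicit competitor $y$ and estimate $\Ihd(y)$ directly. The natural candidate is a smoothed version of the singular cone $\ydel$: away from a disk of radius $\rho\sim h$ around the origin we take $y=\ydel$, which is an exact isometric immersion (so the membrane term vanishes there) but has bending energy density $h^2|D^2\ydel|^2\sim h^2\Delta^2/|x|^2$; integrating this over the annulus $\{\rho<|x|<1\}$ produces the leading term $2\pi\Delta^2 h^2|\log\rho|\sim 2\pi\Delta^2 h^2|\log h|$. Inside the small disk $B_\rho$ we must replace the cone tip by a regular cap; the point of the whole construction is that such a cap can be built with bending energy $O(h^2)$ and membrane energy $O(h^2)$ on $B_\rho$, i.e.\ contributing only to the $O(h^2)=o(h^2|\log h|)$ error absorbed into the constant $C$.

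Concretely I would first set $\rho=h$ (or any fixed multiple) and define $y$ on the annulus to equal $\ydel$. On $B_\rho$ I would interpolate between the flat disk and the cone: e.g.\ write $\ydel$ in the form $\ydel(x)=\sqrt{1-\Delta^2}\,x+\Delta|x|e_3$ and replace the non-smooth profile $|x|\mapsto \Delta|x|$ by a smooth function $f_\rho(|x|)$ that agrees with $\Delta|x|$ and all its derivatives at $|x|=\rho$, is smooth at $0$, and satisfies $|f_\rho'|\le \Delta$, $|f_\rho''|\lesssim 1/\rho$. Thus set $y(x)=\bigl(\sqrt{1-\Delta^2}\,x,\ f_\rho(|x|)\bigr)$ on $B_\rho$. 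Then on $B_\rho$ one has $|D^2y|\lesssim 1/\rho$, so $\int_{B_\rho}h^2|D^2y|^2\lesssim h^2\rho^2/\rho^2=h^2\rho^0\cdot(\text{const})=O(h^2)$ once $\rho\sim h$; and $|g_y-\gd|\lesssim 1$ pointwise (both metrics are bounded perturbations of the identity), so $\int_{B_\rho}|g_y-\gd|^2\lesssim \rho^2=O(h^2)$. One must check $y\in W^{2,2}(B_1;\R^3)$, which holds because the matching at $|x|=\rho$ is $C^1$ (indeed $C^\infty$) by construction and each piece is smooth.

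The main contribution is then the annular integral
\begin{equation*}
\int_{\{\rho<|x|<1\}}h^2|D^2\ydel|^2\,\d\L^2 = h^2\int_\rho^1\!\!\int_0^{2\pi}|D^2\ydel|^2\,|x|\,\d\varphi\,\d|x|\,.
\end{equation*}
A direct computation gives $|D^2\ydel(x)|^2=\Delta^2/|x|^2$ (the only second derivatives come from the $\Delta|x|e_3$ term, and $D^2|x| = (\id-\hat x\otimes\hat x)/|x|$ has squared norm $1/|x|^2$), so the integral equals $2\pi\Delta^2 h^2\log(1/\rho)=2\pi\Delta^2 h^2|\log h|$ when $\rho=h$. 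Collecting everything, $\Ihd(y)\le 2\pi\Delta^2 h^2|\log h|+O(h^2)\le 2\pi\Delta^2 h^2(|\log h|+C)$ for small $h$, which is the claim.

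The only genuinely delicate point is the construction of the smooth profile $f_\rho$ with the correct derivative bounds and the verification that the cap contributes only $O(h^2)$ to both energy terms; everything else is a routine computation. A convenient way to handle the cap is to note that after rescaling $x\mapsto x/\rho$ the cap problem becomes $\rho$-independent (the energies scale so that the membrane term picks up $\rho^2$ and the bending term picks up $\rho^2(h/\rho)^2=h^2$), so it suffices to fix once and for all a single smooth function interpolating between the flat disk near $0$ and the unit cone near radius $1$, with bounded second derivatives; its finite energy then yields the $O(h^2)$ bound after scaling. I would present the estimate in this scaled form to keep the bookkeeping clean.
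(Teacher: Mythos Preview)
Your proposal is correct and follows essentially the same strategy as the paper: both take the competitor equal to $\ydel$ on the annulus $\{h<|x|<1\}$ and replace the cone tip on $B_h$ by a smooth cap whose membrane and bending contributions are each $O(h^2)$, so that the annular bending integral $2\pi\Delta^2 h^2|\log h|$ is the only leading term. The paper's specific cap is $y_0(x)=\eta(|x|/h)\,\ydel(x)$ for a fixed cutoff $\eta\in C^\infty([0,\infty))$, whereas you keep the in-plane part $\sqrt{1-\Delta^2}\,x$ and smooth only the vertical profile $\Delta|x|$; both constructions give identical estimates, and your rescaling remark is exactly the right way to make the cap bounds clean.
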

\begin{proof}
Recall the definition of $\ydel$,
\[
\ydel(x)=\sqrt{1-\Delta^2} x+  \Delta |x|e_3\,.
\]
 Let  $\eta\in C^\infty([0,\infty))$ with $\eta(t)=0$ for $t\leq \frac12$, $\eta(t)=1$ for $t\geq
  1$ and  $|\eta'|\leq 4$, $|\eta''|\leq 8$. 
We set 
\[
y_0(x)=\eta(|x|/h) \ydel(x)\,.
\]
We compute
\[
\begin{split}
  D y_0(x)=&\frac{\eta'(x/h)}{h}\ydel\otimes \hat
  x+\eta(x/h)\sqrt{1-\Delta^2}\left(\begin{array}{ccc}1 & 0 &
      0\\0&1&0\end{array}\right)^T+\Delta e_3\otimes \hat x\,,\\
  D^2 y_0(x)=&\frac{\eta''(x/h)}{h^2}\ydel\otimes \hat x\otimes\hat x
  + \frac{\eta'(x/h)}{h}\Bigg[\sqrt{1-\Delta^2}(2\hat x\otimes \hat x\otimes \hat
    x+\hat x^\bot\otimes \hat x\otimes \hat x^\bot\\
&+\hat x^\bot \otimes \hat
    x^\bot\otimes \hat x)
+2\Delta e_3\otimes \hat x\otimes \hat x+\frac{1}{|x|}\ydel\otimes
   \hat x^\bot\otimes\hat x^\bot\Bigg]\\
 & +\frac{\eta(x/h)}{|x|}\Delta e_3\otimes \hat x^\bot\otimes \hat x^\bot\,.
\end{split}
\]
On $B_1\setminus B_h$, we have $y_0=\ydel$, $\eta'(|\cdot|/h)=0$, and hence $Dy_0^TDy_0=(D\ydel)^TD\ydel$ and $D^2y_0= |x|^{-1}\Delta e_3\otimes \hat x^\bot\otimes \hat x^\bot$.
On $B_h$, we see from the formulas above that $|Dy_0|\leq C$ and $|D^2y_0|\leq
Ch^{-1}$. (To see the latter, one uses $|\ydel|\leq h$ on $B_h$.)
Hence the energy can be estimated by
\[
\begin{split}
  I_h(y_0)\leq &\int_{B_h} \left(C+h^2 (Ch^{-1})^2)\right)\d x
  +h^2\int_{B_1\setminus B_h}|x|^{-2}\Delta^2\d x\\
  \leq &2\pi\Delta^2h^2(|\log h|+C)\,.
\end{split}
\]
This proves the lemma.
\end{proof}

For the following definition, recall the formulation of the Hessian in its very
weak form, equation \eqref{eq:23}.

\begin{definition}
 Let $i\in\{1,2,3\}$.  By $\det D^2\ydel_i$, we denote the distribution
\[
\varphi\mapsto-\frac12\int_{B_1}\d\varphi\wedge\left(\ydel_{i,1}\d \hat
  y_{i,2}-\ydel_{i,1}\d \ydel_{i,2}\right)\,.
\]
\end{definition}
\begin{lemma}
\label{lem:hatydist}
  The distributions $\det D^2\ydel_i$, $i=1,2,3$, can be extended to  Radon
  measures. Denoting the latter  with the same symbol, we have
\[
\begin{split}
  \det D^2\ydel_i=&0\quad\text{ for } i=1,2\\
  \det D^2\ydel_3=&\Delta^2\pi\delta_0\,,
\end{split}
\]
where $\delta_0$ is the Dirac distribution $\delta_0:\varphi\mapsto \varphi(0)$.
Furthermore, we have
  
\begin{equation}
  \det D^2 \ydel_i=(\ydel_{i,1}\cdot \ydel_{i,2})_{,12}-\frac12 (|\ydel_{i,1}|^2)_{,22}-\frac12(|\ydel_{i,2}|^2)_{,11}\label{eq:1}
  \end{equation}
  in the sense of distributions.
\end{lemma}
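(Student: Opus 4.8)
The plan is to compute the explicit map $\ydel$, verify that the classical Hessian determinant vanishes away from the origin, and then carefully extract the Dirac mass at the origin via the very weak form \eqref{eq:23}. First I would observe that $\ydel\in W^{2,2}(B_1;\R^3)\cap C^\infty(B_1\setminus\{0\};\R^3)$, and on $B_1\setminus\{0\}$ compute the second derivatives directly. The first two components $\ydel_1=\sqrt{1-\Delta^2}\,x_1$ and $\ydel_2=\sqrt{1-\Delta^2}\,x_2$ are affine, so $D^2\ydel_1=D^2\ydel_2=0$ pointwise away from the origin, and since these components are globally Lipschitz (in fact linear) the distributions $\det D^2\ydel_i$ defined via the formula in the Definition vanish identically for $i=1,2$; this follows because $\ydel_{i,1}$ and $\ydel_{i,2}$ are constants, so the one-form $\ydel_{i,1}\d\ydel_{i,2}$ is exact (a constant times $\d\ydel_{i,2}$) and $\d\varphi\wedge\d(\text{const})=0$. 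For $i=3$, $\ydel_3=\Delta|x|$, whose classical Hessian on $B_1\setminus\{0\}$ is $\Delta|x|^{-1}\hat x^\bot\otimes\hat x^\bot$, which has zero determinant pointwise — so all the mass of $\det D^2\ydel_3$ is concentrated at the origin.

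Next I would evaluate the distributional pairing $\langle \det D^2\ydel_3,\varphi\rangle = -\tfrac12\int_{B_1}\d\varphi\wedge\big(\ydel_{3,1}\,\d\ydel_{3,2}-\ydel_{3,1}\,\d\ydel_{3,2}\big)$ — here I suspect there is a typo in the Definition and the intended integrand is $\ydel_{3,1}\,\d\ydel_{3,2}-\ydel_{3,2}\,\d\ydel_{3,1}$, which is twice the relevant piece of the very weak Hessian after an integration by parts; in any case the correct object to integrate is $-\tfrac12\d\varphi\wedge(w_1\,\d w_2 - w_2\,\d w_1)$ with $w=D\ydel_3$. I would cut out a small disc $B_\rho$, integrate by parts on $B_1\setminus B_\rho$ (where everything is smooth and the pointwise determinant vanishes), and track the boundary term on $\partial B_\rho$. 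Using $D\ydel_3 = \Delta\hat x^\bot$ on $\partial B_\rho$ and parametrizing by the angle, the boundary integral reduces to $\varphi(0)\cdot\tfrac12\oint_{\partial B_\rho}(w_1\,\d w_2 - w_2\,\d w_1) + o(1)$ as $\rho\to0$; since $w=\Delta(-\sin\theta,\cos\theta)$ traces a circle of radius $\Delta$, the integral $\tfrac12\oint (w_1\,dw_2-w_2\,dw_1)$ equals the enclosed area $\pi\Delta^2$, giving $\langle\det D^2\ydel_3,\varphi\rangle = \Delta^2\pi\,\varphi(0)$, i.e. $\det D^2\ydel_3 = \Delta^2\pi\delta_0$. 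This simultaneously shows the distribution is a (nonnegative multiple of a) Radon measure.

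Finally, for the identity \eqref{eq:1} I would argue that the distribution defined in the Definition agrees with $(\ydel_{i,1}\cdot\ydel_{i,2})_{,12} - \tfrac12(|\ydel_{i,1}|^2)_{,22} - \tfrac12(|\ydel_{i,2}|^2)_{,11}$: both sides are purely distributional expressions that are bilinear/quadratic in first derivatives of $\ydel_i$, which lie in $W^{1,2}\cap L^\infty$ on $B_1$ (away from the origin they are smooth and bounded, and the singularity at a point is removable for $W^{1,2}$ products in two dimensions), so one may integrate by parts the right-hand side against a test function and rearrange exactly as in the derivation of \eqref{eq:23}; the boundary-at-the-puncture terms vanish because $|\partial B_\rho|\to0$ while the integrands stay bounded. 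The main obstacle I anticipate is the bookkeeping in the integration by parts near the puncture: one must justify that the products of derivatives are genuinely in the right function spaces so that Stokes' theorem applies on $B_1\setminus B_\rho$, and that the $\rho\to0$ limit of the boundary term isolates precisely $\varphi(0)$ times the area enclosed by the curve $\theta\mapsto D\ydel_3(\rho,\theta)$ — a fact that is geometrically the statement that $\det D^2$ of a cone records the solid angle deficit, but which requires care because $D\ydel_3$ does not extend continuously to the origin.
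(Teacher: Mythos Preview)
Your proposal is correct and follows essentially the same route as the paper: for $i=1,2$ you note that the one-form $\ydel_{i,1}\d\ydel_{i,2}-\ydel_{i,2}\d\ydel_{i,1}$ vanishes because the partials are constants, and for $i=3$ you identify the form with $\Delta^2$ times the angular one-form and extract the Dirac mass --- the paper simply cites this as the ``well-known'' distributional derivative of $\frac{x_1\,\d x_2 - x_2\,\d x_1}{|x|^2}$, whereas you spell it out via excising $B_\rho$ and the shoelace/area formula, which is exactly how that well-known fact is proved; the treatment of \eqref{eq:1} by a single integration by parts is likewise the same. One small slip: $D\ydel_3=\Delta\hat x$, not $\Delta\hat x^\bot$ (since $\ydel_3=\Delta|x|$), though this does not change the enclosed-area computation.
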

\begin{proof}
For $i=1,2$, the form $\ydel_{i,1}\d\ydel_{i,2}-\ydel_{i,1}\d \ydel_{i,2}$ vanishes almost
  everywhere. Furthermore, we compute
\begin{equation}
\begin{split}
  \ydel_{3,1}\d \hat
  y^3_{,2}-\ydel_{3,1}\d \hat
  y^3_{,2}=&\Delta^2\Bigg[
\frac{x_1}{|x|}\left(-\frac{x_1x_2}{|x|^3}\d
    x_1+\frac{x_1^2}{|x|^3}\d x_2\right)\\
&-\frac{x_2}{|x|}\left(-\frac{x_1x_2}{|x|^3}\d
    x_2+\frac{x_2^2}{|x|^3}\d x_1\right)\Bigg]\\
=&\Delta^2\left[ \frac{x_1}{|x|^2}\d x_2-\frac{x_2}{|x|^2}\d x_1\right]\,.
\end{split}\label{eq:2}
\end{equation}
It is well known that the differential of  the right hand side is just
$-\Delta^22\pi\delta_0 \d x_1\wedge \d x_2$.
% Hence for $\varphi\in C_0^\infty(B_1)$, we 
% get
% \[
% \left<\sum_i\det D^2 y^i,\varphi\right>=\pi\Delta^2\varphi(0)\,.
% \]
This proves the first claim of the lemma. The  claim \eqref{eq:1}  follows
from a simple integration by parts,
\[
\begin{split}
 \Bigg<(&\ydel_{i,1} \ydel_{i,2})_{,12}-\frac12 (|\ydel_{i,1}|^2)_{,22}-\frac12(|\ydel_{i,2}|^2)_{,11},\,\varphi\Bigg>\\
  &= \frac12 \int_{B_1}\left(\varphi_{,1}\left((\ydel_{i,1} \ydel_{i,2})_{,2}-(|\ydel_{i,2}|^2)_{,1}\right)+\varphi_{,2}\left( (\ydel_{i,1}
  \ydel_{i,2})_{,1}- (|\ydel_{i,1}|^2)_{,2}\right)\right)\d x\\
&=\left<\det D^2 \ydel_i,\varphi\right>\,.
\end{split}
\]
This completes the proof of the lemma.
\end{proof}
For a given deformation $y\in C^2(B_1;\R^3)$,  let $\kappa_y$ be defined by

\begin{equation}
\begin{split}
  \kappa_y(\rho)=&\left<\sum_{i=1}^3\det D^2 y_i,\chi_{B_{\rho}}\right>\\
=&\int_{B_\rho}\sum_{i=1}^3 \det D^2 y_i\,\d x\\
=& \sum_{i=1}^3 \frac12\int_{\partial B_\rho} y_{i,1}\d y_{i,2}-y_{i,1}\d y_{i,2} \,.
\end{split}\label{eq:14}
\end{equation}

\begin{proposition}
\label{prop:interpol1}
Let $y\in C^2(B_1;\R^3)$ with 
\[
\int_{B_{1-5h}\setminus B_{5h}}|g_y-g_\Delta|^2+h^2|D^2y|^2\d\L^2\leq C h^2|\log h|\,.
\]
Then 
\[
\|\kappa_y-\pi\Delta^2\|_{L^1(6h,R)}\leq C h^{1/2}R^{1/2}|\log h|^{3/4}\quad
\text{ for all }R\in [2h,1-5h]\,.
\]
\end{proposition}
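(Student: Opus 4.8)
The plan is to convert the two halves of the hypothesis into an $L^2$-bound on the metric defect and an $L^1$-bound on the distributional Hessian, and then to estimate $\kappa_y(\rho)-\pi\Delta^2$ by testing the identity $\sum_i\det D^2y_i-\sum_i\det D^2\ydel_i=\mathcal N(g_y-g_\Delta)$ against suitable regularizations of $\chi_{B_\rho}$. Concretely, set $\Omega:=B_{1-5h}\setminus B_{5h}$, $f:=\sum_{i=1}^3\det D^2y_i$, and $G:=g_y-g_\Delta$ with entries $G_{jk}$ (so $G_{11}=|y_{,1}|^2-|\ydel_{,1}|^2$, $G_{12}=y_{,1}\cdot y_{,2}-\ydel_{,1}\cdot\ydel_{,2}$, $G_{22}=|y_{,2}|^2-|\ydel_{,2}|^2$). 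From the hypothesis one reads off $\|G\|_{L^2(\Omega)}\le Ch|\log h|^{1/2}$ and $\int_\Omega|D^2y|^2\,\d\L^2\le C|\log h|$, and since $|\det D^2y_i|\le\tfrac12|D^2y_i|^2$ this last bound gives $\|f\|_{L^1(\Omega)}\le C|\log h|$. Summing \eqref{eq:23} over $i$, subtracting the sum of \eqref{eq:1} over $i$, and invoking Lemma \ref{lem:hatydist}, one gets, as distributions on $B_1$,
\[
f-\pi\Delta^2\delta_0=(G_{12})_{,12}-\tfrac12(G_{11})_{,22}-\tfrac12(G_{22})_{,11}=:\mathcal N(G)\,.
\]
Since $f\in L^1_{\loc}(B_1)$, the left-hand side is a Radon measure, so by \eqref{eq:14} one has $\kappa_y(\rho)-\pi\Delta^2=\langle f-\pi\Delta^2\delta_0,\chi_{B_\rho}\rangle$ for every $\rho\in(0,1)$. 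We may assume $R>6h$, since otherwise $(6h,R)$ is empty.

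The heart of the argument is a single interpolation-type inequality. For fixed $\rho$ I would pick a one-sided annular shell $S_\rho$ of width $w_\rho$ adjacent to $\partial B_\rho$ and contained in $\Omega$, together with a radial $\psi_\rho\in C^\infty_c(B_1)$ with $\psi_\rho=\chi_{B_\rho}$ off $S_\rho$, $0\le\psi_\rho\le1$, and $\|D^2\psi_\rho\|_{L^\infty}\le Cw_\rho^{-2}$ (possible because on $S_\rho$ one has $|x|\gtrsim\rho\gtrsim w_\rho$, so the term $|\psi_\rho'|/|x|$ in $|D^2\psi_\rho|$ is dominated by $w_\rho^{-2}$). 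Then $\psi_\rho(0)=1$ and $\chi_{B_\rho}-\psi_\rho$ is supported in $S_\rho$ with $|\chi_{B_\rho}-\psi_\rho|\le1$, so
\[
\kappa_y(\rho)-\pi\Delta^2=\langle\mathcal N(G),\psi_\rho\rangle+\int_{S_\rho}(\chi_{B_\rho}-\psi_\rho)f\,\d\L^2\,,
\]
and integrating by parts twice in the first term (legitimate since $\psi_\rho\in C^\infty_c(B_1)$ and $D^2\psi_\rho$ is supported in $S_\rho$) yields
\[
|\kappa_y(\rho)-\pi\Delta^2|\le Cw_\rho^{-2}\|G\|_{L^1(S_\rho)}+\|f\|_{L^1(S_\rho)}\,.
\]
This makes precise the introduction's heuristic: $f-\pi\Delta^2\delta_0$ is a second derivative of the membrane-small quantity $G$, while $f$ itself is controlled in $L^1$ by the bending energy; the shell width $w_\rho$ interpolates between the two.

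It then remains to integrate this over $\rho\in[6h,R]$ and to choose $w_\rho$. I would take $\ell:=(hR)^{1/2}|\log h|^{-1/4}$ (so $\ell\le h^{1/2}$ and $\ell\le R$ for small $h$). On the main range $\rho\in[6h,\min(R,1-5h-\ell)]$ I use the outward shell $S_\rho:=B_{\rho+w_\rho}\setminus B_\rho$ with $w_\rho:=\min(\rho,\ell)$; this keeps $S_\rho\subset B_{1-5h}\setminus B_{6h}\subset\Omega$ and guarantees $w_\rho\le\rho$. Integrating and applying Fubini — for fixed $x$, the admissible $\rho$ with $x\in S_\rho$ form a set of length $\le2\ell$, with $\int_{\{\rho:\,x\in S_\rho\}}w_\rho^{-2}\,\d\rho\le C|x|^{-1}$ when $|x|<2\ell$ and $\le C\ell^{-1}$ when $|x|\ge2\ell$, while the admissible $x$ lie in $B_{2R}\cap\Omega$ — gives
\[
\int_{6h}^{\min(R,1-5h-\ell)}|\kappa_y(\rho)-\pi\Delta^2|\,\d\rho\le C\ell^{-1}\|G\|_{L^1(B_{2R}\cap\Omega)}+C\big\|\,|x|^{-1}\big\|_{L^2(B_{2\ell}\setminus B_{6h})}\|G\|_{L^2(\Omega)}+C\ell\|f\|_{L^1(\Omega)}\,.
\]
With $\|G\|_{L^1(B_{2R}\cap\Omega)}\le\|G\|_{L^2(\Omega)}|B_{2R}|^{1/2}\le CRh|\log h|^{1/2}$ and $\big\|\,|x|^{-1}\big\|_{L^2(B_{2\ell}\setminus B_{6h})}^2=2\pi\log\bigl(\tfrac{\ell}{3h}\bigr)\le C|\log h|$, the first and third terms become $Ch^{1/2}R^{1/2}|\log h|^{3/4}$ by direct substitution of $\ell$, while the middle term equals $Ch|\log h|$ and is present only when $2\ell>6h$, i.e. when $R>9h|\log h|^{1/2}$, in which case $h|\log h|\le\tfrac13h^{1/2}R^{1/2}|\log h|^{3/4}$. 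When $R>1-5h-\ell$ there remains the range $\rho\in(1-5h-\ell,R]$, of length $\le\ell$ and forcing $R\sim1$; there I switch to the inward shell $S_\rho:=B_\rho\setminus B_{\rho-\ell}$ (still inside $\Omega$, since $\rho-\ell\ge5h$) with $w_\rho=\ell$, and the same Fubini estimate bounds its contribution by $C\ell^{-1/2}h|\log h|^{1/2}+C\ell|\log h|\le Ch^{1/2}R^{1/2}|\log h|^{3/4}$. Summing the two ranges proves the proposition.

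I expect the main obstacle to be exactly these endpoint effects. Near $\partial B_{5h}$ the bending energy is not controlled, so the cutoff scale cannot be kept at $\ell$ but must be allowed to shrink to $w_\rho\sim\rho$, which is what produces the borderline logarithm hidden in $\int_{B_{2\ell}\setminus B_{6h}}|G|/|x|$; near $\partial B_{1-5h}$ an outward shell would leave $\Omega$, forcing an inward one. In both cases the remedy is to choose the shell adaptively and to exploit that the troublesome range of $\rho$ is empty unless $R$ is correspondingly large — which is precisely what is needed to absorb the extra factors into $Ch^{1/2}R^{1/2}|\log h|^{3/4}$. The genuinely easy part is the identity in the second paragraph: once it is in place, everything reduces to $L^1$-estimates for $G$ and $f$ on thin shells together with the optimization of the single scale $\ell$.
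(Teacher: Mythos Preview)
Your argument is correct, but it takes a genuinely different route from the paper's. The paper never estimates $|\kappa_y(\rho)-\pi\Delta^2|$ pointwise. Instead it introduces the antiderivative
\[
F(r)=\int_{h_0}^r\big(\kappa_y(\rho)-\pi\Delta^2\big)\,\d\rho
=\int_{h_0}^r\d\rho\int_{B_\rho}\mathcal N(G)\,\d\L^2,
\]
with $h_0\in[5h,6h]$ chosen by Fubini so that $\int_{\partial B_{h_0}}|G|^2\,\d\H^1\le Ch|\log h|$. Two successive applications of the divergence theorem turn the iterated integral $\int_{h_0}^r\d\rho\int_{B_\rho}G_{kl,kl}$ into boundary and volume integrals of $G$ itself (no derivatives left), yielding directly $\|F\|_{L^1(h_0,R)}\le ChR|\log h|^{1/2}$. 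Since $F''(\rho)=\int_{\partial B_\rho}f$, the bending bound gives $\|F''\|_{L^1}\le C|\log h|$, and then the one–dimensional interpolation inequality $\|F'\|_{L^1}\le C\|F\|_{L^1}^{1/2}\|F''\|_{L^1}^{1/2}$ finishes the proof in one line.

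So the paper hides the scale optimization inside the standard 1D Gagliardo–Nirenberg inequality and trades your adaptive shells for a single good slice $\partial B_{h_0}$ plus a clean divergence-theorem identity. Your approach makes the interpolation explicit: the scale $w_\rho$ is exactly the parameter balancing the $W^{-2,1}$ control of $\mathcal N(G)$ against the $L^1$ control of $f$, and your choice $\ell=(hR)^{1/2}|\log h|^{-1/4}$ reproduces the same exponents. The price you pay is the endpoint bookkeeping (adaptive $w_\rho=\min(\rho,\ell)$ near $6h$, inward shells near $1-5h$); the paper's price is the pigeonhole choice of $h_0$ and the somewhat opaque double-Gauss computation \eqref{eq:44}. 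Both are legitimate, and your version is arguably more transparent about \emph{why} the exponent $3/4$ appears.
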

\begin{remark}
We could prove Theorem \ref{thm:main1} via  Propositions \ref{prop:interpol1}
and \ref{prop:diadicannuli} also if we made the stronger assumption $\Ihd(y)\leq
C_1h^2|\log h|$ in Proposition \ref{prop:interpol1}. However, the weaker
assumption will be convenient in the proof of the result on  three-dimensional
elasticity, Theorem \ref{thm:3d}.
\end{remark}
\begin{proof}
Let
$h_0=h_0(y)\in[5h,6h]$ be chosen such that
\[
\int_{\partial B_{h_0}}|g_y-\gd|^2\d\H^1=\int_{\partial B_{h_0}}|Dy^TDy-(D\ydel)^TD\ydel|^2\d \H^1\leq C h|\log h|\,.
\]
For $r\in[h_0,1]$, we set
\[
\begin{split}
  F_1(r)=&\int_{h_0}^r\kappa_y(\rho)\d\rho\,,\\
  F_2(r)=&(r-{h_0})\pi\Delta^2\,,\\
F=&F_1-F_2\,.
\end{split}
\]
By these definitions,

\[
  F(r)
=\int_{h_0}^r \sum_{i=1}^3\left<\det D^2y_i-\det D^2 \ydel_i,\chi_{B_\rho}\right>\d\rho\,.
\]
Now we 
 set
\[
u_{ij}=(Dy^TDy-(D\ydel)^TD\ydel)_{ij}\,.
\]

By the assumption $y\in C^2(B_1;\R^3)$ and Lemma \ref{lem:hatydist}, we have
\[
F(r)=\int_{h_0}^r\d\rho\int_{B_\rho}\left[u_{12,12}-\frac12 u_{11,22}-\frac12 u_{22,11}\right]\d
  \L^2\,.
\]

Let $k,l\in\{1,2\}$, and 
$f\in \{u_{11},u_{12},u_{22}\}$. 
We repeatedly apply Gauss' Theorem,

\begin{equation}
\begin{split}
  \int_{h_0}^r\d\rho\int_{B_\rho}f_{,kl}(x)\d x=&
  \int_{h_0}^r\d\rho\int_{\partial B_\rho}\frac{x_l}{|x|}f_{,k}(x)\d\H^1(x)
  \\
  =& \int_{B_r\setminus B_{h_0}} \frac{x_l}{|x|}f_{,k}(x)\d
  x\\
  =& \int_{\partial (B_r\setminus B_{h_0})}
  \frac{x_k}{|x|}\frac{x_l}{|x|}f(x)\d\H^1(x)-\int_{B_r\setminus
    B_{h_0}} \left(\frac{x_l}{|x|}\right)_{,k}f(x)\d x
\end{split}\label{eq:44}
\end{equation}
We will estimate the $L^1$ norm of 
the function $G:r\mapsto \int_{h_0}^r\d\rho\int_{B_\rho}f_{,kl}(x)\d x$. By the
triangle inequality and \eqref{eq:44}, we have
\[
\begin{split}
  \|G\|_{L^1(h_0,R)}\leq &\int_{h_0}^R\left|\int_{\partial B_r}
    \frac{x_k}{|x|}\frac{x_l}{|x|}f(x)\d\H^1(x)\right|\d r\\
  &+\int_{h_0}^R\left|\int_{\partial B_{h_0}}
    \frac{x_k}{|x|}\frac{x_l}{|x|}f(x)\d\H^1(x)\right|\d r\\
  &+\int_{h_0}^R\left|\int_{B_r\setminus B_{h_0}}
    \left(\frac{x_l}{|x|}\right)_{,k}f(x)\d x\right|\d r
\end{split}
\]
We estimate the terms appearing on the right hand side
using the Cauchy-Schwarz inequality. For the first one, we have
\[
\begin{split}
  \int_{h_0}^R \left|\int_{\partial B_r}
    \frac{x_k}{|x|}\frac{x_l}{|x|}f(x)\d\H^1(x)\right|\d r&\leq
  \int_{B_R\setminus B_{h_0}}\left|\frac{x_k}{|x|}\frac{x_l}{|x|}f(x)\right|\d\L^2(x)\\
  \leq & \left(\int_{B_R\setminus B_{h_0}}|f|^2\d\L^2\right)^{1/2}(\pi
  (R^2-h_0^2)^{1/2}
\end{split}
\]
Secondly, we have
\[
\begin{split}
  \int_{h_0}^R \left|\int_{\partial B_{h_0}}
    \frac{x_k}{|x|}\frac{x_l}{|x|}f(x)\d\H^1(x)\right|\d r&= 
  (R-h_0)\left|\int_{\partial B_{h_0}}
    \frac{x_k}{|x|}\frac{x_l}{|x|}f(x)\d\H^1(x)\right|\\
  &\leq (R-h_0) \left(\int_{\partial B_{h_0}}
    |f|^2\d\H^1\right)^{1/2}\left(2\pi h_0\right)^{1/2}
\end{split}
\]
For the third term,
\[
\begin{split}
  \int_{h_0}^R \left|\int_{B_r\setminus B_{h_0}}
    \left(\frac{x_l}{|x|}\right)_{,k}f(x)\d x\right|\d r\leq &
  (R-h_0)\int_{B_R\setminus B_{h_0}}\frac{|f(x)|}{|x|}\d\L^2(x)\\
  \leq & (R-h_0)\left(\int_{B_R\setminus
      B_{h_0}}|f|^2\d\L^2\right)^{1/2}\left(\int_{B_R\setminus
      B_{h_0}}|x|^{-2}\d\L^2(x)\right)^{1/2}\\
  \leq & (R-h_0)\left(\int_{B_R\setminus B_{h_0}}|f|^2\d\L^2\right)^{1/2}|\log
  h_0|^{1/2}
\end{split}
\]

Hence, writing $|u|:=|Dy^TDy-(D\ydel)^TD\ydel|$, we get
\[
\begin{split}
  \int_{h_0}^R|F(\rho)|\d\rho\leq & (R-h_0)\Bigg[\left(\int_{\partial
        B_{h_0}}|u|^2\d\H^1\right)^{1/2}\left(2\pi
      h_0\right)^{1/2}\\
      &+\left(\int_{B_R\setminus B_{h_0}}|u|^2\d
        \L^2\right)^{1/2}\left(\int_{B_R\setminus B_{h_0}}|x|^{-2}\d\L^2\right)^{1/2}\Bigg]\\
    &+\left(\int_{B_R\setminus B_{h_0}}|u|^2\d\L^2\right)^{1/2}(\pi
    (R^2-h_0^2))^{1/2}\\
    \leq & C h R|\log h|^{1/2}\,.
  \end{split}
\]
Furthermore, we  have the estimate
\[
\begin{split}
  \int_{h_0}^R|F''(\rho)|\d \rho=&\int_{B_R\setminus B_{h_0}}\left|\sum_{i=1}^3\det D^2 y_i\right|\d x  \\
  \leq &
  \int_{B_{1-5h}\setminus B_{5h}}|D^2 y|^2\d x\\
  \leq & C|\log h|\,.
\end{split}
\]

Now the claim of the proposition follows from the standard interpolation
inequality
\[
\|F'\|_{L^1(h_0,R)}\leq C \|F\|_{L^1(h_0,R)}^{1/2}\|F''\|_{L^1(h_0,R)}^{1/2}\,,
\]
see e.g.~\cite{gilbarg2001elliptic}, Theorem 7.28. This completes the proof.
\end{proof}

\begin{proposition}
\label{prop:diadicannuli}
Let $y_i\in C^2(B_1)$ for $i=1,2,3$. Assume that $\alpha>0$ and

\begin{equation}
\|\kappa_y-\pi\Delta^2\|_{L^1(6h,R)}\leq C h^{1/2}R^{1/2}|\log h|^\alpha\quad\text{for all }R\in[6h,1-5h]\,.\label{eq:28}
\end{equation}
Then 
\[
\sum_{i=1}^3 \int_{B_{1-5h}\setminus B_{5h}}|D^2y_i|^2\d x\geq 2\pi\Delta^2 \left(|\log h|-2\alpha \log|\log
  h|-C\right)\,.
\]
\end{proposition}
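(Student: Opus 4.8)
The plan is to exploit the quantitative control \eqref{eq:28} on $\kappa_y$, which says that on average over a scale $r$ the quantity $\int_{B_r}\sum_i\det D^2 y_i$ is close to $\pi\Delta^2$, in order to produce a genuine pointwise lower bound for $\kappa_y(r)$ on a large set of radii $r$. First I would introduce a dyadic decomposition of the interval $[6h,1-5h]$ into annuli $A_j=\{r_{j+1}<|x|<r_j\}$ with, say, $r_j=2^{-j}$, running over roughly $j\in\{1,\dots,N\}$ with $N\sim|\log h|/\log 2$; the role of the logarithmic loss in the assumption \eqref{eq:28} will be to force us to discard the last few scales where $R\sim h$. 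On each such annulus, the average of $|\kappa_y-\pi\Delta^2|$ over $[r_{j+1},r_j]$ is controlled by $\|\kappa_y-\pi\Delta^2\|_{L^1(6h,r_j)}/(r_j-r_{j+1})\leq Ch^{1/2}r_j^{1/2}|\log h|^\alpha / r_j = Ch^{1/2}r_j^{-1/2}|\log h|^\alpha$, so by the mean value property there is a radius $r\in[r_{j+1},r_j]$ with $|\kappa_y(r)-\pi\Delta^2|\leq Ch^{1/2}r_j^{-1/2}|\log h|^\alpha$. For this to be genuinely small (so that $\kappa_y(r)\geq \pi\Delta^2/2$, say) we need $h^{1/2}r_j^{-1/2}|\log h|^\alpha \ll 1$, i.e.\ $r_j\gg h|\log h|^{2\alpha}$; this cuts off the dyadic range at $j\lesssim N - 2\alpha\log_2|\log h| - C$, which is precisely where the $2\alpha\log|\log h|$ term in the conclusion comes from.

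Next, for each of the surviving scales I would use Lemma \ref{lem:isoper} (applied componentwise and summed, or rather applied to each $y_i$ and the inequality $|a|+|b|+|c| \geq (|a|^2+|b|^2+|c|^2)^{1/2}$ used in reverse with Cauchy--Schwarz the other way — more precisely, $\sum_i(4\pi|\int_{B_r}\det D^2 y_i|)^{1/2}$ needs to be related to $\kappa_y(r)=\sum_i\int_{B_r}\det D^2y_i$). Here one uses $|\kappa_y(r)|\leq \sum_i|\int_{B_r}\det D^2 y_i|$ together with Lemma \ref{lem:isoper} to get $\sum_i\int_{\partial B_r}|D^2 y_i|\,\d\H^1 \geq (4\pi)^{1/2}\sum_i|\int_{B_r}\det D^2 y_i|^{1/2} \geq (4\pi)^{1/2}(\sum_i|\int_{B_r}\det D^2 y_i|)^{1/2}\geq (4\pi|\kappa_y(r)|)^{1/2}\geq (4\pi\cdot\tfrac12\pi\Delta^2)^{1/2}$ on the good radii — actually one wants the sharp constant, so I would instead observe that $\big(\sum_i \int_{\partial B_r}|D^2y_i|\big)^2 \geq \big(\sum_i(4\pi|\int_{B_r}\det D^2y_i|)^{1/2}\big)^2$ and that by concavity of $t\mapsto t^{1/2}$ the right side is minimized, given $\sum_i|\int_{B_r}\det D^2 y_i|$ fixed, when the mass concentrates in one component, giving $\geq 4\pi|\kappa_y(r)|$; hence $\sum_i\int_{\partial B_r}|D^2 y_i|\,\d\H^1 \geq (4\pi\kappa_y(r))^{1/2}$. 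On a good radius this is $\geq (4\pi(\pi\Delta^2 - Ch^{1/2}r^{-1/2}|\log h|^\alpha))^{1/2}$.

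Then I would square, apply Cauchy--Schwarz on $\partial B_r$ to pass from $\int_{\partial B_r}|D^2 y_i|\,\d\H^1$ to $\big(\int_{\partial B_r}|D^2 y_i|^2\,\d\H^1\big)^{1/2}(2\pi r)^{1/2}$ — being careful that Cauchy--Schwarz must be applied after summing in $i$, using $(\sum_i a_i)^2 \leq 3\sum_i a_i^2$ is the wrong direction, so instead I would keep a single function: set $w(x):=(\sum_i |D^2 y_i(x)|^2)^{1/2}$ is still not quite additive, so the cleanest route is to note $\sum_i\int_{\partial B_r}|D^2 y_i| \leq (2\pi r)^{1/2}(\sum_i\int_{\partial B_r}|D^2 y_i|^2)^{1/2}\cdot\sqrt{3}$, losing a constant $3$, which is harmless for the non-sharp version but NOT for the sharp constant $2\pi\Delta^2$ in the statement. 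To keep the sharp constant I would instead only invoke Lemma \ref{lem:isoper} for the single index $i=3$ (whose determinant carries the $\pi\Delta^2$ mass in the reference configuration) — but $\kappa_y$ mixes all three, so one genuinely needs the full sum; the resolution is that Cauchy--Schwarz for the sum $\sum_{i=1}^3$ against the constant weight on $\partial B_r$ gives $\big(\sum_i\int_{\partial B_r}|D^2 y_i|\big)^2 \leq 2\pi r \big(\sum_i (\int_{\partial B_r}|D^2y_i|)^2 / \ldots\big)$ — the correct statement is $\sum_i \int_{\partial B_r}|D^2 y_i| \le (2\pi r)^{1/2} \big(\sum_i \int_{\partial B_r}|D^2 y_i|^2\big)^{1/2}$ only when there is one index; for three indices one gets the extra $\sqrt 3$. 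I believe the intended argument therefore applies Lemma \ref{lem:isoper} componentwise and then uses $\sum_i(\int_{\partial B_r}|D^2y_i|^2)^{1/2}\cdot(2\pi r)^{1/2}\geq \sum_i\int_{\partial B_r}|D^2y_i|\geq(4\pi\kappa_y(r))^{1/2}$ together with $\sum_i(\int_{\partial B_r}|D^2y_i|^2)^{1/2}\leq (3\sum_i\int_{\partial B_r}|D^2y_i|^2)^{1/2}$ — and then the factor $3$ is absorbed since in the end one integrates $\d r/r$ and the sharp constant comes from the logarithm, not the integrand's constant. Concretely: on good radii $2\pi r\sum_i\int_{\partial B_r}|D^2 y_i|^2 \geq \tfrac{1}{3}\big(\sum_i\int_{\partial B_r}|D^2y_i|\big)^2\geq \tfrac{1}{3}\cdot 4\pi\kappa_y(r)$, hmm this loses the constant. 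The genuinely sharp step must instead be: there is exactly one index ($i=3$) for which $|\int_{B_r}\det D^2 y_i|$ is large, and the cross terms, controlled by the membrane energy via Proposition \ref{prop:interpol1}-type bounds restricted to each pair, are lower order — so I would revisit whether \eqref{eq:28} can be upgraded to hold for $\kappa_{y,3}(r):=\int_{B_r}\det D^2 y_3$ alone; if so, Lemma \ref{lem:isoper} applied to $v=y_3$ gives $\int_{\partial B_r}|D^2 y_3|^2\,\d\H^1 \geq (2\pi r)^{-1}(\int_{\partial B_r}|D^2y_3|)^2 \geq (2\pi r)^{-1}\cdot 4\pi\kappa_{y,3}(r)\geq \frac{2}{r}(\pi\Delta^2 - \text{error})$.

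Finally, I would integrate this pointwise bound over a set of radii of nearly full logarithmic measure: since each surviving dyadic annulus contributes a good radius, and more robustly since the set of bad radii in each annulus has small relative measure, I obtain $\int_{B_{1-5h}\setminus B_{5h}}\sum_i|D^2 y_i|^2\,\d x \geq \int_{\text{good }r}\big(\int_{\partial B_r}\sum_i|D^2 y_i|^2\,\d\H^1\big)\d r \geq \int_{r_* }^{1-5h}\frac{2\pi\Delta^2 - C\,\text{err}(r)}{r}\,\d r$ where $r_*\sim h|\log h|^{2\alpha}$, and $\int_{r_*}^{1}\d r/r = |\log r_*| = |\log h| - 2\alpha\log|\log h| - C$, while the error term $\int (Ch^{1/2}r^{-1/2}|\log h|^\alpha)/r\,\d r = Ch^{1/2}|\log h|^\alpha\int_{r_*}^1 r^{-3/2}\,\d r \leq Ch^{1/2}|\log h|^\alpha r_*^{-1/2} = C|\log h|^\alpha |\log h|^{-\alpha}\cdot C' = C$ is bounded, which is exactly what is needed. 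I expect the main obstacle to be the bookkeeping of constants in the passage from $\kappa_y$ (a sum over three components) to a pointwise lower bound on $\sum_i\int_{\partial B_r}|D^2 y_i|^2$ without losing the sharp leading constant $2\pi\Delta^2$; the cleanest fix, which I would pursue, is to show that only the third component contributes at leading order (the first two components' Hessian-determinant masses being controlled by membrane-energy error terms of order $h^{1/2}|\log h|^{\alpha}$), reducing everything to a single application of Lemma \ref{lem:isoper} with $v=y_3$.
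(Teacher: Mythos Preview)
Your overall architecture is right and matches the paper: restrict to radii $r\geq h_0$ with $h_0\sim h|\log h|^{2\alpha}$ (whence the $2\alpha\log|\log h|$ loss), use Lemma~\ref{lem:isoper} to bound $\int_{\partial B_r}|D^2y_i|$ from below, integrate $\d r/r$, and control the error $\int|\kappa_y(r)-\pi\Delta^2|\,r^{-1}\d r$ by a dyadic decomposition and the hypothesis~\eqref{eq:28}. Your final error computation is essentially the paper's (though note the hypothesis is on the $L^1$ norm, not pointwise, so the dyadic step is really needed, not merely a heuristic).

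The place where you get tangled, however, is the passage from three components to one while keeping the sharp constant $2\pi\Delta^2$. You try to sum $\sum_i\int_{\partial B_r}|D^2y_i|$ first and then apply Cauchy--Schwarz on $\partial B_r$, and correctly observe this costs a factor $3$. Your proposed fix --- to argue that only $i=3$ carries the mass, using membrane-energy estimates --- cannot work here: the proposition is stated for arbitrary $y_i\in C^2(B_1)$ satisfying~\eqref{eq:28}, with no membrane assumption at all, so there is nothing to force the curvature into one component.

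The paper's resolution is much simpler: reverse the order of operations. For \emph{each} $i$ separately, Jensen's inequality on $\partial B_r$ gives
\[
\int_{\partial B_r}|D^2y_i|^2\,\d\H^1\;\geq\;\frac{1}{2\pi r}\Big(\int_{\partial B_r}|D^2y_i|\,\d\H^1\Big)^2\;\geq\;\frac{1}{2\pi r}\cdot 4\pi\Big|\int_{B_r}\det D^2y_i\,\d x\Big|\;=\;\frac{2}{r}\Big|\int_{B_r}\det D^2y_i\,\d x\Big|,
\]
the second inequality being Lemma~\ref{lem:isoper}. This inequality is now \emph{linear} in $|\int_{B_r}\det D^2y_i|$, so summing over $i$ and applying the ordinary triangle inequality $\sum_i|a_i|\geq|\sum_i a_i|$ gives
\[
\sum_{i=1}^3\int_{\partial B_r}|D^2y_i|^2\,\d\H^1\;\geq\;\frac{2}{r}\,|\kappa_y(r)|
\]
with no loss of constant. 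Integrating in $r$ over $[h_0,1-5h]$ and splitting $|\kappa_y(r)|\geq \pi\Delta^2-|\kappa_y(r)-\pi\Delta^2|$ then gives exactly $2\pi\Delta^2\int_{h_0}^{1-5h}\d r/r$ minus the dyadically-controlled error. The mean-value step (picking one good radius per annulus) is unnecessary; one simply integrates over all $r$.
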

\begin{proof}
 We set
\[
h_0=5h|\log h|^{2\alpha}\,,
\]
and choose $J\in\N$ such that 
\[
2^Jh_0\leq 1-5h\leq 2^{J+1}h_0\,.
\]
Note that this choice implies $J\log 2-C\leq |\log h_0|\leq (J+1)\log 2+C$.
For $j=0,\dots,J+1$, we will write $R_j:=2^j h_0$.\\
We use Jensen's inequality to get for $r\in [5h,1-5h]$,
\begin{equation}
\int_{\partial B_r}|D^2y_i|^2\d \H^1(x)\geq  2\pi r\left(\frac{\int_{\partial
      B_r}|D^2 y_i|\d\H^1}{2\pi r}\right)^2\quad \text{for $i\in\{1,2,3\}$.}\label{eq:21}
\end{equation}
Recall the definition of $\kappa_y$ in \eqref{eq:14}.
We use Lemma \ref{lem:isoper} on the right hand side in \eqref{eq:21}, sum
over $i$, integrate over $r$, and use the triangle inequality 
to obtain
\[
  \sum_{i=1}^3\int_{B_{1-5h}\setminus B_{5h}}|D^2y_i|^2\d x
\geq \frac{1}{2\pi}
  \int_{5h}^{1-5h}\frac{\d r}{r}4\pi
    |\kappa_y(r)|\,.
\]
Reducing the domain of integration and using again the triangle inequality, we have

\begin{equation}
\begin{split}
  \sum_{i=1}^3\int_{B_{1-5h}\setminus B_{5h}}|D^2y_i|^2\d x
  \geq& 2\pi\Delta^2 \int_{R_0}^{R_J}\frac{\d r}{r}-2\int_{R_0}^{R_J}\frac{\d
    r}{r}|\kappa_y(r)-\pi\Delta^2|\\
  = &2\pi\Delta^2 \left(|\log h|-2\alpha\log|\log
    h|-C\right)\\
&-2\int_{R_0}^{R_J}\frac{\d r}{r}|\kappa_y(r)-\pi\Delta^2|\,.
\end{split}\label{eq:20}
\end{equation}
Using the assumption \eqref{eq:28}, we may estimate the last term on the right hand side  as an error term,
\[
\begin{split}
  \int_{R_0}^{R_J}\frac{\d r}{r}|\kappa_y(r)-\pi\Delta^2|= & \sum_{j=1}^{J}
  \int_{R_{j-1}}^{R_{j}}\frac{\d r}{r}|\kappa_y(r)-\pi\Delta^2|\\
  \leq & \sum_{j=1}^{J} C R_{j}^{-1} \|\kappa_y-\pi\Delta^2\|_{L^1(h,R^{j})}\\
  \leq & \sum_{j=1}^{J} C R_j^{-1/2}h^{1/2}|\log h|^{\alpha} \\
  \leq & C\,,
\end{split}
\]
where in the second inequality, we have used \eqref{eq:7}, and the last inequality is just the summation of a geometric series. Inserting this in \eqref{eq:20}, the proof of the proposition is complete.
\end{proof}

\begin{proof}[Proof of Theorem \ref{thm:main1}]
First note that the minimum is actually attained by some $y \in
W^{2,2}(B_1; \R^3)$, since the functional \eqref{eq:26} is coercive and lower
semi-continuous. The upper bound holds by Lemma \ref{lem:upperbound1}.
Assume that $y\in W^{2,2}(B_1;\R^3)$ with $\Ihd(y)\leq
2\pi\Delta^2h^2\left(|\log h|+C\right)$. By density of $C^2$ in $W^{2,2}$, we
may assume $y\in C^2(B_1;\R^3)$. By Proposition \ref{prop:interpol1},
the assumption of Proposition \ref{prop:diadicannuli} holds with
$\alpha=\frac34$. The lower bound now follows from that of Proposition \ref{prop:diadicannuli}.
\end{proof}

\section{Three-dimensional elasticity -- Proof of Theorem \ref{thm:3d}}
\label{sec:three-dimens-elast}

This section is concerned with the transition from two dimensions to
three. 
The three-dimensional model is defined by the free elastic energy functional 
\eqref{eq:29}. We first consider a special class of deformations. Let $y\in
C^2(B_{1,\Delta};\R^3)$ be an immersion, and let the three-dimensional
deformation $Y$ be given by the following Kirchhoff-Love ansatz:
\[
Y(x',x_3)= y(x')+x_3\nu_y\quad \text{ for }(x',x_3)\in B_{1,\Delta}\times[-h/2,h/2]\,,
\]
where $\nu_y=y_{,1}\wedge y_{,2}/|y_{,1}\wedge y_{,2}|$ denotes the unit
normal. With the assumption \eqref{eq:31} on the stored energy density $W$, and
provided that $\|Dy\|_{L^\infty}\leq C$, it is not difficult to show (cf.~the
proof of Lemma \ref{lem:upperbd3d} below)
that
\[
\fint_{[-h/2,h/2]}W(DY(x',x_3))\d x_3\leq C\left(\dist^2 (Dy(x'),O(2,3))+h^2|D^2y(x')|^2\right)\,,
\]
and hence 

\begin{equation}
\Ehd(Y)\leq C \int_{B_{1,\Delta}}\dist^2 (Dy,O(2,3))+h^2|D^2y|^2\d\L^2\,.\label{eq:19}
\end{equation}
Whenever a three-dimensional model and a plate model are related to each other
through this estimate for the Kirchhoff-Love ansatz, we
say for short that they correspond to each other.\\
\\
As a matter of fact, it is also possible to obtain lower bounds for three-dimensional models from
lower bounds in the corresponding plate models.
This is based on the
Geometric
Rigidity Theorem by Friesecke, James, M\"uller, which we cite now: 
\begin{theorem}{\cite{MR1916989}}
\label{thm:fjm}
Let $n\geq 2$ , and let $U\subset\R^n$  be a bounded connected Lipschitz
domain. There exists a constant $C^*=C^*(U)$ with the following property: For every $u\in
W^{1,2}(U;\R^n)$, there exists $R\in SO(n)$ such that
\[
\|Du-R\|_{L^2(U)}\leq C^*\int_U\dist^2(Du,SO(n))\d x\,.
\]
The constant $C^*$ is invariant under rescalings of $U$.
 \end{theorem}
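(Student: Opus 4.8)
The plan is to follow the argument of Friesecke, James and M\"uller (I prove the dimensionally correct form $\|Du-R\|_{L^2(U)}^2\le C^{*}\int_U\dist^2(Du,SO(n))\,\d x$). I begin with two reductions. Writing $E:=\int_U\dist^2(Du,SO(n))\,\d x$ and taking $R=\id$, the pointwise bound $|F-\id|^2\le 2\dist^2(F,SO(n))+2(\diam SO(n))^2$ gives $\|Du-\id\|_{L^2(U)}^2\le 2E+C|U|$, so if $E\ge\delta|U|$ for a fixed $\delta>0$ the claim is trivial; hence I may assume $E$ is as small as I like relative to $|U|$. Secondly, under a dilation $x\mapsto\lambda x$ both sides of the desired estimate pick up the factor $\lambda^n$, so the constant is automatically scale invariant, and it suffices to prove the estimate for $U$ equal to a fixed cube $Q$: a general bounded connected Lipschitz domain is covered by finitely many mutually overlapping pieces, each bi-Lipschitz to a cube, and on the overlap of two pieces the two associated rotations differ by a matrix whose squared norm is bounded by $E$ divided by the volume of the overlap -- a scale invariant quantity; chaining these bounds along a fixed finite chain of pieces produces a single rotation valid on all of $U$, with a constant depending only on $U$.

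On the cube the first step is a Lipschitz truncation: at a suitably chosen height one produces $v\in W^{1,\infty}(Q;\R^n)$ with $\|Dv\|_{L^\infty}\le C$, $|\{v\ne u\}|\le CE$ and $\int_Q|Du-Dv|^2\,\d x\le CE$, hence also $\int_Q\dist^2(Dv,SO(n))\,\d x\le CE$; it then suffices to prove $\|Dv-R\|_{L^2(Q)}^2\le CE$ and to transfer the bound back to $u$ using the $L^2$ control of $Du-Dv$. The gain is that $Dv$ now takes values in a fixed bounded set, on which the nonlinear functional $\dist(\cdot,SO(n))$ is well approximated by a linear one: left multiplication by a rotation is an isometry fixing $SO(n)$, and for a fixed rotation $R_0$ the quantity $|\sym(R_0^{T}F)-\id|$ agrees with $\dist(F,SO(n))$ up to an error quadratic in $|F-R_0|$.

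The heart of the matter is then the rigidity estimate for $v$ itself. I would decompose $Q$ into small congruent subcubes and call a subcube $Q_k$ \emph{good} if $Dv$ stays within a small fixed distance $\rho$ of a single rotation $R_k$ on $Q_k$ (one may take $R_k$ nearest to $\fint_{Q_k}Dv$). On a good subcube, Korn's second inequality applied to $R_k^{T}v-\id$, together with the linearization just described and the smallness of $\rho$, gives $\fint_{Q_k}|Dv-R_k|^2\le C\fint_{Q_k}\dist^2(Dv,SO(n))$; on the remaining subcubes one uses only $\|Dv\|_{L^\infty}\le C$. Since $v$ is Lipschitz, the rotations on adjacent good subcubes are forced to be close, with an error controlled by the local energies (matching of $v$ across the common face, plus compactness of $SO(n)$), and telescoping along chains of good subcubes produces a single rotation $R$ with $\|Dv-R\|_{L^2(Q)}^2\le CE$.

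I expect the main obstacle to be this last step, and in particular the proof that the good subcubes capture all but a controlled portion of $Q$. The underlying difficulty is genuinely nonlinear: $SO(n)$ is a compact manifold, not a convex set, so an $L^{\infty}$-bounded, curl-free field whose distance to $SO(n)$ is small in $L^2$ is not obviously close to a single rotation, and averaging or mollification destroys proximity to $SO(n)$. What saves the day is the elementary geometric fact that two distinct rotations differ by a matrix of rank at least $2$: a curl-free field cannot laminate between distinct rotations, and the quantitative version of this -- any ``interface'' between regions near two far-apart rotations carries a definite amount of $\dist^2$ energy -- is what forces $Dv$ to be near a single rotation on most of $Q$. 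The Lipschitz truncation confines the region where $Dv$ is far from $SO(n)$, Korn's inequality supplies the local linear rigidity, and these combined with the rank obstruction yield the global estimate; the scale invariance of Korn's constant on cubes and of the covering then gives the asserted scale invariance of $C^{*}$.
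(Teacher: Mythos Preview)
The paper does not prove this theorem: it is quoted from Friesecke, James and M\"uller \cite{MR1916989} and used as a black box in Section~\ref{sec:three-dimens-elast}. There is therefore no ``paper's own proof'' to compare your attempt against. Your sketch follows the broad architecture of the original FJM argument (reduction to small energy, Lipschitz truncation, local Korn on good cubes, patching of rotations), which is appropriate, but since the present paper neither reproduces nor adapts that proof, a comparison is moot.
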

The deduction of lower bounds  in three-dimensional elasticity starting from
lower bounds for the corresponding plate models via  Theorem \ref{thm:fjm}
has been carried out in \cite{Simon}. The main strategy of our proof is going to
be  similar to the one given in that reference. Since the estimates needed here
differ in many details from those in \cite{Simon}, we
nevertheless include a full proof here for the convenience of the reader.\\
The idea is as follows: From a given deformation in the three-dimensional
model, one constructs a deformation in the two-dimensional one through averaging
and smoothing. On small boxes of sidelength comparable to $h$ in the 3-dimensional
domain, the deformation gradient is close to $SO(3)$ by Theorem \ref{thm:fjm}.
This implies that the gradient of the 2-dimensional deformation derived from it
is close to $O(2,3)$. In this way, the membrane term of the two-dimensional
model can be estimated from above by the elastic energy in the 3-dimensional
one. The bending term can be dealt with in a similar way.\\
\\
We now recall some notation, and introduce some more.\\
\\
Recall from Section \ref{sec:statement-results} that
$\iota_\Delta:B_{1,\Delta}\to B_1$ is defined by
\[
\iota_\Delta(x)= |x|\left(\cos \frac{\varphi(x)}{\sqrt{1-\Delta^2}},\cos
  \frac{\varphi(x)}{\sqrt{1-\Delta^2}}\right)\,,
\]
where $\varphi(x)$ is the angular coordinate of $x$.
To alleviate the notation,  we write
\[\iota(x)\equiv \iota_\Delta(x)\]
for $x\in B_{1,\Delta}$. When convenient, we will use the notation
$z:=\iota(x)$.\\
On $\iota(B_{1,\Delta})=B_1\setminus \{(x_1,0):x_1\leq
0\}$, $\iota$ has a well defined inverse, that we denote by
\[
j: B_1\setminus \{(x_1,0):x_1\leq
0\}\to B_{1,\Delta}\,.
\]
For $y\in \A^{2d}$, let $\tilde y:\lambda_h\to\R^3$ be defined by  $\tilde y=y\circ j$.\\
In some places, it will be convenient to work, instead of $B_{1,\Delta}$, with
the smaller domain
\[
\ohd:=\left\{x\in B_{1-5h}\setminus
B_{5h}:0\leq \arccos\frac{x_1}{|x|} <\sqrt{1-\Delta^2}\pi\right\}\,.
\]
By slight abuse of notation, we will not distinguish between the map
$\iota$, defined on $B_{1,\Delta}$, and its restriction to $\ohd$. In the same
way, we will not distinguish between $j:B_1\setminus \R_-\to B_{1,\Delta}$ and its restriction
to $\lambda_h:= \iota(\ohd)=\left(B_{1-5h}\setminus B_{5h}\right)\setminus\{(x_1,0):x_1\leq
0\}$. \\
If for two quantities $f,g$, there exists a constant $C=C(\Delta)$ such that
\[
\frac1C f\leq g\leq Cf\,,
\]
we will write $f\simeq g$.\\
Furthermore, for tensors  $a\in \R^{3\times 2\times 2}$ and $b\in\R^{2\times 2}$,
let $a:(b,b)\in \R^{3\times 2\times 2}$ be defined by
\[
\left(a:(b,b)\right)_{ijk}:=\sum_{l,m=1}^2a_{ilm}b_{lj}b_{mk}
\]
for $i\in \{1,2,3\}$, $j,k\in\{1,2\}$.

\subsection{The corresponding plate theory}

To prove Theorem \ref{thm:3d}, we will first have to prove an energy scaling law
for a two-dimensional plate model that slightly differs from \eqref{eq:26},
namely, the one on the right hand side in \eqref{eq:19}. The
reason why we do not choose to work with the three-dimensional
model corresponding to \eqref{eq:26}  is that we want to apply  Theorem \ref{thm:fjm}, and the latter requires a flat
reference metric -- not only in the three-dimensional model, but in the
corresponding two-dimensional one too.\\
% If we use a change of coordinates to transform the functional \eqref{eq:26} so that the
% reference metric becomes flat, the bending term becomes more complicated (through the
% appearance of terms involving the second derivatives of the change of coordinates
% map). Such a bending term would  again be
% problematic for the deduction of lower bounds in the corresponding
% three-dimensional elasticity model. 
% This is why we choose $\Ehd$ as our three-dimensional model.\\
\\
Let 
\[
\A^{2d}:=\left\{y\in W^{2,2}_{\mathrm{loc.}}(B_{1,\Delta};\R^3):y\circ
  \iota^{-1}\in W^{2,2}(B_1;\R^3)\right\}\,.
\]
Our two-dimensional plate model is given by
$\Jhd:\A^{2d}\to \R$,
\[
\begin{split}
  \Jhd(y)  =&\int_{B_{1,\Delta}}|g_y-\id_{2\times 2}|^2+h^2|D^2y|^2\d\L^2\,.
\end{split}
\]
It will be convenient to define a slight variation of $\Jhd$, that consists of a
reduction of the domain of integration for the energy density. Let
\[
  \Jhdt(y)
  =\int_{\ohd}|g_y-\id_{2\times 2}|^2+h^2|D^2y|^2\d\L^2\,.
\]
\begin{lemma}
  \label{lem:cov}
With $\tilde y:=y\circ j$, we have for every $y\in \A^{2d}$, 
\begin{equation}
\begin{split}
  \Jhdt(y)\simeq 
  &\int_{\lambda_h}|g_{\tilde y}(z)-g_{\Delta}(z)|^2\\
  &+h^2\left|D^2\tilde y(z)-\Delta^2|z|^{-1}\left(D\tilde y(z)\cdot\hat z\right)\otimes
    \hat z^\bot\otimes \hat z^\bot\right|^2\d\L^2(z)\,,\\
  \Jhd(y)\simeq 
  &\int_{B_1}|g_{\tilde y}(z)-g_{\Delta}(z)|^2\\
  &+h^2\left|D^2\tilde y(z)-\Delta^2|z|^{-1}\left(D\tilde y(z)\cdot\hat z\right)\otimes
    \hat z^\bot\otimes \hat z^\bot\right|^2\d\L^2(z)\,.
\end{split}\label{eq:18}
\end{equation}

\end{lemma}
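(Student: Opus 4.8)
The plan is to reduce \eqref{eq:18} to two pointwise identities valid under the substitution $z=\iota(x)$, together with elementary bounds on the singular values of $D\iota$. Write $\alpha:=(1-\Delta^2)^{-1/2}$, and for $x\in B_{1,\Delta}$ set $z=\iota(x)$, $\hat z=z/|z|$. First I would record the derivatives of $\iota$. Since $\iota$ acts in polar coordinates by $(|x|,\varphi)\mapsto(|x|,\alpha\varphi)$, one has $|z|=|x|$, and using $D\varphi=|x|^{-1}\hat x^\bot$ a direct computation gives
\[
D\iota(x)=\hat z\otimes\hat x+\alpha\,\hat z^\bot\otimes\hat x^\bot\,,\qquad D^2\iota(x)=\frac{1-\alpha^2}{|x|}\,\hat z\otimes\hat x^\bot\otimes\hat x^\bot\,.
\]
In particular $\det D\iota\equiv\alpha$, so $\d\L^2(x)=\alpha^{-1}\d\L^2(z)$, and the singular values of $D\iota(x)$ are $1$ and $\alpha$, hence bounded above and below by constants depending only on $\Delta$.

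Next I would establish two algebraic identities. From $y=\tilde y\circ\iota$ one gets $Dy(x)=D\tilde y(z)D\iota(x)$, hence $g_y(x)=D\iota(x)^Tg_{\tilde y}(z)D\iota(x)$; a short computation with the formula for $D\iota$, using $\alpha^2(1-\Delta^2)=1$, shows $D\iota(x)^Tg_\Delta(z)D\iota(x)=\id_{2\times 2}$, so that
\[
g_y(x)-\id_{2\times 2}=D\iota(x)^T\bigl(g_{\tilde y}(z)-g_\Delta(z)\bigr)D\iota(x)\,.
\]
For the Hessian, the chain rule gives $D^2y=(D^2\tilde y):(D\iota,D\iota)+(D\tilde y)\cdot(D^2\iota)$; inserting the formula for $D^2\iota$ and using $D\iota(x)^T\hat z^\bot=\alpha\hat x^\bot$, $|x|=|z|$ and $(1-\alpha^2)/\alpha^2=-\Delta^2$, one rewrites the lower-order term and obtains
\[
D^2y(x)=T(z):\bigl(D\iota(x),D\iota(x)\bigr)\,,\qquad T(z):=D^2\tilde y(z)-\Delta^2|z|^{-1}\bigl(D\tilde y(z)\cdot\hat z\bigr)\otimes\hat z^\bot\otimes\hat z^\bot\,.
\]

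To conclude, I would use that for $A\in\R^{2\times 2}$ with singular values in $[1,\alpha]$ one has $|M|\le|A^TMA|\le\alpha^2|M|$ for every $M\in\R^{2\times 2}$, and, applying this componentwise in the first index, $|S|\le|S:(A,A)|\le\alpha^2|S|$ for every $S\in\R^{3\times 2\times 2}$. With $A=D\iota(x)$ this yields the pointwise comparisons
\[
|g_y(x)-\id_{2\times 2}|^2\simeq|g_{\tilde y}(z)-g_\Delta(z)|^2\,,\qquad |D^2y(x)|^2\simeq|T(z)|^2\,,
\]
with constants depending only on $\Delta$ (in particular $h$-independent). Multiplying the bending terms by $h^2$, summing, and changing variables via $x=j(z)$ — which maps $\ohd$ onto $\lambda_h$ and $B_{1,\Delta}$ onto $B_1\setminus\R_-$, a set of full measure in $B_1$, with constant Jacobian $\alpha^{-1}$ — gives \eqref{eq:18}. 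The identities above were derived for $y\in C^2$; for general $y\in\A^{2d}$, i.e.\ $\tilde y\in W^{2,2}(B_1;\R^3)$, they persist a.e.\ by approximating $\tilde y$ in $W^{2,2}$ on subsets compactly contained in the domain of $\iota$, where $\iota$ is smooth with bounded derivatives; if either side of \eqref{eq:18} is infinite, so is the other.

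The one genuinely delicate point is the computation of $D^2\iota$ and the observation that its contribution $(D\tilde y)\cdot(D^2\iota)$ to $D^2y$ is precisely $\bigl(-\Delta^2|z|^{-1}(D\tilde y(z)\cdot\hat z)\otimes\hat z^\bot\otimes\hat z^\bot\bigr):(D\iota,D\iota)$ — that is, the $\hat x^\bot\otimes\hat x^\bot$ factor produced by $D^2\iota$ can be pulled back through $D\iota$ into a correction of the Hessian supported on $\hat z^\bot\otimes\hat z^\bot$. Once that identity is in place, the rest is bookkeeping.
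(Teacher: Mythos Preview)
Your proof is correct and follows essentially the same route as the paper: compute $D\iota$ and $D^2\iota$ explicitly, apply the chain rule, use that the singular values of $D\iota$ lie in $[1,\alpha]$ to obtain pointwise comparability, and change variables with the constant Jacobian $\alpha$. Your packaging is slightly tidier in that you absorb the lower-order term into a single identity $D^2y(x)=T(z):(D\iota(x),D\iota(x))$ before invoking the singular-value bound, whereas the paper inserts the chain-rule expression into the energy and then applies the inverse transformation $:(D\iota^{-1},D\iota^{-1})$; but this is a cosmetic difference, not a different argument.
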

\begin{proof}
For $x\in \ohd$, we compute
\begin{equation}
\begin{split}
  D\iota|_x= &\hat z\otimes\hat
  x+\frac{1}{\sqrt{1-\Delta^2}}\hat z^\bot\otimes\hat x^\bot\\
  D^2 \iota|_x=&-\frac{\Delta^2}{1-\Delta^2}|x|^{-1}\hat z\otimes \hat x^\bot\otimes \hat x^\bot\,.
\end{split}\label{eq:46}
\end{equation}
The first equation implies
\begin{equation}
  \label{eq:45}
  D\iota|_x\simeq \id_{2\times 2}
\end{equation}
in the sense of positive definite matrices.
By the chain rule, $D y|_x=D\tilde y|_{\iota(x)}\cdot D\iota|_x$ for $y\in\A^{2d}$ and $x\in\ohd$.
As a consequence of the second equation in \eqref{eq:46},  we have
\begin{equation*}
\begin{split}
 D^2 y|_x
=&D^2 \tilde y|_{\iota(x)}:(D\iota|_x,D\iota|_x)+D\tilde y|_{\iota(x)}\cdot D^2\iota|_x\\
=&D^2 \tilde y|_{\iota(x)}:(D\iota|_x,D\iota|_x)
-
\frac{\Delta^2}{1-\Delta^2}|x|^{-1} \left(D\tilde y|_{\iota(x)}\cdot \hat z\right)\otimes \hat x^\bot\otimes \hat x^\bot\,.
\end{split}
\end{equation*}
Hence,
\begin{equation}
\begin{split}
  \Jhdt(y)=&\int_{\ohd}\left|D\iota|_x^TD\tilde y|_{\iota(x)}^TD\tilde
    y|_{\iota(x)}D\iota|_x-\id_{2\times 2}\right|^2\\
  &+h^2\Bigg|D^2\tilde
    y|_{\iota(x)}:(D\iota|_x,D\iota|_x)\\
&-\frac{\Delta^2}{1-\Delta^2}|x|^{-1}\left(D\tilde
      y|_{\iota(x)}\cdot \hat z\right)\otimes \hat x^\bot\otimes \hat
      x^\bot\Bigg|^2\d\L^2(x)\,.
  \end{split}\label{eq:48}
  \end{equation}
By \eqref{eq:45}, we have for every $a\in \R^{3\times 2\times 2}$, and for every
$b\in \R^{2\times 2}$,
\[
\begin{split}
  |a:(D\iota|_x^{-1},D\iota|_x^{-1})|^2\simeq &|a|^2\\
  \left|\left(D\iota|_x^{-1}\right)^T b\,D\iota|_x^{-1}\right|^2\simeq &|b|^2\,.
\end{split}
\]
Using this in \eqref{eq:48}, we get
\begin{equation}
\begin{split}
  \Jhdt(y)\simeq &\int_{\ohd}\left|D\tilde y|_{\iota(x)}^TD\tilde
    y|_{\iota(x)}-D\iota|_x^{-1}\left(D\iota|_x^{-1}\right)^T\right|^2\\
 & + h^2 \left|D^2\tilde y|_{\iota(x)}-\Delta^2\left(D\tilde y|_{\iota(x)}\cdot\hat
      z\right)\otimes \hat z^\bot\otimes \hat z^\bot\right|^2\d\L^2(x)\,.
\end{split}\label{eq:49}
\end{equation}
Multiplying this relation with a factor  $\left|\det
D\iota|_x\right|=(1-\Delta^2)^{-1/2}\simeq 1$, and using the coarea formula, we
obtain the first relation in \eqref{eq:18}; the second is obtained in exactly
the same way.
\end{proof}

\begin{lemma}
\label{lem:upperbd2}
  There exists $y^{h}\in \A^{2d}$ with 
\[
\begin{split}
  |Dy^h|\leq& C\\
  |D\nu_{y^h}|\leq &C |D^2y^h|\\
  \Jhd(y^h)\leq& C h^2|\log h|\,.
\end{split}
\]
\end{lemma}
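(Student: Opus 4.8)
The plan is to work in the ``disc picture'' afforded by Lemma~\ref{lem:cov}. Writing $\tilde y:=y\circ\iota^{-1}$, the second identity in \eqref{eq:18} reduces the lemma to producing $\tilde y^h\in W^{2,2}(B_1;\R^3)$ for which the right-hand side of that identity is $\le Ch^2|\log h|$ and for which $y^h:=\tilde y^h\circ\iota$ satisfies $|Dy^h|\le C$ and $|D\nu_{y^h}|\le C|D^2 y^h|$ on $B_{1,\Delta}$; indeed $y^h\in\mathcal A^{2d}$ then holds automatically since $y^h\circ\iota^{-1}=\tilde y^h\in W^{2,2}(B_1;\R^3)$, and $\Jhd(y^h)\le Ch^2|\log h|$ follows from Lemma~\ref{lem:cov}. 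Away from the origin the natural choice is the singular cone $\tilde y^h=\ydel$, for which $g_{\tilde y^h}=\gd$ (so the membrane part of \eqref{eq:18} vanishes), while, using $D^2\ydel=\Delta|z|^{-1}e_3\otimes\hat z^\bot\otimes\hat z^\bot$ and $D\ydel\cdot\hat z=\sqrt{1-\Delta^2}\,\hat z+\Delta e_3$, the bending integrand equals $\Delta\sqrt{1-\Delta^2}\,|z|^{-1}$; on $B_1\setminus B_h$ this contributes $2\pi\Delta^2(1-\Delta^2)h^2|\log h|$. The cone cannot, however, simply be cut off as in Lemma~\ref{lem:upperbound1}: that would make $y^h$ vanish near $0$, so $\nu_{y^h}$ would be undefined there; and since $D^2\iota\sim|x|^{-1}$ by \eqref{eq:46}, the $|z|^{-1}(D\tilde y^h\cdot\hat z)$–term in \eqref{eq:18} forces $D\tilde y^h(z)\to0$ as $z\to0$, while near $0$ the map must nonetheless be an immersion whose Gauss map extends continuously up to the origin.

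I would reconcile these by reparametrising a fixed capped cone. Fix once and for all a smooth immersion $\Xi\colon B_1\to\R^3$ with $\Xi(w)=\ydel(w)$ for $|w|\ge\tfrac12$ and a strictly convex rounded tip on $B_{1/2}$; then $\nu_\Xi\in C^0(\overline{B_1};S^2)$, $|D\nu_\Xi|\le C$, and $D\Xi$ is uniformly nondegenerate on $\overline{B_1}$. Fix a small $\delta=\delta(\Delta)>0$ and a smooth strictly increasing $\beta\colon[0,1]\to[0,1]$ with $\beta(t)=t^2$ for $t\le\delta$ and $\beta(t)=t$ for $t\ge\tfrac12$. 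Putting $\rho:=h$ and $\Theta(z):=\beta(|z|/\rho)\,z/|z|$ on $B_\rho$, define $\tilde y^h(z):=\rho\,\Xi(\Theta(z))$ for $|z|\le\rho$ and $\tilde y^h(z):=\ydel(z)$ for $|z|\ge\rho$; these agree smoothly on $\{|z|\ge\rho/2\}$ because there $\Theta(z)=z/\rho$ and $\rho\,\Xi(z/\rho)=\ydel(z)$.

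The verification then proceeds as follows. \emph{(i)} $\tilde y^h\in W^{2,2}(B_1;\R^3)$: off $B_\rho$ the density $\Delta^2|z|^{-2}$ is integrable, and on $B_\rho$ one has $\tilde y^h\in C^1$ with $|D\tilde y^h|\le C$, $|D^2\tilde y^h|\le C/\rho$, using $|D\Theta|\le C/\rho$, $|D^2\Theta|\le C/\rho^2$ and the boundedness of $D\Xi,D^2\Xi$. \emph{(ii)} Membrane: $g_{\tilde y^h}=\gd$ off $B_\rho$ and $|g_{\tilde y^h}-\gd|\le C$ on $B_\rho$, so this part of \eqref{eq:18} is $\le C\rho^2=Ch^2$. \emph{(iii)} Bending: off $B_\rho$ it equals $2\pi\Delta^2(1-\Delta^2)h^2|\log h|$, and on $B_\rho$ the integrand is $\le|D^2\tilde y^h|+\Delta^2|z|^{-1}|D\tilde y^h\cdot\hat z|\le C/\rho$ (using $|D\tilde y^h(z)|\le C|z|/\rho$ for $|z|\le\delta\rho$, and $|D\tilde y^h|\le C$ with $|z|\ge\delta\rho$ otherwise), contributing $\le C\rho^2\rho^{-2}h^2=Ch^2$. \emph{(iv)} $|Dy^h|=|D\tilde y^h\,D\iota|\le C|D\tilde y^h|\le C$ by \eqref{eq:45}. \emph{(v)} $y^h$ is an immersion on $B_{1,\Delta}$ (composition of the immersion $\tilde y^h|_{B_1\setminus\{0\}}$ with the local diffeomorphism $\iota$) and $\nu_{y^h}=\nu_{\tilde y^h}\circ\iota$. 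Combining \emph{(i)}--\emph{(iii)} with Lemma~\ref{lem:cov} gives $\Jhd(y^h)\le Ch^2|\log h|$.

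It remains to prove $|D\nu_{y^h}|\le C|D^2 y^h|$. Wherever $g_{y^h}$ is bounded below this is automatic from the Weingarten identity $|D\nu_{y^h}|\le|\sf_{[y^h]}|\,|g_{y^h}^{-1}|\,|Dy^h|\le C|D^2 y^h|$; this covers $B_{1,\Delta}\setminus B_\rho$ (where $y^h=\ydel\circ\iota$, $|D^2 y^h|=\Delta(1-\Delta^2)^{-1/2}|x|^{-1}$, $|D\nu_{y^h}|\le C|x|^{-1}$) and the annulus $\{\delta\rho\le|x|\le\rho\}\cap B_{1,\Delta}$ (where $\tilde y^h$ is a smooth rescaling of a fixed nondegenerate configuration). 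On $\{|x|\le\delta\rho\}\cap B_{1,\Delta}$, however, $g_{y^h}\to0$ and this degenerates, so I argue directly: on one hand $|D\nu_{y^h}(x)|\le|D\nu_\Xi|\,|D\Theta(\iota(x))|\,|D\iota(x)|\le C|x|/\rho^2$; on the other hand, since by \eqref{eq:46} the term $D\tilde y^h\cdot D^2\iota$ in $D^2 y^h=D^2\tilde y^h{:}(D\iota,D\iota)+D\tilde y^h\cdot D^2\iota$ is a multiple of $\hat x^\bot\otimes\hat x^\bot$, the $\hat x\otimes\hat x$–component of $D^2 y^h$, namely $\partial_r^2 y^h=(D^2\tilde y^h)(\hat z,\hat z)=\partial_r^2\tilde y^h=\rho^{-1}\bigl(\beta''\,D\Xi(\Theta)\hat z+(\beta')^2(D^2\Xi)(\Theta)(\hat z,\hat z)\bigr)$, has (with $\beta''=2$, $\beta'=2|x|/\rho\le2\delta$) norm $\ge c/\rho$ for $\delta$ small, by the nondegeneracy of $D\Xi$. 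Hence $|D\nu_{y^h}|\le C|x|/\rho^2\le C\rho^{-1}\le C|D^2 y^h|$ there too, which completes the proof. The main obstacle is exactly this last estimate: the singular curvature $|x|^{-1}$ of $\iota$ feeds into $D^2 y^h$ and could a priori cancel the ``good'' curvature supplied by the reparametrised cap, and the observation that $D^2\iota$ acts only in the $\hat x^\bot\otimes\hat x^\bot$ direction — which is what dictates the degenerate radial form of $\Theta$ — is what rules this out.
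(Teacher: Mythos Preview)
Your construction is correct and your verification is sound; in particular the delicate point you isolate at the end --- that near the origin $g_{y^h}$ degenerates, so the Weingarten estimate $|D\nu|\le C|g^{-1}|\,|Dy|\,|D^2 y|$ is useless, and one must instead produce a lower bound for $|D^2 y^h|$ directly --- is handled cleanly by your observation that $D^2\iota$ only contributes in the $\hat x^\bot\otimes\hat x^\bot$ slot, leaving $\partial_r^2 y^h=\partial_r^2\tilde y^h$ untouched and of size $\gtrsim 1/\rho$ thanks to the $\beta''=2$ term.

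The paper's proof is much shorter and takes a different route: it simply sets $\tilde y^h(z)=\eta(|z|/h)\,\ydel(z)$ with a cutoff $\eta$, exactly the test function from Lemma~\ref{lem:upperbound1}, and $y^h=\tilde y^h\circ\iota$. On $B_{h/2}$ this makes $y^h$ identically zero, so the paper does not attempt to make $y^h$ an immersion there; the normal is only computed on $B_1\setminus B_{h/2}$, where the image lies on the cone and $\nu_{\tilde y^h}(z)=-\Delta\hat z+\sqrt{1-\Delta^2}\,e_3$ explicitly. The energy bound $\Jhd(y^h)\le Ch^2|\log h|$ then follows from \eqref{eq:18} by direct computation, as in your steps (ii)--(iii). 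The concern you raise --- that the cutoff leaves $\nu_{y^h}$ undefined near $0$ --- is real (and in fact the pointwise claim $|D\nu_{y^h}|\le C|D^2 y^h|$ is problematic near $|x|=h/2$, where all derivatives of $\eta$ vanish but $|D\nu|\sim 1/h$ survives), but for the sole application in Lemma~\ref{lem:upperbd3d} one only needs $\int_{B_{1,\Delta}}h^2|D\nu_{y^h}|^2\le Ch^2|\log h|$ together with $|Dy^h|\le C$, and both hold for the cutoff cone since $|D\nu_{y^h}|\le C/|x|$.

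In short: your capped-cone-with-degenerate-reparametrisation produces a $y^h$ for which the lemma holds literally and pointwise on all of $B_{1,\Delta}$; the paper's construction is cruder but suffices for what is actually used downstream. Your argument buys you a rigorous pointwise statement at the cost of the extra bookkeeping in step~(v); the paper's buys brevity at the cost of a slightly loose formulation.
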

\begin{proof}
   Let $\tilde y^h:B_1\to \R^3$ be the test function  that  we constructed in Lemma
\ref{lem:upperbound1}; i.e., for $x\in B_1$,
\[
\tilde y^h(x)=\eta(|x|/h) \ydel(x)\,,
\]
where
$\eta\in C^\infty([0,\infty))$ with $\eta(t)=0$ for $t\leq \frac12$, $\eta(t)=1$ for $t\geq
  1$,  $|\eta'|\leq 4$, $|\eta''|\leq 8$; and $\ydel$ is given by
\[
\ydel(x)= \sqrt{1-\Delta^2}x+\Delta e_3|x|\,.
\]
We set $y^h:=\tilde y^h\circ \iota$.
From the explicit formulas for $D\tilde y^h,D^2\tilde y^h$  in the proof of
Lemma \ref{lem:upperbound1} and \eqref{eq:46}, we see that $|D y^h|\leq C$, $ D 
y^h=0$ on $B_{h/2}$, and $|D\nu_{y^h}|\leq C|D^2 y^h(z)|\leq C |z|^{-1}$ for $z\in B_1\setminus
B_{h/2}$. Furthermore, we have
$g_{\tilde y^h}=\gd$ on
$B_{1}\setminus B_h$ and $|g_{\tilde y^h}|\leq C$ on $B_h$.
Hence, using \eqref{eq:18}, we may estimate 
\[
\begin{split}
  \Jhd(y^h)\simeq &\int_{B_{1}}|g_{\tilde y^h}(z)-\gd(z)|^2
+h^2\left|D^2 \tilde y^h(z)-\Delta^2|z|^{-1}(D
    \tilde y^h(z)\cdot \hat z)\otimes \hat z^\bot\otimes\hat z^\bot\right|^2\d\L^2(z)\\
\leq& \int_{B_h}C\d\L^2
+h^2 \int_{B_{1}\setminus B_{h/2}}
C|z|^{-2}\d\L^2(z)\\
    \leq & C h^2\left(|\log h|+1\right)\,.
\end{split}
\]
This proves the lemma.
\end{proof}

\begin{proposition}
\label{prop:Jhd}
For all $h$ small enough, we have
\[
\begin{split}
  \frac1C h^2|\log h|\leq &\inf_{y\in\A^{2d}} \Jhdt(y)\,.\\
\end{split}
\]
\end{proposition}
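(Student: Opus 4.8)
The goal is the lower bound $\inf_{y\in\A^{2d}}\Jhdt(y)\geq \frac1C h^2|\log h|$, and the natural strategy is to reduce it to the lower bound already proved in Theorem~\ref{thm:main1} (equivalently, to Propositions~\ref{prop:interpol1} and~\ref{prop:diadicannuli}) via the change of variables recorded in Lemma~\ref{lem:cov}. First I would fix $y\in\A^{2d}$ with $\Jhdt(y)\leq h^2|\log h|$ (otherwise there is nothing to prove), set $\tilde y:=y\circ j\in W^{2,2}(B_1;\R^3)$, and use the first relation in \eqref{eq:18} to conclude
\[
\int_{\lambda_h}|g_{\tilde y}-\gd|^2 + h^2\left|D^2\tilde y-\Delta^2|z|^{-1}(D\tilde y\cdot\hat z)\otimes\hat z^\bot\otimes\hat z^\bot\right|^2\d\L^2 \leq C h^2|\log h|\,.
\]
Since $\lambda_h$ differs from $B_{1-5h}\setminus B_{5h}$ only by the removed slit $\R_-$, which is $\L^2$-null, the integral over $B_{1-5h}\setminus B_{5h}$ is the same. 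By density I may take $\tilde y\in C^2$.

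**Key steps.**

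The correction term $\Delta^2|z|^{-1}(D\tilde y\cdot\hat z)\otimes\hat z^\bot\otimes\hat z^\bot$ is bounded pointwise by $C|z|^{-1}\|D\tilde y\|_{L^\infty}$, but I cannot assume $D\tilde y$ is bounded a priori. However, from the membrane bound $\int|g_{\tilde y}-\gd|^2\leq Ch^2|\log h|$ and $|\gd|\leq C$, we get $\int_{B_{1-5h}\setminus B_{5h}}|D\tilde y|^4\,\d z\leq C$ (since $|D\tilde y|^2\leq |g_{\tilde y}|^{1/2}\cdot\sqrt 2$... more precisely $|D\tilde y|^4 = |Dy^TDy|^2 \cdot$ const $\leq C(|g_{\tilde y}-\gd|^2+|\gd|^2)$), so $D\tilde y\in L^4$ uniformly. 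Then the correction term is controlled in $L^2(B_{1-5h}\setminus B_{5h})$ by Hölder: $\big\|\,|z|^{-1}(D\tilde y\cdot\hat z)\big\|_{L^2}\leq \|D\tilde y\|_{L^4}\,\big\||z|^{-1}\big\|_{L^4}\leq C|\log h|^{1/4}$, hence
\[
h^2\int_{B_{1-5h}\setminus B_{5h}}|D^2\tilde y|^2\,\d z \leq C h^2|\log h| + C h^2\cdot C|\log h|^{1/2} \leq C h^2|\log h|\,.
\]
Therefore $\tilde y$ satisfies exactly the hypothesis of Proposition~\ref{prop:interpol1} (possibly after passing to the slightly smaller annulus $B_{1-5h}\setminus B_{5h}$, which is harmless since the constants there are already of that form). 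Proposition~\ref{prop:interpol1} then yields $\|\kappa_{\tilde y}-\pi\Delta^2\|_{L^1(6h,R)}\leq Ch^{1/2}R^{1/2}|\log h|^{3/4}$, and Proposition~\ref{prop:diadicannuli} with $\alpha=\tfrac34$ gives $\sum_i\int_{B_{1-5h}\setminus B_{5h}}|D^2\tilde y_i|^2\,\d z\geq 2\pi\Delta^2(|\log h|-\tfrac32\log|\log h|-C)$. Finally, reversing the estimate above (bounding $|D^2\tilde y|^2$ by the integrand in \eqref{eq:18} plus the $L^2$-controlled correction term), $h^2\int|D^2\tilde y|^2\,\d z\leq C\Jhdt(y)+Ch^2|\log h|^{1/2}$, so $\Jhdt(y)\geq \frac1C h^2\int|D^2\tilde y|^2 - Ch^2|\log h|^{1/2}\geq \frac1C h^2|\log h|$ for $h$ small.

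**Main obstacle.**

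The one genuinely nontrivial point is handling the $|z|^{-1}(D\tilde y\cdot\hat z)$ correction term without an a priori $L^\infty$ bound on $D\tilde y$: one must extract the uniform $L^4$ bound on $D\tilde y$ from the membrane energy and then apply Hölder against $|z|^{-1}\in L^4(B_1\setminus B_{5h})$ with norm $\sim|\log h|^{1/4}$, which is small enough relative to $|\log h|$ to be absorbed. Everything else is bookkeeping: matching the annulus $B_{1-5h}\setminus B_{5h}$ against the domain $\ohd$ (their images under $\iota$ coincide up to a null set), discarding the slit $\R_-$, and invoking the already-established Propositions. I would also remark that the matching upper bound $\inf\Jhdt\leq\inf\Jhd\leq Ch^2|\log h|$ is Lemma~\ref{lem:upperbd2}, so this proposition completes the two-sided scaling law for the corresponding plate theory.
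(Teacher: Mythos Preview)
Your overall architecture is right --- pass to $\tilde y=y\circ j$ via Lemma~\ref{lem:cov}, verify the hypothesis of Proposition~\ref{prop:interpol1}, feed the output into Proposition~\ref{prop:diadicannuli}, and undo the change of variables --- and this is exactly what the paper does. But there is a genuine error in your control of the correction term $c(z):=\Delta^2|z|^{-1}(D\tilde y\cdot\hat z)\otimes\hat z^\bot\otimes\hat z^\bot$, and it breaks the argument.

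You claim $\big\||z|^{-1}\big\|_{L^4(B_{1-5h}\setminus B_{5h})}\leq C|\log h|^{1/4}$. In fact
\[
\int_{B_{1-5h}\setminus B_{5h}}|z|^{-4}\,\d z = 2\pi\int_{5h}^{1-5h} r^{-3}\,\d r \simeq h^{-2}\,,
\]
so $\big\||z|^{-1}\big\|_{L^4}\simeq h^{-1/2}$. Your H\"older bound therefore gives only $\|c\|_{L^2}^2\leq Ch^{-1}$, hence $h^2\|c\|_{L^2}^2\leq Ch\gg h^2|\log h|$, and you cannot conclude $h^2\int|D^2\tilde y|^2\leq Ch^2|\log h|$. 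No rearrangement of H\"older exponents will save this: to make $\int|z|^{-p}$ merely logarithmic you need $p=2$, which forces an $L^\infty$ bound on $D\tilde y$ that you do not have.

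The missing observation is structural: $|D\tilde y\cdot\hat z|^2=(g_{\tilde y})_{\hat z\hat z}$ and $(g_\Delta)_{\hat z\hat z}=1$, so
\[
\big|\,|D\tilde y\cdot\hat z|^2-1\,\big|\leq |g_{\tilde y}-g_\Delta|\,.
\]
Writing $\int|z|^{-2}|D\tilde y\cdot\hat z|^2=\int|z|^{-2}+\int|z|^{-2}\big(|D\tilde y\cdot\hat z|^2-1\big)$ and applying Cauchy--Schwarz to the second piece with $\||z|^{-2}\|_{L^2}\leq Ch^{-1}$ and $\|g_{\tilde y}-g_\Delta\|_{L^2}\leq Ch|\log h|^{1/2}$ gives the sharp bound $\|c\|_{L^2}^2\leq 2\pi\Delta^4|\log h|+o(|\log h|)$. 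This is what the paper does.

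Note a second subtlety this exposes in your final step: with the correct estimate, $\|c\|_{L^2}^2$ is of the \emph{same} order $|\log h|$ as the lower bound on $\|D^2\tilde y\|_{L^2}^2$, not smaller. The crude inequality $\|D^2\tilde y\|_{L^2}^2\leq 2\|D^2\tilde y-c\|_{L^2}^2+2\|c\|_{L^2}^2$ then fails to give a positive lower bound for general $\Delta\in(0,1)$. One must instead use the triangle inequality in $L^2$ itself,
\[
\|D^2\tilde y-c\|_{L^2}\geq \|D^2\tilde y\|_{L^2}-\|c\|_{L^2}\geq \sqrt{2\pi|\log h|}\,(\Delta-\Delta^2)-o\!\left(\sqrt{|\log h|}\right),
\]
and then the strict inequality $\Delta<1$ to conclude.
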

\begin{proof}
Assume that $\Jhdt(y)\leq Ch^2|\log h|$ (otherwise there is nothing to show). Setting $\tilde y:=y\circ j$, 
we may estimate
\begin{equation}
\begin{split}
  \int_{B_{1-5h}\setminus B_{5h}}|z|^{-2}&|D \tilde y(z)\cdot \hat
  z|^2\d\L^2(z)\\
\leq &
  \int_{B_{1-5h}\setminus B_{5h}}|z|^{-2}\d\L^2(z)+ \int_{B_{1-5h}\setminus
    B_{5h}}|z|^{-2}\left(|D \tilde y(z)\cdot \hat z|^2-1\right)\d\L^2(z)\\
\leq & 2\pi (|\log h|-C)\\
&+\left(\int_{B_{1-5h}\setminus
    B_{5h}}|z|^{-4}\d\L^2(z)\right)^{1/2}\left(\int_{B_{1-5h}\setminus
    B_{5h}}|g_{\tilde y}-\gd|^2\d\L^2(z)\right)^{1/2}\\
\leq & 2\pi |\log h|+C \left(h^{-2}\right)^{1/2} \left(h^2|\log
  h|\right)^{1/2}\\
\leq & 2\pi |\log h|+o(|\log h|),
\end{split}\label{eq:52}
\end{equation}
where we have used the Cauchy-Schwarz inequality and
Lemma \ref{lem:cov} in the second inequality.
Again by
Lemma \ref{lem:cov} and Young's inequality, we have
\[
\begin{split}
  h^2\int_{B_{1-5h}\setminus B_{5h}}|D^2\tilde y|\d\L^2\leq &C h^2\left(|\log
    h|+\int_{B_{1-5h}\setminus B_{5h}}|z|^{-2}|D\tilde y\cdot\hat z|^2\d\L^2(z)\right)\\
  \leq &Ch^2|\log h|\,.
\end{split}
\]
Hence, by Proposition \ref{prop:interpol1}, the assumption of Proposition
\ref{prop:diadicannuli} is fulfilled for $\tilde y$ with $\alpha=\frac32$. By
Proposition \ref{prop:diadicannuli}, 

\begin{equation}
\int_{B_{1-5h}\setminus B_{5h}}|D\tilde y|^2\geq 2\pi\Delta^2\left(|\log h|
  -\frac32\log|\log h|-C\right)\,.\label{eq:51}
\end{equation}
By \eqref{eq:52}, \eqref{eq:51} and the triangle inequality, we have
\[
\begin{split}
  \Big\|D^2\tilde y-&\Delta^2|z|^{-1}\left(D\tilde y\cdot \hat z\right)\otimes\hat z^\bot\otimes\hat
    z^\bot\Big\|_{L^2(B_{1-5h}\setminus B_{5h})}^2\\
\geq &\left(\sqrt {2\pi
      \Delta^2 |\log h|}-\sqrt {2\pi \Delta^4 |\log h|}\right)^2-o(|\log h|)\\
  =&2\pi\Delta(1-\Delta)|\log h|-o(|\log h|)\,.
\end{split}
\]
Combining this last estimate with Lemma \ref{lem:cov} proves the  proposition.
\end{proof}
\subsection{Reduction of domain and covers by boxes}
As we explained at the beginning of the present section, the core step in the passage
from two to three dimensions is to cover the three-dimensional domain by small boxes, and to
approximate the deformation by a Euclidean motion on each of these boxes. 
In this approach, one needs to take special care of the boundary of the
two-dimensional domain. Part of that boundary (namely, $\partial B_{1,\Delta}\cap
\partial B_1$) has already been taken care of by ``shrinking'' $B_{1,\Delta}$ to
$\ohd$ in the two-dimensional plate theory, as we did in the previous subsection.
The present subsection's aim is to control the boxes that are close to the
remaining part of the boundary. 
The proof  of this subsection's main statement  (Lemma \ref{lem:Qstar} below) is straightforward; however, we will need to introduce a certain amount of auxiliary
notation. \\
\\
We will be interested in coverings of $\ohd$ by small squares of sidelength
$h$ or $3h$. For $a\in \R^2$ and $r>0$, let $Q_{a,r}:=a+[-r/2,r/2]^2$. We define

\begin{equation}
\begin{split}
  \Q_h:=&\{a\in h\Z^2:Q_{a,h}\cap \ohd\neq \emptyset\}\\
 \Q_h^*:=&\{a\in\Q_h:Q_{a,3h}\not\subset B_{1,\Delta}\}\,.
\end{split}\label{eq:41}
\end{equation}
Furthermore, let
\[
\tilde
B_{1,\Delta}:=\left\{x\in B_1\setminus\{0\}:\left(1-\sqrt{1-\Delta^2}\right)\pi<\arccos
\frac{x_1}{|x|}\leq \pi\right\}\,.
\]
Now we will define maps $f_\Delta^{(\pm)}:\tilde B_{1,\Delta}\to B_{1,\Delta}$ that
translate the cut in the domain in angular direction. 
Let
$\phi_\Delta:=(1-\sqrt{1-\Delta^2})2\pi$ and let the rotation $S_\Delta\in
SO(2)$ be defined by
\[
S_{\Delta}=\left(\begin{array}{cc}\cos\phi_\Delta & -\sin\phi_\Delta\\
\sin\phi_\Delta&\cos\phi_\Delta \end{array}\right)\,.
\]
Let 
\[
v^{\pm}:=\left(\cos \sqrt{1-\Delta^2} \pi,\pm\sin
\sqrt{1-\Delta^2}\pi\right)\,.
\]
Note that $v^\pm$ are chosen such that $[0,v^+)\cup[0,v^-)=\partial
\tilde B_{1,\Delta}\setminus\partial B_1$, where by $[a,b)$, we denote the line segment
connecting $a$ with  $b$ in $\R^2$, including $a$, but excluding
$b$. Furthermore, note that $S_{\Delta}v^-=v^+$.\\
Now let
\[
\begin{split}
  \tilde B_{1,\Delta,+}:=&\{x\in \tilde B_{1,\Delta}:x\cdot (v^+)^\bot\geq 0\}\\
  \tilde B_{1,\Delta,-}:=&\{x\in \tilde B_{1,\Delta}:x\cdot (v^-)^\bot\leq 0\}
\end{split}
\]
and define $f_{\Delta,\pm}$ by
\[
\begin{split}
  f_{\Delta,+}(x)=&\begin{cases} S_\Delta^{-1} x &\text{ if }x\in \tilde B_{1,\Delta,+}\\
    x &\text{ else}\end{cases}\\
   f_{\Delta,-}(x)=&\begin{cases} S_\Delta x &\text{ if }x\in \tilde B_{1,\Delta,-}\\
     x &\text{ else}\end{cases}
\end{split}
\]
For a sketch of $\tilde B_{1,\Delta,\pm}$ and $f_{\Delta,\pm}$, see Figure \ref{fig:b1d}.

\begin{figure}[h]
\begin{center}
\includegraphics[width=.7\textwidth]{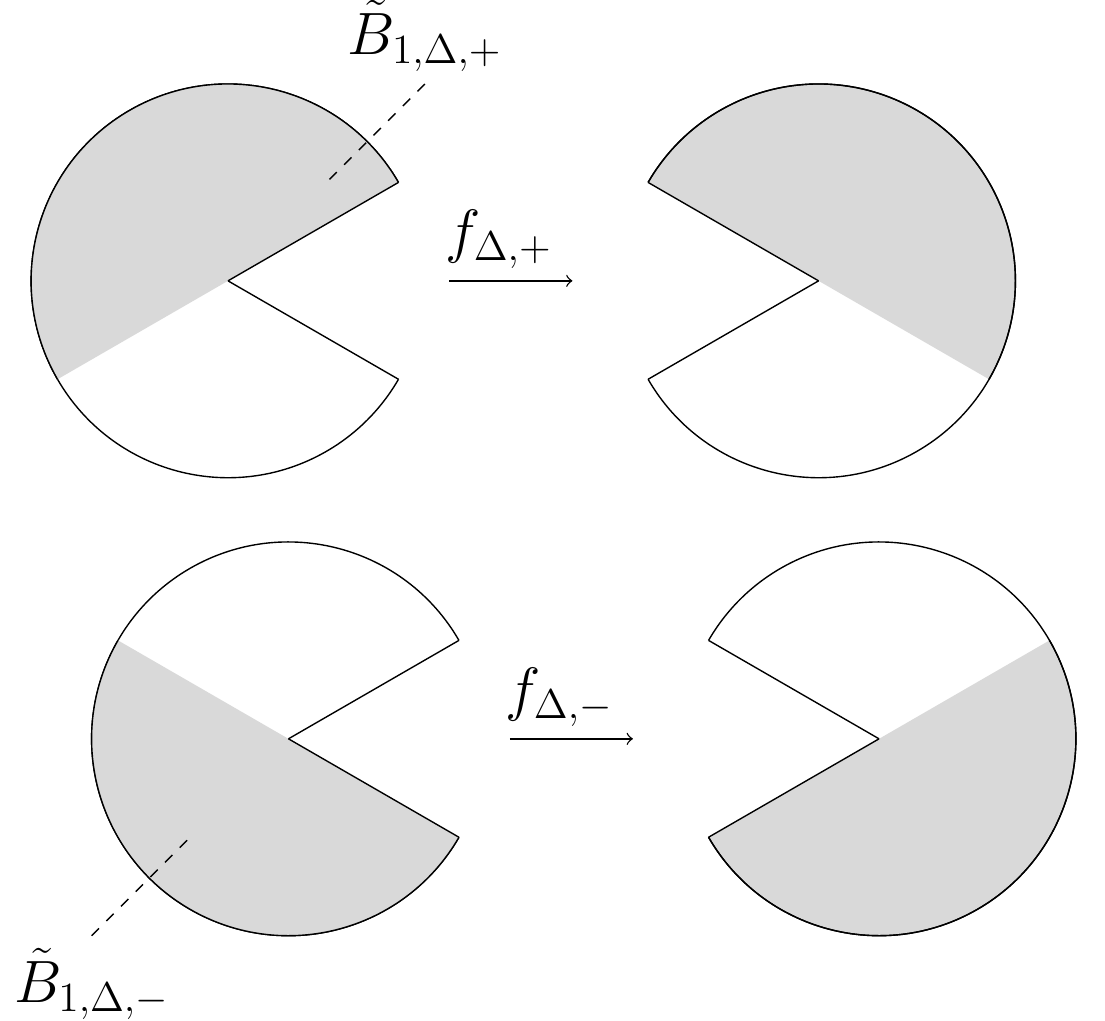}
\caption{The maps $f_{\Delta,\pm}$. Colored in gray: in the top right, the set
  $\tilde B_{1,\Delta,+}$;
in the top right,  $f_{\Delta,+}(\tilde B_{1,\Delta,+})$; in the bottom left, $B_{1,\Delta,-}$;
in the bottom right,  $f_{\Delta,-}(\tilde B_{1,\Delta,-})$.\label{fig:b1d}}
\end{center}
\end{figure}

Note that for $x\in \tilde B_{1,\Delta}$,

\begin{equation}
f^{-1}_{\Delta,-}\circ f_{\Delta,+}(x)=\begin{cases}S_{\Delta}^{-1}x &\text{ if }
  S_{\Delta}^{-1}x \in \tilde B_{1,\Delta}\\S_{\Delta}^{-2}x&\text{
    else.}\end{cases}\label{eq:43}
\end{equation}
For $Y\in \Ahd$, we define $Y^{(\pm)}:\tilde B_{1,\Delta}\times[-h/2,h/2]\to\R^3$ by

\begin{equation}
Y^{(\pm)}(x',x_3)=Y(f_{\Delta,\pm}(x'),x_3)\,.\label{eq:40}
\end{equation}

The main statement of this subsection is
\begin{lemma}
\label{lem:Qstar}
Let $Y\in \Ahd$. Then
  \begin{equation}
    \begin{split}
      \sum_{a\in
        \Q_h^*}\int_{Q_{a,3h}\times[-h/2,h/2]}&\dist^2(DY^{(\pm)},SO(3))\d\L^3\\
&\leq
      3\int_{
        B_{1,\Delta}\times[-h/2,h/2]}\dist^2(DY,SO(3))\d\L^3\,.\label{eq:39}
    \end{split}
\end{equation}

\end{lemma}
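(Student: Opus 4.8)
The plan is to show that the boxes counted on the left-hand side of \eqref{eq:39} cover the three-dimensional domain $B_{1,\Delta}\times[-h/2,h/2]$ with bounded overlap, after accounting for the fact that the maps $f_{\Delta,\pm}$ move the ``cut'' boxes near $\partial B_{1,\Delta}\setminus\partial B_1$ onto a portion of the domain where $Y$ is genuinely defined. First I would fix the sign, say $Y^{(+)}$, the other case being symmetric. For $a\in\Q_h^*$ we have $Q_{a,3h}\not\subset B_{1,\Delta}$, so these boxes straddle one of the two straight edges $[0,v^\pm)$ of the angular cut; the role of $f_{\Delta,+}$ is precisely to pull the part of such a box lying in $\tilde B_{1,\Delta,+}$ back across the cut by the rotation $S_\Delta^{-1}$, so that $f_{\Delta,+}(Q_{a,3h})$ lands inside $B_{1,\Delta}$ (for $h$ small, uniformly in $a$). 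Then by the change of variables $x'\mapsto f_{\Delta,+}(x')$, which is piecewise a rotation hence volume-preserving, together with the fact that $DY^{(+)}(x',x_3)=DY(f_{\Delta,+}(x'),x_3)\,Df_{\Delta,+}(x')$ and $Df_{\Delta,+}\in SO(2)\times\{1\}\subset SO(3)$ almost everywhere (so $\dist(DY^{(+)},SO(3))=\dist(DY,SO(3))$ pointwise a.e. under the substitution), each summand equals $\int_{f_{\Delta,+}(Q_{a,3h})\times[-h/2,h/2]}\dist^2(DY,SO(3))\,\d\L^3$.

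It then remains to bound the multiplicity of the family $\{f_{\Delta,+}(Q_{a,3h})\}_{a\in\Q_h^*}$ inside $B_{1,\Delta}$. The boxes $\{Q_{a,3h}\}_{a\in h\Z^2}$ have overlap number at most $9$ (each point of $\R^2$ lies in at most $3\times 3$ of the triple-size boxes on the $h$-lattice); restricting to $a\in\Q_h^*$ only lowers this, and applying the piecewise rotation $f_{\Delta,+}$ does not increase overlaps on each of the two pieces $\tilde B_{1,\Delta,+}$ and its complement separately. The only subtlety is that $f_{\Delta,+}$ maps two disjoint regions ($\tilde B_{1,\Delta,+}$ and the rest of $\tilde B_{1,\Delta}$) into $B_{1,\Delta}$, so boxes from ``both sides of the cut'' may now overlap after the map; but since each $Q_{a,3h}$ with $a\in\Q_h^*$ meets $\ohd$ and is within distance $3h\sqrt2$ of the cut, the images stay in a thin angular neighbourhood of the cut, and a direct geometric count shows the total multiplicity is bounded by a universal constant — in fact one can afford the crude bound $3$ claimed in the statement by noting that after the change of variables the integrand is supported near the cut and the three relevant contributions (boxes fully inside, boxes straddling, and their rotated images) each inject into $B_{1,\Delta}\times[-h/2,h/2]$ with the $\dist^2$ integrand bounded by the full integral.

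Concretely, the cleanest route is: (i) partition $\Q_h^*$ according to which edge of the cut the box meets and on which side of $[0,v^+)$ its bulk lies; (ii) on each part, $f_{\Delta,+}$ is a single rotation, so $\sum \int_{Q_{a,3h}}(\cdots)=\sum\int_{f_{\Delta,+}(Q_{a,3h})}(\cdots)$ with the rotated boxes having the same bounded overlap as the original lattice boxes; (iii) sum the at most three parts and use that each contributes at most $\int_{B_{1,\Delta}\times[-h/2,h/2]}\dist^2(DY,SO(3))\,\d\L^3$ because the rotated boxes lie inside $B_{1,\Delta}$ and have overlap number controlled so that the sum of the local integrals is dominated by (a constant times, here absorbed into the count of parts) the global one. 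The main obstacle I anticipate is purely bookkeeping: verifying that for all small $h$ and all $a\in\Q_h^*$ one indeed has $f_{\Delta,+}(Q_{a,3h})\subset B_{1,\Delta}$ — i.e. that shifting a box of size $3h$ straddling the cut by the angle $\phi_\Delta$ lands it safely inside the sector — and pinning down the overlap constant so that the final bound is exactly $3$ rather than some larger universal constant; both are elementary but require care with the geometry of the sector near its apex (which is why $\ohd$ excludes $B_{5h}$).
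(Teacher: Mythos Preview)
Your approach is correct in spirit and would ultimately work, but it is more convoluted than the paper's, and the ``bookkeeping obstacle'' you anticipate is entirely avoidable. The paper reverses your order of operations: instead of first changing variables on each box and then controlling the overlap of the \emph{images} $f_{\Delta,+}(Q_{a,3h})$ inside $B_{1,\Delta}$ (which is awkward precisely because $f_{\Delta,+}$ is only piecewise a rotation, so the images are not lattice boxes and can collide across the cut), the paper first observes that $Q_{a,3h}\subset\tilde B_{1,\Delta}$ for every $a\in\Q_h^*$, bounds the overlap of the \emph{original} lattice boxes $\{Q_{a,3h}\}_{a\in\Q_h^*}$ inside $\tilde B_{1,\Delta}$ (trivial, since they sit on $h\Z^2$), and only then performs a \emph{single} global change of variables
\[
\int_{\tilde B_{1,\Delta}}|g\circ f_{\Delta,\pm}|\,\d\L^2=\int_{B_{1,\Delta}}|g|\,\d\L^2,
\]
valid because $f_{\Delta,\pm}:\tilde B_{1,\Delta}\to B_{1,\Delta}$ is a measure-preserving bijection off a null set. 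This order bypasses all three of your worries at once: whether $f_{\Delta,+}(Q_{a,3h})\subset B_{1,\Delta}$, how boxes from the two sides of the cut overlap after the map, and the case splitting (i)--(iii) you propose. The constant in the lemma then comes purely from the lattice overlap of the $3h$-boxes, with no geometry of the sector near the apex entering at all.
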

\begin{proof}
First, we note that as a direct consequence of the definition of $\Q_h^*$, we
have that
  for every $a\in\Q_h^*$, $Q_{a,3h}\subset \tilde B_{1,\Delta}$.
Next,  
for $g\in L^1(B_{1,\Delta})$, let
\[
g_{\Delta,\pm}=g\circ f_{\Delta,\pm}\,.
\]
Note that away from their discontinuity sets (which are $\L^2$ null sets), $f_{\Delta,\pm}$ are isometries. Hence, we have
\begin{equation}
\begin{split}
  \sum_{a\in \Q_h^*}\int_{Q_{a,3h}}|g_{\Delta,\pm}|\d\L^2\leq&
  3\int_{\tilde B_{1,\Delta}}|g_{\Delta,\pm}|\d\L^2\,\\
  = & 3\int_{ B_{1,\Delta}}|g|\d\L^2\,.
\end{split}\label{eq:37}
\end{equation}
For every $x'\in \tilde B_{1,\Delta}$ that is not on the discontinuity
set of $f_{\Delta,\pm}$, there exists $R\in SO(3)$ such that we have
\[
DY^{(\pm)}(x',x_3)=DY(f_{\Delta,\pm}(x'),x_3) R\,, 
\]
and hence for every $(x',x_3)\in \tilde B_{1,\Delta}\times [-h/2,h/2]$,
\begin{equation}
\dist^2(DY^{(\pm)}(x',x_3),SO(3))=\dist^2(DY(f_{\Delta,\pm}(x'),x_3),SO(3))\,.\label{eq:38}
\end{equation}
For fixed $x_3\in [-h/2,h/2]$, combining \eqref{eq:37}, \eqref{eq:38} and the fact that the discontinuity set
of  $f_{\Delta,\pm}$ is an $\L^2$ null set, yields 
\[
\sum_{a\in\Q_h^*}\int_{Q_{a,3h}}\dist^2(DY^{(\pm)}(x',x_3),SO(3))\d\L^2( x')\leq
3\int_{B_{1,\Delta}}\dist^2(DY(\tilde x',x_3),SO(3))\d\L^2( \tilde x')\,,
\]
where we have used the coarea formula to make the change of variables
$\tilde x'=f_{\Delta\pm}(x')$ on the right hand side.
Integrating in $x_3$ yields
the claim of the lemma.
\end{proof}

% Let us first introduce some notation that will help us to work in coordinate systems that are well suited to our purpose (namely, coordinate systems in which the reference metric is flat).
% We write
% \[
% \begin{split}
%   \R^2_+=&\R^2\setminus\{(x_1,0):x_1\geq 0\}\\
%   \R^2_-=&\R^2\setminus\{(x_1,0):x_1\leq 0\}\,.
% \end{split}
% \]
% We define  angular coordinates $\varphi^{(-1)}$, $\varphi^{(0)}$, $\varphi^{(1)}$ on $\R^2$, by setting 
% $x=(|x|\cos\varphi^{(i)},|x|\sin\varphi^{(i)})$ and setting the domains and ranges as follows,
% \[
% \begin{split}
%   \varphi^{(-1)}:&\R^2_+\to (-2\pi,0)\\
%   \varphi^{(0)}:&\R^2_-\to (-\pi,\pi)\\
%   \varphi^{(1)}:&\R^2_+\to (0,2\pi)\,.
% \end{split}
% \]
% Note that $\varphi^{(0)}$ agrees with $\varphi^{(1)}$ on the upper half plane, and with $\varphi^{(-1)}$ on the lower half plane.
% Now we define maps that ``stretch'' the angular direction. Let 
% $\iota_{\Delta}^{(\pm 1)}:\R^2_+\to \R^2$ and $\iota_\Delta^{(0)}:\R^2_-\to \R^2$ be defined by 
% \[
% \iota_\Delta^{(i)}(x)= |x|\left(\cos\frac{\varphi^{(i)}(x)}{\sqrt{1-\Delta^2}},
% \sin\frac{\varphi^{(i)}(x)}{\sqrt{1-\Delta^2}}\right)
% \]
% with $i\in\{0,\pm 1\}$.

\subsection{Proof of Theorem \ref{thm:3d}}
\begin{lemma}
\label{lem:upperbd3d}
 We have
\[
 \inf_{Y\in\Ahd}\Ehd(Y)\leq C h^2|\log h|\,.
\]
\end{lemma}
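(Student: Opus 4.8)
The plan is to exhibit a single explicit competitor, obtained from the Kirchhoff--Love ansatz over the singular cone, truncated at scale $h$ near the apex; it is cleanest to build the deformation directly on the \emph{glued} domain. Recall from \eqref{eq:46} that $D(\ydel\circ\iota_\Delta)^TD(\ydel\circ\iota_\Delta)=D\iota_\Delta^T\,\gd\,D\iota_\Delta=\id_{2\times 2}$, so $P:=D(\ydel\circ\iota_\Delta)\in O(2,3)$; writing $\nu_{\ydel}$ for the unit normal of the cone $\ydel$ (smooth away from $0$), the columns of $P$ span the cone's tangent plane, whence $(P\,|\,\nu_{\ydel})\in SO(3)$ (the orientation being right because $\det D\iota_\Delta>0$). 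Fix a large constant $\lambda_0=\lambda_0(W)$, to be chosen below, and define $\bar Y\colon B_1\times[-h/2,h/2]\to\R^3$ by
\[
\bar Y(z,x_3)=\ydel(z)+x_3\,\nu_{\ydel}(z)\qquad\text{for }|z|\ge\lambda_0 h\,,
\]
and, on $B_{\lambda_0 h}\times[-h/2,h/2]$, by any Lipschitz extension (e.g.~Kirszbraun's theorem) of the boundary datum $\ydel+x_3\nu_{\ydel}$ prescribed on the cylinder $\{|z|=\lambda_0 h\}\times[-h/2,h/2]$, with Lipschitz constant $\le C$; this is possible because, using $|D\nu_{\ydel}|\le C|z|^{-1}\le C(\lambda_0 h)^{-1}$ on that cylinder and $|x_3|\le h/2$, the datum is Lipschitz with constant $\le C$ on a set of diameter comparable to $h$. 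Finally set $Y:=\bar Y\circ\iota_\Delta^{(3)}$, so that $Y\circ(\iota_\Delta^{(3)})^{-1}=\bar Y$.

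First I would check $Y\in\Ahd$. The map $\bar Y$ is continuous, with $D\bar Y=(D\ydel+x_3D\nu_{\ydel}\,|\,\nu_{\ydel})$ for $|z|\ge\lambda_0 h$ (where $|D\ydel|\le C$, $|\nu_{\ydel}|=1$, $|x_3D\nu_{\ydel}|\le Ch|z|^{-1}$) and $|D\bar Y|\le C$ on $B_{\lambda_0 h}\times[-h/2,h/2]$; since $\int_{\lambda_0 h\le|z|<1}|z|^{-2}\,\d\L^2\le C|\log h|$, we get $D\bar Y\in L^2$ and hence $\bar Y\in W^{1,2}(B_1\times[-h/2,h/2];\R^3)$. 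As $\iota_\Delta^{(3)}$ is bi-Lipschitz by \eqref{eq:45}, also $Y\in W^{1,2}(B_{1,\Delta}\times[-h/2,h/2];\R^3)$, so $Y\in\Ahd$.

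Next I would estimate $\Ehd(Y)=\tfrac1h\int_{\Ohd}W(DY)\,\d\L^3$. Changing variables via $\iota_\Delta^{(3)}$ (Jacobian $\simeq1$ by \eqref{eq:45}), this is bounded by $\tfrac{C}{h}\int_{B_1\times[-h/2,h/2]}W\big(D\bar Y\,D\iota_\Delta^{(3)}\big)\,\d\L^3$. The key identity is that for $|z|\ge\lambda_0 h$,
\[
D\bar Y\,D\iota_\Delta^{(3)}=(P\,|\,\nu_{\ydel})+x_3\big(D\nu_{\ydel}\,D\iota_\Delta\,|\,0\big)\,,
\]
because $D\ydel\,D\iota_\Delta=D(\ydel\circ\iota_\Delta)=P$ (points being paired through $\iota_\Delta$). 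Since $(P\,|\,\nu_{\ydel})\in SO(3)$ and $|x_3\,D\nu_{\ydel}\,D\iota_\Delta|\le Ch|z|^{-1}$, this gives $\dist\big(D\bar Y\,D\iota_\Delta^{(3)},SO(3)\big)\le Ch|z|^{-1}$; choose $\lambda_0=\lambda_0(W)$ so large that this distance is below the radius of the neighbourhood $\mathcal F_0$ in \eqref{eq:31}, whence $W(D\bar Y\,D\iota_\Delta^{(3)})\le C\,h^2|z|^{-2}$ on $\{|z|\ge\lambda_0 h\}\times[-h/2,h/2]$. On $B_{\lambda_0 h}\times[-h/2,h/2]$ one has $|D\bar Y\,D\iota_\Delta^{(3)}|\le C(W)$, so, $W$ being continuous, $W(D\bar Y\,D\iota_\Delta^{(3)})\le\sup_{|F|\le C(W)}W(F)\le C$ there. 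Hence
\[
\Ehd(Y)\le\frac{C}{h}\left(h\int_{\lambda_0 h\le|z|<1}\frac{h^2}{|z|^2}\,\d\L^2+h\,|B_{\lambda_0 h}|\right)\le Ch^2\big(|\log h|+C\big)+Ch^2\le Ch^2|\log h|\,,
\]
which is the claim (this computation also supplies the estimate referred to around \eqref{eq:19}).

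The only genuinely delicate point is the core $|z|<\lambda_0 h$. Because $W$ is merely continuous, with the two-sided quadratic control holding only near $SO(3)$, the cone ansatz $\ydel+x_3\nu_{\ydel}$ cannot be carried all the way to the apex: there $|x_3D\nu_{\ydel}|\simeq h|z|^{-1}$ is of order one, so $D\bar Y\,D\iota_\Delta^{(3)}$ leaves $\mathcal F_0$ and, worse, $\int|x_3D\nu_{\ydel}|^2$ diverges, destroying even membership in $W^{1,2}$. This is what forces both the truncation at radius $\lambda_0 h$ and its replacement by a Lipschitz extension, whose sole role is to keep $D\bar Y$ bounded so that $W(D\bar Y\,D\iota_\Delta^{(3)})$ is controlled by a constant on a set of volume $\simeq h^3$, contributing only $O(h^2)$. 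Everything else is the routine Kirchhoff--Love computation together with the elementary fact that the cone's second fundamental form has $L^1$-norm $\simeq|\log h|$ once the apex region of radius $\simeq h$ is removed.
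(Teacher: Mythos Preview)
Your proof is correct. Both you and the paper construct the test deformation via the Kirchhoff--Love ansatz over the singular cone $\ydel$, with a modification in a core of radius comparable to $h$; outside the core, both arguments reduce to the observation that $D(\ydel\circ\iota_\Delta)$ lands in $O(2,3)$, the normal augments it to $SO(3)$, and the remaining term $x_3 D\nu_{\ydel}$ contributes $O(h^2|z|^{-2})$ to the density. The tactical differences are twofold. First, the paper smooths the two-dimensional midplane map with an explicit radial cutoff $\eta(|\cdot|/h)$ (Lemma~\ref{lem:upperbd2}), obtaining a $C^\infty$ competitor $y^h$ and then routing the energy estimate through the plate functional $\Jhd$; you instead keep the exact cone outside $\{|z|=\lambda_0 h\}$ and invoke Kirszbraun to fill the solid core in three dimensions, yielding a merely Lipschitz competitor that you estimate directly. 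Second, the paper's use of the compactness argument (``on any compact $\mathcal F$ there is $C(\mathcal F)$ with $W\le C\dist^2(\cdot,SO(3))$'') covers both regions at once because $|Dy^h|\le C$ globally, whereas you separate regimes and choose $\lambda_0$ so that the outer region stays inside $\mathcal F_0$. Your route is a bit more self-contained (it does not appeal to Lemma~\ref{lem:upperbd2}), at the cost of the black box of Kirszbraun; the paper's route gives an explicit smooth competitor and reuses the two-dimensional construction already in hand.
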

\begin{proof}
  Let $y^h:\ohd\to\R^3$ be as in the statement of Lemma \ref{lem:upperbd2}.
Now, for $(x',x_3)\in\Ohd$,  we set
\[
Y^h(x',x_3)= y^h(x')+x_3\nu_{y^h}(x')\,.
\]
We compute
\[
DY^h(x',x_3)=Dy^h(x')+\nu_{y^h}(x')\otimes e_3+ x_3 D\nu_{y^h}(x')\,.
\]
By definition of the unit normal $\nu_{y^h}$, we have
\[
\dist^2(Dy^h(x')+\nu_{y^h}(x')\otimes e_3,SO(3))\leq
\dist^2(Dy^h(x'),O(2,3))\,.
\]
By Lemma \ref{lem:upperbd2}, we have $|D\nu_{y^h}|^2\leq C |D^2y^h|^2$,
and hence, we get by integrating in $x_3$,
\begin{equation}
  \begin{split}
    \fint_{[-h/2,h/2]}\dist^2&(DY^h(x',x_3),SO(3))\d x_3\\
&\leq
    C\left(\dist^2(Dy^h(x'),O(2,3))+h^2|D^2y^h(x')|^2\right)\,\label{eq:53}
  \end{split}
\end{equation}
for all $x'\in \ohd$.\\
Since $W(F)<C\dist^2(F,SO(3))$ in a neighborhood of $SO(3)$ by \eqref{eq:31}, we have that for
any compact set $\mathcal F\subset \R^{3\times 3}$, there exists a constant $C=C(W,\mathcal
F)$ such that
\[
W(F)<C\dist^2(F,SO(3)) \quad\text{ for }F\in\mathcal F\,.
\]
Since $|Dy^h|\leq C$, we get 
\[
W(DY^h)<C(W)\dist^2(Dy^h,SO(3))\,.
\]
Hence, using \eqref{eq:53},
\[
\begin{split}
  \Ehd(Y^h)=&\int_{B_{1,\Delta}}\d x'\fint_{[-h/2,h/2]}\d x_3 W(DY^h)\\
  \leq &C(W)\int_{B_{1,\Delta}}\d x'\left(\dist^2(Dy^h,O(2,3))+h^2|D^2y^h|^2\right)\\
\leq & C(W) h^2|\log h|\,,
\end{split}
\]
where the last estimate holds by Lemma \ref{lem:upperbd2}.
This proves the  lemma.
\end{proof}

\begin{proof}[Proof of Theorem  \ref{thm:3d}]
The upper bound has been proved in Lemma
\ref{lem:upperbd3d}, while the existence of a minimizer follows from coercivity
and lower semicontinuity of $\Ehd$ in $W^{1,2}(B_{1,\Delta}\times [-h/2,h/2];\R^3)$. It remains to show the lower bound.\\
{\bf Step 1.} Extension of the domain and mollification.
Assume  that $Y\in W^{1,2}(B_{1,\Delta}\times[-h/2,h/2];\R^3)$. We define $y^{(1)}:B_{1,\Delta}\to\R^3$ by setting
\[
y^{(1)}(x'):=\fint_{[-h/2,h/2]} Y(x',x_3)\d x\,.
\]
For each $x'\in\tilde B_{1,\Delta}$, we set
\[
y^{(1,\pm)}(x'):= y^{(1)}(f_{\Delta,\pm}(x'))\,.
\]
Next, we mollify; let $\eta\in C^\infty_0(\R^2)$ with
$\int_{\R^2}\eta=1$ and $\supp\eta\subset B_1$. We set
$\eta_h(\cdot):=h^{-2}\eta(\cdot/h)$. Now, for  $x'\in \bigcup_{a\in\Q_h\setminus\Q_h^*}Q_{a,h}$, we set
\[
y^{(2)}(x')=(\eta_h*y^{(1)})(x')\,.
\]
For $x'\in \tilde B_{1,\Delta}$, we define
\[
y^{(2,\pm )}(x')=(\eta_h*y^{(1,\pm )})(x')\,.
\]
Since linear transformations and convolution commute, we have by \eqref{eq:43},
\begin{equation}
y^{(2,+)}(x')=y^{(2,-)}(S_\Delta^{-1} x')\quad \text{ for all } x'\in \tilde
B_{1,\Delta}\text{ with } S_\Delta^{-1}x'\in \tilde B_{1,\Delta}\,.\label{eq:42}
\end{equation}
\newcommand{\ohdp}{\omega_{h,\Delta,+}}
\newcommand{\ohdm}{\omega_{h,\Delta,-}}
\newcommand{\ohdpm}{\omega_{h,\Delta,\pm}}
Now let
\[
\ohdpm=\left\{x\in \bigcup_{a\in\Q^*_h}Q_{a,h}\cap\ohd:f_{\Delta,\pm}(x')=x' \right\}\,.
\]
Note that we have 
\[
\ohd= \ohdp\cup\ohdm\cup\bigcup_{a\in\Q_h\setminus\Q_h^*}Q_{a,h}\,,
\]
where the sets on the right hand side are mutually disjoint.
On $\ohdp\cup\ohdm$, we define $y^{(2)}$ by
\[
y^{(2)}(x')=y^{(2,\pm)}(x') \quad\text{ if } x'\in \ohdpm\,.
\]
By \eqref{eq:42}, we have
\[
y^{(2)}\circ j\in W^{2,2}(B_{1-5h}\setminus B_{5h};\R^3)\,.
\]
% By Proposition \ref{prop:Jhd}, we have
% \[
% \Jhd(y^{(2)})\geq C h^2|\log h|\,.
% \]
{\bf Step 2.} Application of Theorem \ref{thm:fjm} on small boxes.
Let $a\in \Q_h\setminus \Q_h^*$. 
By Theorem \ref{thm:fjm},  there exists $R_{a,h}\in SO(3)$ such that
\begin{equation}
\int_{Q_{a,3h}\times[-h/2,h/2]}\dist^2 (DY,SO(3))\d x\leq C
\int_{Q_{a,3h}\times[-h/2,h/2]}|DY-R_{a,h}|^2\d x\,.\label{eq:8}
\end{equation}
For  $a\in  \Q_h^*$, there exist $R_{a,h}^{(\pm)}\in SO(3)$ such that
\begin{equation}
\int_{Q_{a,3h}\times[-h/2,h/2]}\dist^2 (DY^{(\pm)},SO(3))\d x\leq C
\int_{Q_{a,3h}\times[-h/2,h/2]}|DY^{(\pm)}-R_{a,h}^{(\pm)}|^2\d x\,,\label{eq:8}
\end{equation}
where $Y^{(\pm)}$ has been defined in \eqref{eq:40}.
For $a\in\Q_h\setminus \Q_h^*$,  let the 3 by 2 matrix consisting of the first two columns of
$R_{a,h}$ be
denoted by $\hat R_{a,h}$. For $a\in \Q_h$, let $\hat R_{a,h}^{(\pm)}$ denote the
first two columns of $R_{a,h}^{(\pm)}$ respectively.\\
{\bf Step 3.} Conclusion.
Note that for every $a\in \Q_h\setminus \Q_h^*$, and every $x'\in Q_{a,3h}$,
\[
\left|Dy^{(1)}(x')-\hat R_{a,h}\right|\leq \fint_{[-h/2,h/2]} \left|DY(x',x_3)-R_{a,h}\right|\d x_3 \,.
\]
An analogous formula holds for $a\in \Q_h^*$.
% Let 
% \[
% \mathcal R(x)=\sum_{a\in \Q_h}\hat R_{a,h}\chi_{a+[0,h]^2}\,.
% \]
Now we have

\begin{equation}
\begin{split}
  \int_{\ohd}\dist^2(Dy^{(2)},O(2,3))\d\L^2
  \leq & \sum_{a\in\Q_h\setminus \Q_h^*} \int_{Q_{a,h}}\dist^2(Dy^{(2)},O(2,3))\d \L^2\\
&+ \sum_{a\in\Q_h^*,i\in\{\pm \}} \int_{Q_{a,h}}\dist^2(Dy^{(2,i)},O(2,3))\d \L^2\\
\leq & \sum_{a\in\Q_h\setminus \Q_h^*} \int_{Q_{a,h}}|Dy^{(2)}-\hat R_{a,h}|^2\d \L^2\\
&+ \sum_{a\in \Q_h^*,i\in\{\pm \}} \int_{Q_{a,h}}|Dy^{(2,i)}-\hat R_{a,h}^{(i)}|^2\d \L^2
\end{split}\label{eq:30}
\end{equation}
We use the definition of $y^{(2)}$ and Young's inequality for convolutions to get (for $a\in\Q_h\setminus\Q_h^*$)
\[
\begin{split}
\int_{Q_{a,h}}|Dy^{(2)}-\hat R_{a,h}|^2\d \L^2\leq& \int_{Q_{a,h}}|\eta_h*(Dy^{(1)}-\hat R_{a,h})
|^2\d \L^2\\
\leq & C\int_{Q_{a,3h}}|Dy^{(1)}-\hat R_{a,h}|^2\d \L^2\,,
\end{split}
\]
with   analogous estimates for $a\in \Q_h^*$. Inserting in \eqref{eq:30}, we get

\begin{equation}
\begin{split}
  \int_{\ohd}&\dist^2(Dy^{(2)},O(2,3))\d \L^2 \\
\leq &C\Bigg(\sum_{a\in\Q_h\setminus \Q_h^*}\int_{Q_{a,3h}}|Dy^{(1)}-\hat R_{a,h}|^2\d \L^2\\
&+ \sum_{a\in\Q_h^*,i\in\{\pm \}}\int_{Q_{a,3h}}|Dy^{(1,i)}-\hat
R_{a,h}^{(i)}|^2\d \L^2\Bigg)\\
\leq & Ch^{-1}\Bigg(\sum_{a\in\Q_h\setminus \Q_h^*}\int_{Q_{a,3h}\times[-h/2,h/2]}|DY- R_{a,h}|^2\d\L^3\\
&+ \sum_{a\in\Q_h^*,i\in\{\pm \}}\int_{Q_{a,3h}\times[-h/2,h/2]}|DY^{(i)}-
R_{a,h}^{(i)}|^2\d \L^3\Bigg)\\
\leq & Ch^{-1}\Bigg(\sum_{a\in\Q_h\setminus \Q_h^*}\int_{Q_{a,3h}\times[-h/2,h/2]}\dist^2(DY,SO(3))\d \L^3\\
&+ \sum_{a\in\Q_h^*,i\in\{\pm \}}\int_{Q_{a,3h}\times[-h/2,h/2]}\dist^2(DY^{(i)},SO(3))\d \L^3\Bigg)\,.
\end{split}\label{eq:33}
\end{equation}
Using  Lemma \ref{lem:Qstar}  in \eqref{eq:33}, we get

\begin{equation}
\int_{\ohd}\dist^2(Dy^{(2)},O(2,3))\d \L^2\leq Ch^{-1}\int_{B_{1,\Delta}\times
  [-h/2,h/2]}\dist^2 (DY,SO(3))\d \L^3\,.\label{eq:34}
\end{equation}

We come to the estimate of the bending term, using again the definition of $y^{(2)}$
and Young's inequality for convolutions:

\begin{equation}
\begin{split}
  \int_{\ohd}|D^2y^{(2)}|^2\d\L^2 \leq &
  \sum_{a\in\Q_h\setminus\Q_h^*}\int_{Q_{a,h}}\left|D\eta_h*(Dy^{(1)}-\hat
    R_{a,h})\right|^2\d\L^2\\
&+\sum_{a\in\Q_h^*,i\in\{\pm \}}\int_{Q_{a,h}}\left|D\eta_h*(Dy^{(1,i)}-\hat
    R_{a,h}^{(i)})\right|^2\d\L^2\\
  \leq & Ch^{-2}\Bigg(\sum_{a\in\Q_h\setminus\Q_h^*}\int_{Q_{a,3h}}\left|Dy^{(1)}-\hat
    R_{a,h}\right|^2\d\L^2\\
&+\sum_{a\in\Q_h^*,i\in\{\pm \}}\int_{Q_{a,3h}}\left|Dy^{(1,i)}-\hat
    R_{a,h}^{(i)}\right|^2\d\L^2\Bigg)\,.
\end{split}\label{eq:35}
\end{equation}
Using the  chain of inequalities from \eqref{eq:33} starting from the second
one, and again Lemma \ref{lem:Qstar}, we get from \eqref{eq:35},
\begin{equation}
\int_{\ohd}|D^2y^{(2)}|^2\d x \leq 
Ch^{-3}\int_{B_{1,\Delta}\times[-h/2,h/2]}\dist^2(DY,SO(3))\d\L^3\,.\label{eq:36}
\end{equation}
Combining \eqref{eq:34} and \eqref{eq:36}, we get
\[
 \Jhd(y^{(2)})\leq Ch^{-1} \int_{B_{1,\Delta}\times[-h/2,h/2]}\dist^2(DY,SO(3))\d\L^3\,.
\]
By \eqref{eq:32}, this implies
\[
 \Jhd(y^{(2)})\leq Ch^{-1} \int_{B_{1,\Delta}\times[-h/2,h/2]}W(DY)\d\L^3=C\Ehd(Y)\,.
\]
By Proposition \ref{prop:Jhd},
this proves the lower bound, and completes
the proof of the theorem.
\end{proof}

\section{The \FvK setting -- Proof of Theorem \ref{thm:main}}
\label{sec:fvk-setting-proof}
The \FvK (FvK) plate model is a very popular model for phenomena
that include large deflections. Strictly speaking, there exists  no rigorous justification
for the validity of the model when large deflections occur. The FvK model can be derived from
three-dimensional nonlinear elasticity as a low energy Gamma-limit, see \cite{MR1916989,MR2210909}.
This rigorously justifies the FvK model, but only as a model for
 small deflections.\\
Nevertheless, the FvK equations, first formulated more than hundred
years ago \cite{von1910festigkeitsprobleme}, have a long and quite successful
history of describing large deformation phenomena in engineering, from the
design of submarine hulls over paper crumpling
\cite{MR2023444,audoly2010elasticity,Lobkovsky01121995,1997PhRvE..55.1577L} and wrinkling to the mechanical properties of cell walls
\cite{PhysRevA.38.1005,lidmar2003virus}. 
In the context of stability and buckling,
several interesting points on why the FvK equations are relevant, even in
regimes that they seemingly do not describe, have been made in \cite{MR2525119}.\\
In the applications mentioned above, the
FvK model usually still contains a small parameter $h$ that is
interpreted as the (rescaled) thickness of the sheet, whereas in the Gamma-Limit
from \cite{MR2210909}, the thickness has disappeared from the model. Strictly
speaking, it is  unclear if  plate models that still contain the small
parameter $h$ have a rigorous meaning asymptotically.\\
\\
Theorem \ref{thm:main} identifies the energy scaling of  a single disclination
in the FvK model that includes the small parameter $h$. The size of the disclination is of the
same order as the ``natural'' deflection size in the FvK model, and is
determined by the parameter $\Delta$. \\
\\
To see the relation between $\Ihd$ and $\vk_{h,\Delta}$, let $\ve$ be a small parameter, and set $\tilde \Delta=\ve\Delta$, $\tilde h=\ve h$. Assume
further that $(u,v)\in C^\infty(B_1; \R^2)\times C^\infty(B_1)$, and that the  deformation $y:B_1\to\R^3$ is given by
\[
y(x)=x+\ve^2 u+\ve v e_3\,.
\]
Now we have 
\[
\begin{split}
  Dy=&e_1\otimes e_1+e_2\otimes e_2+\ve^2 Du+\ve e_3\otimes Dv\\
  Dy^TDy=& \id_{2\times 2}+\ve^2 (Du+Du^T+Dv\otimes Dv)+O(\ve^3)\\
Dy^TDy-g_{\tilde \Delta}=& \ve^2\left(Du+Du^T+Dv\otimes Dv+\Delta^2\hat
x^\bot\otimes\hat x^\bot\right)\,,
\end{split}
\]
and hence 
\[
\begin{split}
  I_{\tilde h,\tilde \Delta}(y)= &\int_{B_1}|g_y-g_{\tilde \Delta}|^2+\tilde
  h^2|D^2y|^2\d\L^2\\
  =& \ve^4 \int_{B_1} |2\,\sym Du+Dv\otimes Dv+\Delta^2\hat
x^\bot\otimes\hat x^\bot|^2+h^2|D^2v|^2\d\L^2+O(\ve^5)\,.
\end{split}
\]
Hence, we see that $\vk_{h,\Delta}(u,v)$ is the lowest order term in $\ve$ of $I_{\ve
  h,\ve\Delta}(x\mapsto x+\ve^2 u+\ve v e_3)$.
\\
We remark  that the above expansion in $\ve$ can not be used
to deduce Theorem \ref{thm:main} from Theorem \ref{thm:main1} in a trivial way.
Nevertheless, the proof of Theorem \ref{thm:main} is very similar to the one of Theorem
\ref{thm:main1}.  The role that was played by the control variable 
$\kappa_y(r)=\int_{B_r}\sum_i\det D^2 y_i\d x$ there will be played by 
\begin{equation}
\kvk_v(r):= \int_{B_r}\det D^2 v\d x\label{eq:22}
\end{equation}
here. We recall that $\det D^2 v$ is the \FvK
version of Gauss curvature -- so our control variable is a curvature integral,
and by Gauss' equation may be thought of as the (oriented) volume of balls $B_r$
under the push-forward of the Gauss map.\\
The only significant difference between the two cases is that we will have to
identify the right terms in the membrane energy to make estimates for $\kvk_v$
in the $W^{-1,1}$ norm.\\
\\
We start off with the upper bound:

\begin{lemma}
\label{lem:upperbound2}
We have
  \begin{equation*}
  \inf_{(u,v)\in\A} \vk_{h,\Delta}(u,v)\leq 2\pi \Delta^2\,h^2|\log h|+Ch^2\,.
\end{equation*}

\end{lemma}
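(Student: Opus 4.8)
The plan is to mimic the construction in Lemma \ref{lem:upperbound1}, but now working directly in the \FvK variables $(u,v)$ rather than with the three-dimensional deformation $y$. The ansatz for the out-of-plane deflection is the obvious one: take a smooth cutoff $\eta\in C^\infty([0,\infty))$ with $\eta(t)=0$ for $t\le\tfrac12$, $\eta(t)=1$ for $t\ge1$, $|\eta'|\le 4$, $|\eta''|\le 8$, and set
\[
v(x)=\Delta\,\eta(|x|/h)\,|x|\,.
\]
Away from $B_h$ this is $v(x)=\Delta|x|$, so that $Dv\otimes Dv=\Delta^2\hat x\otimes\hat x$; on $B_h$ one has $|v|\le C h$, $|Dv|\le C$ and $|D^2v|\le C h^{-1}$ from the explicit formulas (the $|x|^{-1}$ in $D^2(\Delta|x|)$ is killed by the support of $\eta'$). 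The bending term then contributes $h^2\int_{B_1\setminus B_h}|D^2(\Delta|x|)|^2\,\d x = h^2\Delta^2\int_{B_1\setminus B_h}|x|^{-2}\,\d x \le 2\pi\Delta^2 h^2|\log h|$, plus a term $\le C h^2$ coming from $B_h$; this is where the leading $2\pi\Delta^2 h^2|\log h|$ comes from.

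The second step is to choose the in-plane displacement $u$ so as to annihilate the membrane term outside $B_h$. Outside $B_h$ we need $2\,\sym Du = -Dv\otimes Dv-\Delta^2\hat x^\bot\otimes\hat x^\bot = -\Delta^2(\hat x\otimes\hat x+\hat x^\bot\otimes\hat x^\bot) = -\Delta^2\,\id_{2\times 2}$, i.e.\ $\sym Du=-\tfrac{\Delta^2}{2}\id_{2\times2}$, which is solved by the radial field $u(x)=-\tfrac{\Delta^2}{2}x$ (or any such field modified near the origin). Concretely I would set $u(x)=-\tfrac{\Delta^2}{2}\,\zeta(|x|/h)\,x$ for a smooth cutoff $\zeta$ equal to $1$ for $t\ge1$ and $0$ near $0$, so that $u\in W^{1,2}(B_1;\R^2)$ and on $B_h$ one has $|Du|\le C$; then the membrane integrand is supported in $B_h$ and bounded there by a constant, contributing $\le C h^2$ after multiplying by $h^2$... no — rather, the membrane term on $B_h$ is bounded by $C\,\L^2(B_h)\le C h^2$ directly. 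One checks $(u,v)\in\A$ (the pair lies in $W^{1,2}(B_1;\R^2)\times W^{2,2}(B_1)$, indeed smooth), and adding the two contributions gives $\vk_{h,\Delta}(u,v)\le 2\pi\Delta^2 h^2|\log h|+C h^2$, which is the claim.

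I do not expect a genuine obstacle here; the only point requiring a little care is the bookkeeping of the constants so that the $|\log h|$-coefficient comes out exactly $2\pi\Delta^2$ and everything else is absorbed into the additive $C h^2$. In particular one must make sure that the correction to $u$ near the origin, and the $O(h^2)$ boundary layer $B_h$ where neither $Dv\otimes Dv$ nor $\sym Du$ matches $-\Delta^2\id/2$, contribute only to the $C h^2$ error and not to the logarithm; since the bad region has area $\sim h^2$ and all relevant quantities are $O(1)$ there (for $u$, $v$, $Dv$) or $O(h^{-1})$ there in a way that is compensated by the $h^2$ prefactor (for $D^2v$), this is routine.
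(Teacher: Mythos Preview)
Your proposal is correct and follows essentially the same construction as the paper: the paper uses exactly $v^h(x)=\Delta\,\eta(|x|/h)\,|x|$ and $u^h(x)=-\tfrac{\Delta^2}{2}\,\eta(|x|/h)\,x$ (with the same cutoff $\eta$ for both), then checks that the membrane integrand vanishes on $B_1\setminus B_h$ and is bounded on $B_h$, while the bending term gives $2\pi\Delta^2 h^2|\log h|+Ch^2$. Your use of a possibly different cutoff $\zeta$ for $u$ is an inessential variation.
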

\begin{proof}
  Let  $\eta\in C^\infty([0,\infty))$ with $\eta(x)=0$ for $x\leq \frac12$, $\eta(x)=1$ for $x\geq
  1$ and  $|\eta'|\leq 4$, $|\eta''|\leq 8$. 
  Now we define $u^h:B_1\to\R^2$, $v^h:B_1\to\R$
  by 
\[
\begin{split}
  u^h(x):=&-\frac{\Delta^2}{2} \,\eta(|x|/h) x\\
  v^h(r,\varphi):=&\Delta \,\eta(|x|/h) |x|\,.
\end{split}
\]
The membrane energy density $\left|2\,\sym Du^h+Dv^h\otimes Dv^h+\Delta^2\hat
x^\bot\otimes \hat x^\bot\right|^2$ vanishes on
$B_1\setminus B_h$, while it is bounded by a constant $C$ that is independent of
$h$ on $B_h$. Hence
\[
E_{\mathrm{membrane}}:=\int_{B_1}\left|2\,\sym Du^h+Dv^h\otimes Dv^h+\Delta^2\hat
x^\bot\otimes \hat x^\bot\right|^2\d x\leq
Ch^2\,.
\]
Furthermore,
\[
D^2
v^h=\Delta\left(\frac{|x|}{h^2}\eta''(|x|/h)+\frac{2}{h}\eta'(|x|/h)\right)\hat x\otimes
\hat x+
\frac{\Delta}{|x|}\left(\frac{|x|}{h}\eta'(|x|/h)+\eta(|x|/h)\right)\hat x^\bot\otimes
\hat x^\bot\,.
\]
Hence
\[
\begin{split}
  |D^2 v^h|^2\leq &Ch^{-2} \quad\text{ for }|x|\leq Ch^{-2}\,, \\
|D^2 v^h|^2=&\Delta^2 |x|^{-2}\quad
  \text{ for }|x|> h,
\end{split}
\]
and 
\[
\begin{split}
  h^{-2}E_{\mathrm{bending}}:=\int_{B_1}|D^2 v^h|^2\d x\leq &\int_{B_h}Ch^{-2}\d x +2\pi
  \int_h^1\Delta^2\frac{\d r}{r}\\
  \leq & 2\pi\Delta^2|\log h|+C\,.
\end{split}
\]
This proves the lemma. 
\end{proof}

% To compute a lower bound for $\L^2(Dv(B_r))$, we use the fact that the
% determinant $\det D^2v$ is a divergence.
We come to the interpolation between metric and curvature that will yield an
$L^1$ estimate for $\kvk_v$.

\begin{proposition}
\label{prop:interpolvk}
  Let $(u,v)\in C^2(B_1;\R^3)$  with 
$\vk_h(y)\leq 2\pi \Delta^2 h^2(|\log h|+C)$. Then
\begin{equation}
\|\kvk_v-\pi\Delta^2\|_{L^1(h,R)}\leq C h^{1/2}R^{1/2}|\log h|\,.\label{eq:7}
\end{equation}
\end{proposition}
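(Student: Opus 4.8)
The strategy mirrors the proof of Proposition \ref{prop:interpol1}, with $\kvk_v$ replacing $\kappa_y$. First I would recall that, by the very weak form of the Hessian \eqref{eq:23} applied to $w=v$,
\[
\det D^2v=(v_{,1}v_{,2})_{,12}-\tfrac12(|v_{,1}|^2)_{,22}-\tfrac12(|v_{,2}|^2)_{,11}\,,
\]
and I want to relate the right-hand side to the membrane term of $\vk_{h,\Delta}$. Writing $M:=2\,\sym Du+Dv\otimes Dv+\Delta^2\,\hat x^\bot\otimes\hat x^\bot$ for the membrane strain, the key algebraic observation is that the combination of second derivatives appearing in $\det D^2v$ can be rewritten, modulo a term that is a second derivative of $\sym Du$ and hence vanishes against the test functions we will use (it is a pure divergence of a divergence), as a linear combination of second derivatives of the entries $M_{11},M_{12},M_{22}$ \emph{plus} the known second derivatives of $\Delta^2\,\hat x^\bot\otimes\hat x^\bot$. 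The latter is exactly $-\Delta^2 2\pi\delta_0$ in the distributional sense (cf.\ \eqref{eq:2} and Lemma \ref{lem:hatydist}), which is what produces the constant $\pi\Delta^2$ in \eqref{eq:7}. So I expect $\det D^2v - (\text{something})\cdot\delta_0$ to be, as a distribution, a sum of terms $f_{,kl}$ with $f\in\{M_{11},M_{12},M_{22}\}$, up to the harmless term $\det D^2(\text{linear part of }u)$, which vanishes.

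\textbf{Main steps.} Fix $R\in[2h,1-5h]$ (with a good radius $h_0\in[5h,6h]$ on which $\int_{\partial B_{h_0}}|M|^2\d\H^1\le Ch|\log h|$, chosen by averaging, as in Proposition \ref{prop:interpol1}; note the energy bound $\vk_{h,\Delta}(u,v)\le 2\pi\Delta^2h^2(|\log h|+C)$ gives $\int_{B_1}|M|^2\le Ch^2|\log h|$ and $\int_{B_1}|D^2v|^2\le C|\log h|$). Set $F(r)=\int_{h_0}^r(\kvk_v(\rho)-\pi\Delta^2)\d\rho$ and express $F(r)$ as $\int_{h_0}^r\d\rho\int_{B_\rho}\big[M_{12,12}-\tfrac12 M_{11,22}-\tfrac12 M_{22,11}\big]\d\L^2$, using the distributional identity above together with the fact that the $\hat x^\bot\otimes\hat x^\bot$ contribution exactly cancels the $-\pi\Delta^2$ when integrated over $B_\rho$ (for $\rho\ge h_0>0$ the Dirac mass at the origin is not seen, so the cancellation is of the smooth parts — I'll double-check the sign and whether the subtracted constant is $\pi\Delta^2$ as claimed). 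Then I would run the same repeated Gauss-theorem computation as in \eqref{eq:44}, giving for $f\in\{M_{11},M_{12},M_{22}\}$ and $k,l\in\{1,2\}$
\[
\int_{h_0}^r\d\rho\int_{B_\rho}f_{,kl}=\int_{\partial(B_r\setminus B_{h_0})}\frac{x_kx_l}{|x|^2}f\,\d\H^1-\int_{B_r\setminus B_{h_0}}\Big(\frac{x_l}{|x|}\Big)_{,k}f\,\d x\,,
\]
and estimate $\|F\|_{L^1(h_0,R)}$ by Cauchy–Schwarz exactly as there, producing $\|F\|_{L^1(h_0,R)}\le ChR|\log h|^{1/2}$. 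For the second derivative, $\|F''\|_{L^1(h_0,R)}=\int_{B_R\setminus B_{h_0}}|\det D^2v|\le\int_{B_1}|D^2v|^2\le C|\log h|$. The Gagliardo–Nirenberg interpolation inequality $\|F'\|_{L^1}\le C\|F\|_{L^1}^{1/2}\|F''\|_{L^1}^{1/2}$ (Theorem 7.28 in \cite{gilbarg2001elliptic}) then yields $\|\kvk_v-\pi\Delta^2\|_{L^1(h_0,R)}\le C h^{1/2}R^{1/2}|\log h|^{3/4}$ — but since $|\log h|^{3/4}\le|\log h|$ this is even slightly stronger than \eqref{eq:7}; passing from $(h_0,R)$ to $(h,R)$ costs only the negligible interval $(h,h_0)\subset(h,6h)$, on which $\kvk_v$ is controlled by $\int_{B_{6h}}|D^2v|^2$ against $\d r$, absorbed into the error.

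\textbf{Main obstacle.} The one genuinely new point compared with Proposition \ref{prop:interpol1} is the algebraic bookkeeping in the first step: the very weak Hessian of $v$ naturally involves $(|v_{,1}|^2)$ and $(|v_{,2}|^2)$ and $v_{,1}v_{,2}$, whereas the membrane strain is $2\,\sym Du + Dv\otimes Dv+\Delta^2\hat x^\bot\otimes\hat x^\bot$, so one must check that the contribution of $\sym Du$ to the relevant combination of second derivatives is itself of the form $\sum_{k,l}(\sym Du)_{\cdot\cdot,kl}$ and hence, being a distribution of the form handled by the Gauss-theorem estimates with $f$ a component of $\sym Du$ — which is also controlled in $L^2$ by the membrane energy — can be treated on the same footing (or, more simply, that the only second-derivative combination of $\sym Du$ that appears is $u_{1,122}+u_{2,112}$-type, which can be incorporated). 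In other words, one wants $M_{12,12}-\tfrac12 M_{11,22}-\tfrac12 M_{22,11}=\det D^2v + \Delta^2\cdot(\text{curvature of }\hat x^\bot\otimes\hat x^\bot) + (\text{pure second derivatives of }\sym Du)$, and then estimate \emph{all} of these second-derivative terms with $f$ ranging over components of $M$ and of $\sym Du$ — all $L^2$-bounded by the membrane energy. Once that identity is pinned down, everything else is a verbatim repetition of the argument in Proposition \ref{prop:interpol1} with the single exponent change $|\log h|^{3/4}\mapsto|\log h|$.
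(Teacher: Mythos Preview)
Your approach is correct, but it differs from the paper's own proof of this proposition. The paper works in polar coordinates: it sets
\[
a(s)=\int_h^s\!\d r\int_0^{2\pi}\!\d\varphi\,(2u_{r,r}+v_{,r}^2),\qquad
b(r)=\int_0^{2\pi}\!\d\varphi\,\Big(2u_r+\frac{v_{,\varphi}^2}{r}+\Delta^2 r\Big),
\]
defines $F=\tfrac12(a-b)$, and checks by direct computation that $F'(r)=\kvk_v(r)-\pi\Delta^2$. The point of this construction is that $a$ and $b$ are built from the $rr$- and $\varphi\varphi$-components of the polar membrane strain, so their sizes are read off directly from the membrane energy via Cauchy--Schwarz (with a weight $r^{-1}$ for $a$, producing an extra $|\log h|^{1/2}$ and hence the final exponent $|\log h|$). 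Your route instead transplants the Cartesian argument of Proposition~\ref{prop:interpol1} verbatim, with $M$ playing the role of $g_y-g_\Delta$; this is equally valid and, as you observe, even yields the marginally sharper exponent $|\log h|^{3/4}$.

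One correction to your ``main obstacle'': the contribution of $\sym Du$ to $M_{12,12}-\tfrac12 M_{11,22}-\tfrac12 M_{22,11}$ is not merely estimable --- it vanishes identically for $u\in C^2$, since
\[
(u_{1,2}+u_{2,1})_{,12}-(u_{1,1})_{,22}-(u_{2,2})_{,11}=0
\]
by equality of mixed partials (this is the linearized Saint--Venant compatibility relation). So the identity you want is exactly
\[
M_{12,12}-\tfrac12 M_{11,22}-\tfrac12 M_{22,11}=\det D^2v-\pi\Delta^2\,\delta_0
\]
as distributions, and only the components of $M$ enter the Gauss-theorem estimates. This matters, because your claim that the components of $\sym Du$ are ``$L^2$-bounded by the membrane energy'' is false: the membrane energy controls $M$, not $\sym Du$ separately. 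Fortunately you never need that control.
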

\begin{proof}
In the present proof, we will use polar coordinates $r,\varphi$ on $B_1$. The
unit vectors in $r$ and $\varphi$ direction are denoted by $e_r=\hat x$ and
$e_\varphi=\hat x^\bot$ respectively. The
vector field $u$ will be written as $u=u_re_r+u_\varphi e_\varphi$.
The membrane energy in these coordinates is given by
\[
\begin{split}
  \int_0^1r\d r\int_0^{2\pi}\d\varphi & \Bigg(\left|2u_{r,r}+v_{,r}^2\right|^2
  +\left|2r^{-1}(u_{,\varphi}^\varphi+u_r)+ (r^{-1}v_{,\varphi})^2+\Delta^2\right|^2\\
  &+2\left|u_{\varphi,r}+r^{-1}\left(u_{r,\varphi}-u_\varphi+v_{,r}v_{,\varphi}\right)\right|^2\Bigg)\,.
\end{split}
\]
  For $r>0$, we have
  \begin{equation}
    \begin{split}
      \int_{B_r}\det D^2 v\d x
      =\frac12 \int_{\partial B_r} v_{,1}\d v_{,2}-v_{,2}\d v_{,1}\,.
    \end{split}\label{eq:4}
  \end{equation}

  With
  $v_{,1}=v_{,r}\cos\varphi-r^{-1}v_{,\varphi}\sin\varphi$ and
  $v_{,2}=v_{,r}\sin\varphi+r^{-1}v_{,\varphi}\cos\varphi$, we
  compute
  
  \begin{equation}
  \begin{split}
    v_{,1}\d v_{,2}-v_{,2}\d v_{,1}=&\alpha \d r+\left(v_{,r}^2+\left(\frac{v_{,\varphi}}{r}\right)^2 + \frac{v_{,r}v_{,\varphi\varphi}}{r}-\frac{v_{,\varphi}v_{,r\varphi}}{r}\right)\d\varphi\\
    =& \alpha    \d r+\left(\left(v_{,r}\right)^2-\left(\frac{v_{,\varphi}^2}{r}\right)_{,r}+\frac{\left(v_{,r}v_{,\varphi}\right)_{,\varphi}}{r}\right)\d\varphi\,,
  \end{split}\label{eq:9}
  \end{equation}
where $\alpha$ is some function of $r,\varphi$ that will be irrelevant for
our purpose.
  For $h\leq s,r\leq 1$, let
  \[
  \begin{split}
    a(s):=&\int_h^s \d r\int_0^{2\pi}\d\varphi \left(2u_{r,r}+ v_{,r}^2\right)\\
    b(r):=&\int_0^{2\pi}\d\varphi \left(2u_r+
      \frac{v_{,\varphi}^2}{r}+\Delta^2 r\right)
  \end{split}
  \]
  Now we set $F(s):=\frac12 \left(a(s)-b(s)\right)$, and compute
  \[
  F'(r)=\frac12\int\d \varphi\left( v_{,r}^2-
    \left(\frac{v_{,\varphi}^2}{r}\right)_{,r}-\Delta^2\right)\,.
  \]
Comparing this last expression with \eqref{eq:4} and \eqref{eq:9}, we have

\begin{equation}
  \begin{split}
    F'(r)=&- \pi\Delta^2+\int_{B_r}\det D^2 v\d x \\
  F''(r)=&\int r\d \varphi \det D^2v\,.
\end{split}\label{eq:17}
\end{equation}

  % We get rid of the third order derivatives by using the very weak form of the
  % Hessian,
  % \[
  % (v_{,\varphi}^2)_{,rr}=2(v_{,r}v_{,\varphi})_{,\varphir}-(v_{,r})_{,\varphi\varphi}-2(v_{,rr}v_{,\varphi\varphi}-v_{,r\varphi}^2)\,.
  % \]
  % Inserting this last equation in the one above, we get
  % \[
  % \begin{split}
  %   F''(r)=&\int\d \varphi\left( v_{,r}v_{,rr}
  %     +\frac{v_{,rr}v_{,\varphi\varphi}}{r}-\frac{v_{,r\varphi}^2}{r}
  %     +\frac{2v_{,r\varphi}v_{,\varphi}}{r^2}-\frac{v_{,\varphi}^2}{r^3}\right)\\
  %   =&\int r\d\varphi \det D^2 v\,.
  % \end{split}
  % \]
  This gives us the needed bound on $\|F''\|_{L^1}$:
  \begin{equation}
    \label{eq:6}
    \begin{split}
      \|F''\|_{L^1(h,1)}\leq &\int_0^1r\d r\int_0^{2\pi}\d\varphi |\det
      D^2
      v|\\
      \leq & \int_{B_1}|D^2 v|\d x\\
      \leq & C|\log h|\,.
    \end{split}
  \end{equation}

% Next, we have
%   \[
%   \begin{split}
%     D^2v=& v_{,rr}e_r\otimes e_r+\left(\frac{v_{,r\varphi}}{r}-\frac{v_{,\varphi}}{r^2}\right)(e_r\otimes e_\varphi+e_\varphi\otimes e_r)\\
%     &+\left(\frac{v_{,\varphi\varphi}}{r^2}+\frac{v_{,r}}{r}\right)e_\varphi\otimes
%     e_\varphi\,.
%   \end{split}
%   \]

 The $L^1$ bound on
  $F$ works as follows,
  \[
  \begin{split}
    \int_h^R |b(r)|\d r=&\int_h^R \d r\left|\int\d\varphi \left(2u_r+ \frac{v_{,\varphi}^2}{r}+\Delta^2 r\right)\right|\\
    \leq & \left(\int_h^1 r\d r\d\varphi
      \left|2r^{-1}(u_{\varphi,\varphi}+u_r)+ (r^{-1}v_{,\varphi})^2+\Delta^2\right|^2\right)^{1/2}\left(\int_h^R r\d r\right)^{1/2}\\
    \leq& C h|\log h|^{1/2}R\,,
  \end{split}
  \]
  and
  \[
  \begin{split}
    |a(s)|=\left|\int_h^s\d r \int\d\varphi \left(2u_{r,r}+
        v_{,r}^2\right)\right|
    \leq &\left( \int_0^1r\d r \int\d\varphi \left|2u_{r,r}+ v_{,r}^2\right|^2\right)^{1/2}\left(\int_h^1 \frac{\d r}{r}\right)^{1/2}\\
    \leq & C h|\log h|
  \end{split}
  \]
  from which we get
  \[
  \int_h^R |a(r)|\d r\leq C hR|\log h|\,.
  \]
  In conclusion,
  
  \begin{equation}
  \int_h^R |F(r)|\d r \leq ChR|\log h|\,.\label{eq:5}
  \end{equation}

  Using the standard interpolation inequality
\[
\|F'\|_{L^1(h,R)}\leq C \|F\|_{L^1(h,R)}^{1/2}\|F''\|_{L^1(h,R)}^{1/2}\,,
\]
and \eqref{eq:17}, we obtain the claim of the proposition.
\end{proof}

The proof of Theorem \ref{thm:main} now almost works exactly in the same way as
the proof of Theorem \ref{thm:main1}.

\begin{proof}[Proof of Theorem \ref{thm:main}]
The existence of a minimum follows from coercivity and lower
semi-continuity of the functional \eqref{eq:27}. The upper bound follows from
Lemma \ref{lem:upperbound2}. Assume that $(u,v)\in W^{1,2}(B_1;\R^3)\times
W^{2,2}(B_1;\R^3)$ with $\vk_{h,\Delta}\leq 2\pi\Delta^2h^2(|\log h|+C)$. By density of $C^2(B_1;\R^3)$ in $W^{1,2}(B_1;\R^3)\times
W^{2,2}(B_1;\R^3)$, we may assume $(u,v)\in C^2(B_1;\R^3)$. By Proposition
\ref{prop:interpolvk}, we may assume that the assumptions of Proposition
\ref{prop:diadicannuli} hold with $\alpha=1$ and
\[
y_1=y_2=0,\quad y_3=v\,.
\]
With this choice of $y_i$, we have $\kappa_y=\kvk_v$ and 
\[
\sum_{i=1}^3\int_{B_1}|D^2y_i|^2\d x=\int_{B_1}|D^2v|^2\d x\,.
\]
Hence, the lower bound follows from the statement of  Proposition
\ref{prop:diadicannuli}. This completes the proof of the theorem.
\end{proof}

\bibliographystyle{plain}
\bibliography{regular}

\end{document}